\chardef\bslash=`\\ 
\theoremstyle{plain} 
\newtheorem{thm}{Theorem}[section]\crefname{thm}{theorem}{theorems}
\newtheorem{thmVanishing}[thm]{Vanishing Theorem}\crefname{thmVanishing}{Vanishing Theorem}{}
\newtheorem{thmStructure}[thm]{Structure Theorem}\crefname{thmStructure}{Structure Theorem}{}
\newtheorem{thmPanin}[thm]{Panin's Theorem}\crefname{thmPanin}{Panin's Theorem}{}
\newtheorem{thmSteinberg}[thm]{Steinberg's Theorem}\crefname{thmSteinberg}{Steinberg's Theorem}{}
\newtheorem{thmSerre}[thm]{Serre's Theorem}\crefname{thmSerre}{Serre's Theorem}{}
\newtheorem*{thm*}{Theorem}
\newtheorem{cor}[thm]{Corollary}\crefname{cor}{corollary}{corollaries}
\newtheorem*{cor*}{Corollary}
\newtheorem{lem}[thm]{Lemma}\crefname{lem}{lemma}{lemmas}
\newtheorem*{obs*}{Observation}
\newtheorem*{lem*}{Lemma}
\newtheorem{prop}[thm]{Proposition}\crefname{prop}{proposition}{propositions}
\theoremstyle{definition}
\newtheorem{defn}[thm]{Definition}\crefname{defn}{definition}{definitions}
\newtheorem*{defn*}{Definition}
\theoremstyle{remark}
\crefname{example}{example}{examples}
\newtheorem*{example*}{Example}
\newtheorem{rem}[thm]{Remark}\crefname{rem}{remark}{remarks}
\newtheorem*{rem*}{Remark}
\crefname{equation}{eq.}{eq.}
\newcommand*{\Iff}{\Leftrightarrow}
\newcommand*{\N}{{\mathbb{N}}}
\newcommand*{\Z}{{\mathbb{Z}}}
\newcommand*{\Q}{{\mathbb{Q}}}
\renewcommand*{\bar}[1]{\smash{\overline{#1}}}
\newcommand{\factor}[2]{{\ensuremath{#1}/\ensuremath{#2}}}
\newcommand*{\A}{{\mathbb{A}}}
\renewcommand*{\P}{{\mathbb{P}}}
\DeclareMathOperator{\id}{id}
\DeclareMathOperator{\rank}{rank}
\renewcommand*{\rank}{\mathrm{rk}}
\renewcommand*{\mid}{\;\middle|\;}
\providecommand{\abs}[1]{\lvert#1\rvert}
\newcommand*{\cat}[1]{\mathcal{#1}}
\DeclareMathOperator{\cone}{cone}
\DeclareMathOperator{\im}{im}
\DeclareMathOperator{\Pic}{Pic}
\newcommand*{\twistspace}{\mathfrak L}
\newcommand*{\mm}[1]{\left(\begin{smallmatrix}#1\end{smallmatrix}\right)}
\renewcommand*{\matrix}[1]{\begin{pmatrix}#1\end{pmatrix}}
\newcolumntype{M}{>{$}c<{$}}
\newcommand*{\Grassmannian}[2]{\mathrm{Gr}_{#1,#2}}
\DeclareMathOperator{\Kgroup}{K}
\newcommand*{\K}{\ensuremath{\Kgroup}}
\DeclareMathOperator{\Wgroup}{W}
\newcommand*{\W}{\Wgroup}
\DeclareMathOperator{\GWgroup}{GW}
\newcommand*{\GW}{\GWgroup}
\newcommand*{\w}{\mathit{w}}
\newcommand*{\total}{\bullet}
\newcommand*{\lb}[1]{{\mathcal{#1}}}
\newcommand*{\vb}[1]{{\mathcal{#1}}}
\newcommand*{\dual}{*}
\renewcommand*{\star}{\ensuremath{\ast}}
\newcommand*{\ddual}{\circ}
\newcommand*{\ideal}[1]{\mathfrak{#1}}
\newcommand*{\Rep}{\mathrm{Rep}}
\renewcommand*{\vec}[1]{\mathbf{#1}}
\newcommand*{\mat}[1]{\mathrm{#1}}
\newcommand*{\WLattice}[1][]{{\Lambda_{#1}}}
\newcommand*{\Weyl}[1][]{{W_{#1}}}
\newcommand*{\FWC}[1][]{{\mathcal{W}_{#1}}}
\newcommand*{\simpleR}{\Sigma}
\newcommand*{\thetaR}{\Theta}
\newcommand*{\nthetaR}{{\simpleR\!-\!\thetaR}}
\newcommand*{\teta}{\vartheta}
\newcommand*{\RSystem}[1][]{{\mathcal R}_{#1}}
\newcommand*{\pairing}[2]{{\langle{#1,#2}\rangle}}
\newcommand*{\prii}{{\pi^{\scriptscriptstyle\perp}}}
\newcolumntype{M}{>{$}c<{$}}
\newcommand*{\deffont}[1]{\textbf{#1}}
\newcommand{\encouragepagebreak}{\needspace{0.2\textheight}}
\newcommand*{\GrothendieckWitt}{Grothen\-dieck-Witt }
\newcommand*{\ie}{\mbox{i.\thinspace{}e.\ }}
\newcommand*{\eg}{\mbox{e.\thinspace{}g.\ }}
\newcommand*{\cf}{\mbox{c.\thinspace{}f.\ }}
\begin{document}
\title{Twisted Witt Groups of Flag Varieties}
\author{Marcus Zibrowius%
  \thanks{Bergische Universität Wuppertal,
    Gaußstraße 20,
    42119 Wuppertal,
    Germany}
}
\date{\today}
\maketitle
\begin{abstract}
  Calmès and Fasel have shown that the twisted Witt groups of split flag varieties vanish in a large number of cases.
  For flag varieties over algebraically closed fields, we sharpen their result to an if-and-only-if statement.
  In particular, we show that the twisted Witt groups vanish in many previously unknown cases.
  In the non-zero cases,  we find that the twisted total Witt group forms a free module of rank one over the untwisted total Witt group, up to a difference in grading.

  Our proof relies on an identification of the Witt groups of flag varieties with the Tate cohomology groups of their K-groups, whereby the verification of all assertions is eventually reduced to the computation of the (twisted) Tate cohomology of the representation ring of a parabolic subgroup.
\end{abstract}
\tableofcontents
\thispagestyle{empty}
\newpage

\setlength{\parindent}{0pt}
\addtolength{\parskip}{3pt}
\addtolength{\topsep}{3pt}

\encouragepagebreak
\section*{Introduction}
Let \(G\) be a split semisimple algebraic group with a parabolic subgroup \(P\).
Our object of study is the Witt cohomology of the flag variety \(G/P\).
Specifically, we address the following question:
\begin{quote}
  Given the untwisted Witt groups \(\W^i(G/P)\), what can we say about the Witt groups \(\W^i(G/P,\lb L)\) with twists in a line bundle \(\lb L\) over \(G/P\)?
\end{quote}
A result of Calmès and Fasel shows that in many cases there is a short answer that does not even depend on the untwisted Witt groups:  the twisted Witt groups are simply zero \cite{CalmesFasel}.
The cases they deal with can be described as follows.
Suppose \(P\) is the standard parabolic subgroup corresponding to a subset \(\thetaR\) of a set of simple roots \(\simpleR\) of \(G\).
Consider the Dynkin diagram of \(G/P\), \ie the Dynkin diagram of \(G\) in which the roots contained in \(\thetaR\) and \(\nthetaR\) are indicated using different colours, say black for the roots in \(\thetaR\) and white for the remaining roots\footnote{This colouring scheme follows \cite{CalmesFasel}.  It is inverse to the colouring scheme used in \cite{Me:WCCV}*{Table~2} and \S~23.3 of \cite{FultonHarris}.
}.
For example, the Dynkin diagram of the Grassmannian of three-planes in eight-dimensional space would be:
\[\ifdraft{}{
    \begin{tikzpicture}[dynPicture]
      {[dynDiagram]
        {[dynTheta]
          \dynR{}
          \dynR{}
        }
        \dynR{}
        {[dynTheta]
          \dynR{}
          \dynR{}
          \dynR{}
          \dynR{}
        }
      }
    \end{tikzpicture}
}\]
Then there is a natural one-to-one correspondence between the white nodes and a basis of the Picard group modulo squares \(\Pic(G/P)/2\).
Let \(\mathfrak N\subset \Pic(G/P)/2\) be the linear subspace generated by the basis elements corresponding to those white nodes that have a black neighbour, \ie that are connected to at least one black node by an edge of the Dynkin diagram.
Calmès and Fasel's result can be paraphrased as follows:
\begin{quote}
  If the class of \(\lb L\) in \(\Pic(G/P)/2\) lies outside the subspace \(\mathfrak N\), then \(\W^i(\Pic(G/P),\lb L)\) vanishes for all \(i\).
\end{quote}
For example, for a full flag variety \(G/B\) with \(B\subset G\) a Borel subgroup, the Dynkin diagram of \(G/B\) has white nodes only, the subspace \(\mathfrak N\) is trivial and the result therefore says that \emph{all} twisted Witt groups vanish.  This has been used by Yagita to identify the untwisted Witt ring \(\W^\total(G/B)\) with the Witt ring of \(G\) \cite{Yagita:W-of-G}*{Theorem~1.1}.
However, although Calmès and Fasel's result covers a great many cases, it is not as sharp as possible. For example, as the authors themselves note, it does not predict the vanishing of all twisted Witt groups of the Grassmannian above. (They are known to vanish by \cite{BalmerCalmes}.)

In this paper, we sharpen Calmès and Fasel's criterion under the assumption that the ground field is algebraically closed as follows:  we describe generators of a smaller linear subspace \(\twistspace\subset\mathfrak N\) such that the twisted total Witt group \(\W^\total(G/P,\lb L) := \bigoplus\W^i(G/P,\lb L)\) is non-zero if and only if the reduction of \(\lb L\) modulo two is contained in \(\twistspace\).  In addition to the Grassmannian above, here are the Dynkin diagrams of some further examples for which \(\twistspace\), unlike \(\mathfrak N\), turns out to be trivial:
\[
\begin{aligned}
  \begin{tikzpicture}[dynPicture]
    {[dynDiagram]
      \dynR{[theta] }
      \dynR{[theta] (v)}
      \dynR{[above right=of v]}
      \dynR[with v]{[below right=of v]}
    }
  \end{tikzpicture}
  \hspace{2cm}
  \begin{tikzpicture}[dynPicture]
    {[dynDiagram]
      \dynR{[theta] }
      \dynR{[theta] (v)}
      \dynR{[above right=of v][theta]}
      \dynR[with v]{[below right=of v]}
    }
  \end{tikzpicture}
\end{aligned}
\hspace{2cm}
\begin{aligned}
  \begin{tikzpicture}[dynPicture]
    {[dynDiagram]
      \dynR{[empty] (a)}
    }
    {[dynDiagram]
      \dynR{[theta][right=of a]}
      \dynR{[theta] }
      \dynR[by right arrow]{ }
      \dynR{ }
    }
    {[dynDiagram]
      \dynR{[below=of a]}
      \dynR[by fadeout line]{[empty] }
      \dynR[by fadein line]{ }
      {[dynTheta]
        \dynR{}
        \dynR[by fadeout line]{[empty] }
        \dynR[by fadein line]{ }
        \dynR[by left arrow]{ }
      }
    }
  \end{tikzpicture}
\end{aligned}
\]
We also  have the following structural result concerning the non-zero cases:
\begin{thm*}
  If one of the twisted groups \(\W^i(G/P,\lb L)\) is non-zero, then there exists an element \(\zeta\in\W^0(G/P,\lb L) \oplus \W^2(G/P,\lb L)\) such that multiplication with \(\zeta\) induces an isomorphism
  \[
  \W^\total\left(\factor{G}{P}\right) \xrightarrow{\cong} \W^\total\left(\factor{G}{P},\lb L\right).
  \]
\end{thm*}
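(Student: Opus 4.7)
My plan is to leverage the Tate-cohomology reformulation advertised in the abstract. After this translation, $\W^\total(G/P, \lb L) \cong \hat H^*(\Z/2, \K(G/P); \lb L)$ where the $\Z/2$-action is the duality involution modified by a twist encoding $\lb L$, and $\W^\total(G/P) \cong \hat H^*(\Z/2, \K(G/P))$; the Witt-ring/module structure becomes cup product of Tate cohomology classes. Pittie's/Steinberg's theorem further identifies $\K(G/P) \cong \Rep(P)$ as a ring-with-duality, so the task becomes: assuming some $\hat H^i(\Z/2, \Rep(P); \lb L)$ is non-zero, produce a class $\zeta$ in even Tate degree such that cup product with $\zeta$ induces an isomorphism onto the whole twisted Tate cohomology.

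To construct $\zeta$, I would describe $\Rep(P)$ explicitly as the integral group ring $\Z[\hat P]$ of the character lattice, on which duality acts by $\chi \mapsto \chi^{-1}$; the twist by $\lb L$ modifies this into $\chi \mapsto \chi_{\lb L}\chi^{-1}$ for the character $\chi_{\lb L}$ corresponding to $\lb L$. The non-vanishing hypothesis should amount to the existence of an ``invariant square root'' of $\chi_{\lb L}$ inside a suitable quotient of $\Z[\hat P]$, i.e., an element with the same class as a formal $\chi_{\lb L}^{1/2}$. This is the natural candidate for $\zeta$ in degree $0$, and its pairing with the standard $\Z/2$ Tate periodicity element yields the degree-$2$ piece. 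Verifying that cup product with $\zeta$ is an isomorphism of graded $\W^\total(G/P)$-modules then reduces to checking that $\zeta$ represents a unit in Tate cohomology; the inverse is furnished by the inverse character, so the multiplication map has a two-sided inverse, proving freeness of rank one.

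The main obstacle I anticipate is forcing $\zeta$ to lie in \emph{even} Tate degree, i.e.\ in $\W^0 \oplus \W^2$ rather than spilling into $\W^1 \oplus \W^3$. This is where the algebraically-closed hypothesis should enter essentially: over a closed field the characters of $P$ are rich enough that an integral square-root class can be extracted cleanly, whereas over a general base arithmetic obstructions could push classes into odd Tate degree. A secondary challenge is the bookkeeping across the four components of $\W^\total$, showing that a single element of $\W^0 \oplus \W^2$ suffices rather than separate generators in each grading; I expect this to fall out once the explicit form of $\zeta$ on $\Rep(P)$ is in hand, together with the periodicity that relates $\W^{i}$ to $\W^{i+2}(-,\lb L)$ via multiplication by the even-degree half of $\zeta$.
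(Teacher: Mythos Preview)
Your overall architecture---reduce to Tate cohomology, then work with representation rings---matches the paper's, but several of the concrete steps you sketch do not go through as stated.

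First, Panin's theorem does not identify \(\K_0(G/P)\) with \(\Rep(P)\); it identifies it with the quotient \(\Rep(P)/\ideal a\), where \(\ideal a\) is generated by restrictions of rank-zero classes from \(\Rep(G)\). The paper therefore proves the Tate-cohomology analogue first for \(\Rep(P)\) itself and then has a separate argument (\Cref{tate-of-quotient}) to descend to the quotient, using that the generators of \(\ideal a\) form a regular sequence and that \(\Rep(P)\) is free over \(\Rep(G)\) by Steinberg. You have no step corresponding to this descent.

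Second, \(\Rep(P)\) is not the group ring \(\Z[\hat P]\) of the character lattice of \(P\). It is \(\Z[X^*]^{\Weyl[\thetaR]}\), which the paper writes as a tensor product of a Laurent polynomial ring in the characters \(x_\beta\) (\(\beta\in\nthetaR\)) and an honest polynomial ring in generators \(w_\teta\) (\(\teta\in\thetaR\)). The duality sends \(w_\teta\) to \(w_{\teta^\ddual}\cdot x^{\vec m^\teta}\), not simply to an inverse. Consequently your proposed \(\zeta\)---a character with inverse given by the inverse character---does not work: the actual \(\zeta\) constructed in the paper is a monomial \(\mu^{\vec j_0}x^{\vec k_0}\) involving the non-invertible polynomial generators, and it is certainly not a unit in the ring. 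The reason multiplication by \(\zeta\) is an isomorphism on Tate cohomology is not that \(\zeta\) is invertible, but that the twisted involution permutes a monomial \(\Z\)-basis of \(\Rep(P)\), so \(h^-\) vanishes and \(h^+\) is free on the fixed monomials; multiplication by \(\zeta\) then implements a bijection between the set of fixed monomials for the untwisted involution and those for the twisted one, which comes down to comparing solution sets of an affine linear system over \(\Z/2\).

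Finally, your diagnosis of where the algebraically-closed hypothesis enters is off. All the Tate-cohomology computations, including the construction of \(\zeta\) in even degree, hold over an arbitrary field; the hypothesis is used only in the comparison theorem \(\W^\total(X)\cong h^\total(X)\) for smooth cellular \(X\), whose proof goes by induction over cells and needs \(\GW^0(k)\cong\Z\) at the base case.
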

The element \(\zeta\) is not always homogeneous, so this isomorphism does not necessarily preserve the \(\Z/4\)-grading of the total Witt group.  It only preserves the \(\Z/2\)-grading consisting of an even part \(\W^0\oplus\W^2\) and an odd part \(\W^1\oplus\W^3\).

Our proof exploits the relationship between Witt groups and the Tate cohomology of K-groups advocated in \cite{Me:KOFF}.
Given a variety \(X\), let \(\star\) denote the involution on its algebraic K-group \(\K_0(X)\) induced by the duality on vector bundles, and let
\begin{align*}
   h^+(X) &:=\frac{\ker(\id-\;\star)}{\im(\id+\;\star)}&
   h^-(X) &:=\frac{\ker(\id+\;\star)}{\im(\id-\;\star)}
\end{align*}
denote the Tate cohomology of \((\K_0(X),\star)\).  We show:
\begin{thm*}
  For any smooth cellular variety over an algebraically closed field of characteristic not two, we have isomorphisms
  \begin{align*}
    h^+(X) &\cong \W^0(X) \oplus \W^2(X) \\
    h^-(X) &\cong \W^1(X) \oplus \W^3(X)
  \end{align*}
\end{thm*}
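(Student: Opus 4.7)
The plan is to leverage the framework of \cite{Me:KOFF}, which relates Witt groups to the hermitian $K$-theory spectrum through Schlichting's fundamental fibrations. The relevant input is a long exact sequence of the shape
\[
\cdots \to \K_{i+1}(X) \xrightarrow{H} \GW^n_{i+1}(X) \to \W^n_i(X) \to \K_i(X) \xrightarrow{H} \GW^n_i(X) \to \cdots,
\]
together with Schlichting's Bott fibration $\GW^{n-1} \to \K \to \GW^n$. Combined, these let one describe the Witt groups as subquotients of the $K$-groups under the duality involution, corrected by contributions from higher hermitian $K$-theory.

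For a smooth cellular $X$ over an algebraically closed field $k$ of characteristic not two, I would then use the cellular structure to strip these corrections away. A cell decomposition gives compatible splittings $\K_*(X) \cong \K_*(k) \otimes_\Z \K_0(X)$ and analogous splittings of the hermitian $K$-groups, with the duality involution acting only on the $\K_0(X)$-factor. Over the algebraically closed field $k$, the $K$- and $\GW$-groups are understood in sufficient detail (via Suslin-type results on the one hand and the classical computation $\W^\total(k)=\Z/2$ on the other) to show that the Tate cohomology of the higher $K$-groups of $k$ either vanishes or is absorbed by the hermitian correction terms. In effect, the whole hermitian input collapses to the Tate cohomology of the duality on $\K_0(X)$ alone.

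Assembling the ingredients, the $\W^n(X)$ are identified with the $C_2$-Tate cohomology of $\K_0(X)$. Since Tate cohomology is two-periodic whereas the Witt groups are four-periodic, the identification is two-to-one on gradings: the two even Witt groups and the two odd Witt groups pair up, producing the asserted isomorphisms
\[
h^+(X) \cong \W^0(X) \oplus \W^2(X) \quad\text{and}\quad h^-(X) \cong \W^1(X) \oplus \W^3(X).
\]

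The main obstacle is the careful bookkeeping in the last step: one must verify the compatibility of the cellular splitting with the duality involution, ensure that every contribution from higher hermitian $K$-theory of $k$ genuinely cancels or vanishes, and match the four-periodic Witt grading against the two-periodic Tate grading so that the even/odd split comes out as stated rather than as two separate isomorphisms.
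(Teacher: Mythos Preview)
Your approach is genuinely different from the paper's, and it has a real gap at its centre.

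The paper does not attempt any tensor decomposition of the hermitian $K$-groups. Instead, it constructs explicit comparison maps: the forgetful maps $F^i\colon \GW^i\to \K_0$ descend to maps
\[
(\bar F^i,\bar F^{i+2})\colon \W^i(X)\oplus\W^{i+2}(X)\longrightarrow h^{i}(X),
\]
and the task is to show that these are isomorphisms. This is done by induction over the cells, comparing the twelve-term Witt localization sequence with the six-term Tate cohomology sequence attached to the short exact sequence $0\to \K_0^{Z_k}(U_k)\to \K_0(U_k)\to \K_0(U_{k+1})\to 0$. Two lemmas make the ladder commute: one checking that the forgetful functor intertwines the two boundary maps, and one checking that it is compatible with the Thom isomorphisms identifying $\W^\total_{Z}(U)$ and $h^\total_Z(U)$ with the corresponding groups of the cell $Z$. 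The base case is the point, and the Five Lemma does the rest. No higher hermitian $K$-groups enter at all.

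Your plan, by contrast, hinges on the assertion that a cell decomposition yields ``analogous splittings of the hermitian $K$-groups'' with the duality acting only on the $\K_0(X)$-factor. This is where the argument breaks down. The cellular splitting of $\K_*$ comes from the fact that the $\K$-theory localization sequences for the cells split; but the Witt and $\GW$ localization sequences do \emph{not} split in general, precisely because the duality on $\K_0(X)$ is nontrivial and mixes the cells. There is no evident tensor formula $\GW^n_i(X)\cong \GW^n_i(k)\otimes_\Z \K_0(X)$, and already for $\P^1$ the Witt groups are not of this shape. One could try to salvage the idea via a motivic cell decomposition in the stable motivic homotopy category and then evaluate the hermitian $K$-theory spectrum on it, but that is a substantially harder argument than you have sketched, and you would still need to produce the explicit even/odd matching rather than infer it from periodicity alone. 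As written, the ``collapse'' of the higher hermitian corrections is asserted rather than proved.
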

This theorem and some related results of \Cref{sec:W=h} have already been used in \cite{Me:KOFF} to obtain a description of the untwisted Witt rings of full flag varieties.  Unfortunately, the assumption that the ground field is algebraically closed is essential in our approach: there is no chance for the relationship between Witt groups and Tate cohomology to hold over an arbitrary field in the form stated as Tate cohomology is always two-torsion.
This is the reason why said assumption enters all results on Witt groups presented here.
However, each result has a precursor in terms of Tate cohomology which we prove without this assumption.

Apart from being valid over arbitrary fields, Calmès and Fasel's approach has another competitive advantage: not only is it much easier to state, but also the whole proof fits onto two pages.
However, it should be noted that a key input of their argument is an existing `vanishing result' given by Walter's calculations of the Witt groups of projective bundles \cite{Walter:PB}.  The approach presented here does not presuppose any existing Witt group computations.

\subsection*{Acknowledgements}
I warmly thank Baptiste Calmès and Jean Fasel for many interesting discussions and for sharing their ideas on the problem considered here.  In particular, their unpublished computations of Witt groups through cellular spectral sequences have helped me track down mistakes in an earlier version of this article. They also have an independent proof of the above theorem relating Witt groups and Tate cohomology.

\encouragepagebreak
\section{Statement of the results}\label{sec:results}
\subsection{A structure theorem}\label{sec:dichotomy}
Given a smooth variety \(X\) over a field of characteristic not two, we write \(\W^i(X)\) to denote the \(i^\text{th}\) shifted Witt group of the derived category of vector bundles over \(X\) equipped with the usual duality \(\vb E\mapsto \vb Hom(\vb E,\vb O_X)\) \cites{Balmer:TWGI,Balmer:TWGII}.
More generally, given a line bundle \(\lb L\) over \(X\), we write \(\W^i(X,\lb L)\) to denote the corresponding `twisted' Witt groups of \(X\), \ie the Witt groups of the same category equipped with the duality \(\vb E\mapsto \vb Hom(\vb E,\lb L)\).
As there are canonical isomorphisms \(\W^i(X,\lb L)\cong\W^{i+4}(X,\lb L)\), we will index these groups with \(i\in\Z/4\) in the following. We define the total \(\lb L\)-twisted Witt group of \(X\) as the direct sum
\[
\W^\total(X,\lb L) := \bigoplus_{i\in\Z/4} \W^i(X,\lb L).
\]
Again, when \(\lb L = \lb O_X\), we simply write \(\W^\total(X)\).
These total Witt groups obviously have a natural \(\Z/4\)-grading.
However, we will sometimes forget half of this grading and consider them to be only \(\Z/2\)-graded with homogeneous parts \(\W^{0,2}\) and \(\W^{1,3}\) defined in the obvious way:
\begin{align*}
   \W^{0,2}(X,\lb L) &:= \W^0(X,\lb L)\oplus \W^2(X,\lb L) \\
   \W^{1,3}(X,\lb L) &:= \W^1(X,\lb L)\oplus \W^3(X,\lb L)
\end{align*}
For any line bundles \(\lb L\) and \(\lb M\), we have a pairing
\[
\W^i(X,\lb L) \otimes \W^j(X,\lb M) \rightarrow \W^{i+j}(X,\lb L\otimes\lb M)
\]
such that \(\W^\total(X)\) is a (\(\Z/4\)- or \(\Z/2\)-)graded ring over which the twisted total Witt groups are graded modules \cite{GilleNenashev}.  It follows easily from the definitions that any line bundle \(\lb L\) over \(X\) defines a class \([\lb L]\) in \(\W^0(X,\lb L^2)\) such that multiplication with \([\lb L]\) induces an isomorphism
\[
\W^\total(X)\xrightarrow{\cong} \W^\total(X,\lb L^2)
\]
and more generally isomorphisms \(\W^\total(X,\lb N)\cong \W^\total(X,\lb N\otimes \lb L^2)\).
In particular, up to (non-unique) isomorphism, the \(\lb L\)-twisted Witt groups of \(X\) only depend on the class of \(\lb L\) in \(\Pic(X)/2\).

For twists by a line bundle \(\lb L\) that cannot be expressed as a tensor square, we are not aware of any general results concerning the relationship of \(\W^\total(X,\lb L)\) to \(\W^\total(X)\).
However, it turns out that when \(X\) is a flag variety, there are only two possibilities: either the twisted total Witt group vanishes, or it is free of rank one as a \(\Z/2\)-graded module over \(\W^\total(X)\).
Here and in the following, by a flag variety we mean a quotient \(G/P\) of a split semisimple algebraic group \(G\) by a parabolic subgroup \(P\).

\begin{thmStructure}\label{mainthm:iso}
  Let \(\lb L\) be a line bundle over a flag variety \(G/P\) over an algebraically closed field of characteristic not two.
  If the twisted total Witt group \(\W^\total(G/P,\lb L)\) is non-zero, then there exists an element \(\zeta_{\lb L}\) in \(\W^{0,2}(G/P,\lb L)\) such that multiplication by \(\zeta_{\lb L}\) induces an isomorphism of \(\Z/2\)-graded modules
  \[
  \W^\total\left(\factor{G}{P}\right) \xrightarrow[\cdot \zeta_{\lb L}]{\cong} \W^\total\left(\factor{G}{P},\lb L\right).
  \]
  Moreover, if both \(\W^\total(G/P,\lb L_1)\) and \(\W^\total(G/P,\lb L_2)\) are non-zero for line bundles \(\lb L_1\) and \(\lb L_2\), then so is \(\W^\total(G/P,\lb L_1\otimes \lb L_2)\) and we may choose \(\zeta_{\lb L_1\otimes \lb L_2} = \zeta_{\lb L_1}\cdot \zeta_{\lb L_2}\).
\end{thmStructure}
In particular, in the cases when the total \(\lb L\)-twisted Witt group is non-zero, we have isomorphisms
\[
\W^{0,2}(G/P)\cong \W^{0,2}(G/P,\lb L) \quad \text{ and } \quad \W^{1,3}(G/P)\cong \W^{1,3}(G/P,\lb L).
\]
The failure of these isomorphisms to respect the finer \(\Z/4\)-grading can already be seen for the most simple examples of flag varieties, as illustrated by the last line of \Cref{table:Gr}.
\begin{table}[b!t!]
  \begin{center}
    \newlength{\testheight}
    \settoheight{\testheight}{\( \dfrac{b}{b} \)}
    \begin{tabular}{l|MMMM|MMMM}
      \toprule
                            & \multicolumn{4}{M|}{\W^\total(X) } & \multicolumn{4}{M}{\W^\total(X,\lb L)}\\
       \(X\)           & {\W^0}     & {\W^1}  & {\W^2} & {\W^3} & {\W^0} & {\W^1} & {\W^2}     & {\W^3} \\
      \midrule
      \(\P^1               \) & \Z/2     & \Z/2  & 0    & 0    & 0    & 0    & 0        & 0   \\
      \(\P^4               \) & \Z/2     & 0     & 0    & 0    & \Z/2 & 0    & 0        & 0   \\
      \(\P^2               \) & \Z/2     & 0     & 0    & 0    & 0    & 0    & \Z/2     & 0   \\
      \(\Grassmannian{2}{4}\) & (\Z/2)^3 & 0     & 0    & 0    & \Z/2 & 0    & (\Z/2)^2 & 0   \\
      \bottomrule
    \end{tabular}
    \caption{The Witt groups of some projective spaces and of the Grassmannian of planes in six-dimensional space, over an algebraically closed field. In each case, the Picard group is free abelian on one generator \(\lb L\).}
    \label{table:Gr}
  \end{center}
\end{table}
\begin{rem}
  Essentially, \Cref{mainthm:iso} says that the total Witt group \(\W^\total(G/P,\lb L)\) behaves like the space of solutions to a linear system of equations with constant term \(\lb L\).  This is no coincidence: our proof will eventually reduce to this situation.
\end{rem}

\subsection{A vanishing theorem}\label{sec:vanishing}
We will describe for which line bundles the twisted Witt groups vanish using a graphical notation that we now explain.
Let \(T\) be a split maximal torus of \(G\), and let \(X^* := X^*(T)\) be its character group, the weight lattice of \(G\).
We write \(R\subset X^*\) for the set of roots of \(G\), and we chose a set of simple roots \(\simpleR\).

Note first that any flag variety under \(G\) is also a flag variety under its simply connected cover. So we may and will assume that \(G\) is simply connected.  Simply connected semisimple groups \(G\) are determined up to isomorphism their Dynkin diagram.

Next, we have a one-to-one correspondence between subsets \(\thetaR\subset\simpleR\) and conjugacy classes of parabolic subgroups of \(G\).
Thus, we can specify \(G/P\) up to isomorphism by drawing the Dynkin diagram of \(G\) with those nodes corresponding to the roots in \(\thetaR\) coloured, as in the introduction. For example, the diagram
\[\ifdraft{}{
  \begin{tikzpicture}[dynPicture]
    {[dynDiagram]
      \dynR{}
      {[dynTheta]
        \dynR{}
      }
      \dynR{}
      \dynR{}
      \dynR{}
      {[dynTheta]
        \dynR{}
      }
      \dynR[by right arrow]{}
    }
  \end{tikzpicture}
}\]
will denote the quotient of the simply connected semisimple group of type \(B_7\) by a parabolic subgroup corresponding to a two-element subset of \(\simpleR\).
The subdiagram consisting of all black nodes and edges between them is the Dynkin diagram of the Levi subgroup of \(P\) (\cf \Cref{sec:rep-of-P}).

A standard representative of the conjugacy class corresponding to a subset \(\thetaR\subset\simpleR\) is given by the subgroup \(P_\thetaR\) of \(G\) generated by the maximal torus \(T\), the root subgroups of all negative roots, and the root subgroups of all positive roots contained in \(\Z\thetaR\).  We will assume from now on that \(P\) is of this form.

The Picard group of \(G/P\) is a free abelian group on line bundles \(\lb L_\beta\) corresponding to the simple roots in \(\nthetaR\), \ie to the white nodes of our diagram.
This correspondence can be described as follows: Let \(\left\{\omega_\sigma\mid\sigma\in\simpleR\right\}\) be the set of fundamental weights.  As we have assumed \(G\) to be simply connected, these are actual characters of \(T\) and not just elements of \(X^*\otimes\Q\).  Those fundamental weights \(\omega_\beta\) with \(\beta\in\nthetaR\) extend to characters of \(P = P_\thetaR\) and hence define line bundles \(\lb L_{\beta}\) over \(G/P\).

We can thus specify a line bundle \(\lb L_\beta\) over \(G/P\) by drawing a `line mark' over the node corresponding to \(\beta\) in the Dynkin diagram of \(G/P\), like so:
\[\ifdraft{}{
  \begin{tikzpicture}[dynPicture]
    {[dynDiagram]
      \dynR{}
      \dynR{}
      \dynR{}
      \dynR{}
      \dynR{ (c) }
      {[dynTheta]
        \dynR{}
      }
      \dynR[by right arrow]{}
    }
    \dyntwist{c}
  \end{tikzpicture}
}\]
More generally, we will use a line mark connecting several such nodes to specify a tensor product of the corresponding line bundles.
For example,
\[\ifdraft{}{
  \begin{tikzpicture}[dynPicture]
    {[dynDiagram]
      \dynR{[lbl=$\beta_1$] (b1) }
      \dynR{[theta][lbl=$\teta_1$] }
      \dynR{[lbl=$\beta_2$] (b2)}
      \dynR{ }
      \dynR{[lbl=$\beta_3$] (b3)}
      \dynR{[theta][lbl=$\teta_2$] (v)}
      \dynR{[above right=of v][label={right:$\beta_5$}] (b5)}
      \dynR[with v]{[below right=of v][label={right:$\beta_4$}] (b4)}
    }
    \dyndoubletwist{b1}{b2}
    \dyntripletwist{b4}{b3}{b5}
  \end{tikzpicture}
}\]
will denote the line bundles \mbox{\(\lb L_{\beta_1}\otimes \lb L_{\beta_2}\)} and \(\lb L_{\beta_3}\otimes \lb L_{\beta_4} \otimes \lb L_{\beta_5}\) on \(G/P_{\{\teta_1,\teta_2\}}\), respectively.

By the second part of \Cref{mainthm:iso}, the classes in \(\Pic(G/P)/2\) of all line bundles \(\lb L\) for which \(W^*(G/P,\lb L)\) is non-zero form a linear subspace.
The idea of our vanishing theorem is to describe a set of generators of this subspace by specifying the corresponding line marks of the Dynkin diagram of \(G/P\).  The precise rules for obtaining these marks are laid out in the next section.
\begin{thmVanishing}\label{mainthm:vanishing}
  Let \(\lb L\) be a line bundle over a flag variety \(G/P\) as in \Cref{mainthm:iso}.
  Assume without loss of generality that \(G\) is simply connected, and that we have chosen a set of simple roots \(\simpleR\) such that \(P\) is the standard parabolic subgroup corresponding to a subset \(\thetaR\subset\simpleR\).
  Then the \(\lb L\)-twisted total Witt group \(\W^\total(G/P,\lb L)\) is non-zero if and only if the reduction of \(\lb L\) modulo two is a linear combination of the line bundles marked in the Dynkin diagram of \(G/P\) according to the marking scheme below.
\end{thmVanishing}

\subsection{A marking scheme}\label{sec:marking-scheme}
Let \(G/P\) and \(\thetaR\subset\simpleR\) be as in \Cref{mainthm:iso}.  We use the same letters to denote the corresponding Dynkin diagrams, \ie \(\simpleR\) for the Dynkin diagram of \(G\) and \(\thetaR\) for the subdiagram spanned by the black nodes.  Each connected component \(\thetaR_0\) of \(\thetaR\) determines line marks only on the connected component of \(\simpleR\) in which it is contained, so we may assume that \(\simpleR\) is connected.  Moreover, the marks contributed by \(\thetaR_0\) do not depend on any other components of \(\thetaR\), so we can lay out the rules for marking the diagram one connected component \(\thetaR_0\) at a time.
We organize these rules according to the type of the ambient diagram \(\simpleR\), illustrating each case with a short list of examples.

\newcommand{\Luft}{\vskip 10pt plus 60pt minus 10pt\pagebreak[3]}
\Luft
\begin{description}
  \newcommand{\pfusch}{$ $\\\nopagebreak\vspace{-12pt}\nopagebreak} \newcommand{\mbf}[1]{\ensuremath{\pmb{#1}}} \newcommand*{\outerdiag}[1]{\mbf{\simpleR} of type \mbf{#1}} \newcommand*{\innerdiag}[2]{\mbf{\thetaR_0} of type \mbf{#1}}
\item[\outerdiag{A_n}]\hfill\pfusch
\begin{asparadesc}
  \item[\innerdiag{A_l}{A_n} with \mbf{l} odd]$ $\\
    Connect all neighbours of \(\thetaR_0\) with a single mark. That is:\\
    If \(\thetaR_0\) has a unique neighbour, mark that neighbour.\\
    If \(\thetaR_0\) has two neighbours, connect these with a mark.
    \vskip \plitemsep

    (Subdiagrams of type $A_l$ with $l$ even do contribute any marks.)
  \end{asparadesc}
  \begin{multicols}{3}
    \begin{tikzpicture}[dynPicture,auto]
      {[dynDiagram] \dynR{ } \dynR{ (a) } \dynR{[theta]} \dynR{ (c) } \dynR{ } \dyndoubletwist{a}{c} }
    \end{tikzpicture}

    \begin{tikzpicture}[dynPicture,auto]
      {[dynDiagram] \dynR{ (a) } \dynR{[theta]} \dynR{[theta]} \dynR{ } \dynR{ } \dyntwist[white]{a}
      }
    \end{tikzpicture}

    \begin{tikzpicture}[dynPicture,auto]
      {[dynDiagram] \dynR{[theta]} \dynR{[theta]} \dynR{[theta]} \dynR{ (d) } \dynR{ } \dyntwist{d} }
    \end{tikzpicture}
  \end{multicols}
  \Luft

\item[\outerdiag{B_n}]\hfill\pfusch
  \begin{asparadesc}
  \item[\innerdiag{A_l}{B_n} with \mbf{l} odd]$ $\\
    Connect all neighbours of \(\thetaR_0\) with a single mark, except for the unique shortest root \(\sigma_n\) of \(\simpleR_0\).  This root is never marked.
  \item[\innerdiag{B_l}{B_n}] Mark the unique neighbour (for \(l\neq n\)).
    \vskip \plitemsep

    (Subdiagrams of type $A_l$ with $l$ even do not contribute any marks.)
  \end{asparadesc}
  \begin{multicols}{2}
    \ifdraft{}{
      \begin{tikzpicture}[dynPicture]
        {[dynDiagram] \dynR{ (a) } {[dynTheta] \dynR{} } \dynR{ (b) } \dynR{} \dynR{ (c) } {[dynTheta] \dynR{} } \dynR[by right arrow]{[lbl=$\sigma_n$]} } \dyndoubletwist{a}{b} \dyntwist{c}
      \end{tikzpicture}

      \begin{tikzpicture}[dynPicture]
        {[dynDiagram] {[dynTheta] \dynR{} \dynR{} } \dynR{} \dynR{ (c) } \dynR{[theta] } \dynR{[theta]} \dynR[by right arrow]{[theta][lbl=$\sigma_n$]} } \dyntwist{c}
      \end{tikzpicture}
    }
  \end{multicols}
  \Luft

\item[\outerdiag{C_n}]\hfill\pfusch
  \begin{asparadesc}
  \item[\innerdiag{A_l}{C_n} with \mbf{l} odd]$ $\\
    Connect all neighbours of \(\thetaR_0\) with a single mark.
    \vskip \plitemsep

    (Subdiagrams of types $A_l$ with $l$ even, $B_2$ and $C_l$ do not contribute any marks.)
  \end{asparadesc}
  \begin{multicols}{2}
    \ifdraft{}{
      \begin{tikzpicture}[dynPicture]
        {[dynDiagram] \dynR{ (a) } {[dynTheta] \dynR{} } \dynR{ (b) } \dynR{} \dynR{ (c) } {[dynTheta] \dynR{} } \dynR[by left arrow]{ (d) } } \dyndoubletwist{a}{b} \dyndoubletwist{c}{d}
      \end{tikzpicture}

      \begin{tikzpicture}[dynPicture]
        {[dynDiagram] {[dynTheta] \dynR{} \dynR{} } \dynR{} \dynR{ (c) } \dynR{[theta] } \dynR{[theta]} \dynR[by left arrow]{[theta]} } \dyntwist[white]{c}
      \end{tikzpicture}
    }
  \end{multicols}
  \Luft

\item[\outerdiag{D_n}]\hfill\pfusch
  \begin{asparadesc}
  \item[\innerdiag{A_l}{D_n} with \mbf{l} odd]$ $\\
    Connect all neighbours of the two outer roots of \(\thetaR_0\) with a mark.\\ (For \(l=1\), connect all neighbours.)
  \item[\innerdiag{D_l}{D_n} with \mbf{l} even] Mark the unique neighbour.
    \vskip \plitemsep

    (Subdiagrams of types $A_l$ with $l$ even and $D_l$ with $l$ odd do not contribute any marks.)
  \end{asparadesc}
  \begin{multicols}{3}
    \ifdraft{}{
      \begin{tikzpicture}[dynPicture]
        {[dynDiagram] \dynR{ (a) } \dynR{[theta]} \dynR{[theta]} \dynR{[theta] (v)} \dynR{[above right=of v] (vv)} \dynR[with v]{[below right=of v] (w)} \dyntripletwist{vv}{a}{w} }
      \end{tikzpicture}

      \begin{tikzpicture}[dynPicture]
        {[dynDiagram] \dynR{ } \dynR{ (a) } \dynR{[theta]} \dynR{[theta] (v)} \dynR{[theta][above right=of v] (vv)} \dynR[with v]{[below right=of v] (w)} \dyntwist{a} \dyntwist[white]{vv}
        }
      \end{tikzpicture}

      \begin{tikzpicture}[dynPicture]
        {[dynDiagram] \dynR{ } \dynR{ (a) } \dynR{[theta]} \dynR{[theta] (v)} \dynR{[theta][above right=of v] (vv)} \dynR[with v]{[theta][below right=of v] (w)} \dyntwist[white]{vv}
        }
        \dyntwist{a}
      \end{tikzpicture}
    }
  \end{multicols}
  \Luft

\item[\outerdiag{E_n} (\mbf{n\in\{6,7,8\}})]\hfill\pfusch
  \begin{asparadesc}
  \item[\innerdiag{A_l}{E_n} with \mbf{l} odd]$ $\\\noindent
    If \(l=1\), connect all neighbours of \(\thetaR_0\) with a mark.\\
    If \(l=3\), connect all neighbours of the two outer roots of \(\thetaR_0\) with a mark.\\
    If \(l=5\), connect all neighbours of the two outer roots and of the central root of \(\thetaR_0\) with a mark.\\
    If \(l=7\) and \(n=8\), mark the unique neighbour of \(\thetaR_0\).
  \item[\innerdiag{D_l}{E_n}]$ $\\\noindent
    If \(l=4\), \(l=6\) or \(l=7\), mark all neighbours of \(\thetaR_0\) individually.\\
    \parbox{-3cm+\linegoal}{ If \(l=5\), one of the roots labeled \(\teta_4\) and \(\teta_5\) in the diagram on the right will have a unique neighbour in \(E_n\).  Mark that neighbour.  }
    \hfill
    \parbox{2.5cm}{\raggedright
      \ifdraft{\\-- a figure--}{
        \mbox{}\hfill
        \begin{tikzpicture}[dynPicture]
          {[dynDiagram, dynTheta]
            \dynR{[lbl=$\Huge\teta_1$]}
            \dynR{[lbl=$\teta_2$]}
            \dynR{[lbl=$\teta_3\;\;$](v)}
            \dynR{[label={right:$\teta_5$}][above right=of v] (vv)}
            \dynR[with v]{[label={right:$\teta_{4}$}][below right=of v] (w)}
          }
        \end{tikzpicture}
      } }
  \item[\innerdiag{E_l}{E_n}]$ $\\\noindent
    If \(l=7\) and \(n=8\), mark the unique neighbour of \(\thetaR_0\).

    \vskip \plitemsep
    (Subdiagrams of types $A_l$ with $l$ even and $E_6$ do not contribute any marks.)
  \end{asparadesc}

  \begin{multicols}{2}
    \ifdraft{}{
      \begin{tikzpicture}[dynPicture]
        {[dynDiagram]
          \dynR{(pos1) }
          \dynR{ (a8) }
          \dynR{[theta] (e68)}
          \dynR[with e68 by line]{[above=of e68] (b8)}
          \dynR[with e68 by line]{[right=of e68] (c8)}
          \dynR{ }
          \dynR{ } }
        \dyntripletwist{a8}{b8}{c8}
        {[dynDiagram]
          \dynR{[below=\Platz of pos1] (a7) }
          \dynR{[theta] }
          \dynR{[theta] (e67) }
          \dynR[with e67 by line]{[above=of e67]}
          \dynR[with e67 by line]{[right=of e67][theta]}
          \dynR{ (b7) } \dynR{ } }
        \dyndoubletwist{a7}{b7}
        {[dynDiagram]
          \dynR{[below=2\Platz of pos1] }
          \dynR{ }
          \dynR{[theta] (e61) }
          \dynR[with e61 by line]{[above=of e61][theta]}
          \dynR[with e61 by line]{[right=of e61][theta]}
          \dynR{ (a1) }
          \dynR{ } }
        \dyntwist{a1}
         {[dynDiagram]
          \dynR{[below=3\Platz of pos1] }
          \dynR{ (a5) }
          \dynR{ (e65) [theta]}
          \dynR[with e65 by line]{[above=of e65] (b5) }
          \dynR[with e65 by line]{[right=of e65][theta]}
          \dynR{[theta]}
          \dynR{ (c5) } }
        \dyntripletwist{a5}{b5}{c5}
      \end{tikzpicture}
      \columnbreak

      \begin{tikzpicture}[dynPicture]
        {[dynDiagram]
          \dynR{at (0,0) [theta] (pos1)}
          \dynR{[theta]} \dynR{[theta] (e62) }
          \dynR[with e62 by line]{[above=of e62] (a2)}
          \dynR[with e62 by line]{[right=of e62][theta]}
          \dynR{[theta]} \dynR{ (b2) }
          \dyndoubletwist{a2}{b2} }
        {[dynDiagram]
          \dynR{[below=\Platz of pos1] (a3) }
          \dynR{[theta]} \dynR{[theta] (e63) }
          \dynR[with e63 by line]{[above=of e63]}
          \dynR[with e63 by line]{[right=of e63][theta]}
          \dynR{[theta]} \dynR{[theta]} }
        \dyntwist{a3}
        {[dynDiagram]
          \dynR{[theta][below=2\Platz of pos1]  }
          \dynR{[theta] } \dynR{[theta] (e64) }
          \dynR[with e64 by line]{[theta][above=of e64]}
          \dynR[with e64 by line]{[theta][right=of e64]} \dynR{ (b4) }
          \dynR{ } }
         \dyntwist{b4}
        {[dynDiagram]
          \dynR{[below=3\Platz of pos1] (a6)}
          \dynR{[theta] } \dynR{[theta] (e66) }
          \dynR[with e66 by line]{[above=of e66][theta]}
          \dynR[with e66 by line]{[right=of e66][theta]}
          \dynR{[theta]} \dynR{ }
          \dyntwist{a6} }
      \end{tikzpicture}
    }
  \end{multicols}
  \Luft

\item[\outerdiag{F_4}]
  Distribute marks according to the following diagrams:
  \begin{multicols}{3}\raggedcolumns
    \begin{tikzpicture}[dynPicture]
      {[dynDiagram] \dynR{[theta] at (0,0)} \dynR{ (t) } \dynR[by right arrow]{} \dynR{} \dyntwist{t} } {[dynDiagram] \dynR{at (0,-1) (t) } \dynR{[theta]} \dynR[by right arrow]{} \dynR{} \dyntwist{t} } {[dynDiagram] \dynR{at (0,-2)} \dynR{(t)} \dynR[by right arrow]{[theta]} \dynR{(tt)} \dyndoubletwist{t}{tt} } {[dynDiagram] \dynR{at (0,-3)} \dynR{} \dynR[by right arrow]{ (t) } \dynR{[theta]} \dyntwist{t} }
    \end{tikzpicture}
    \columnbreak

    \begin{tikzpicture}[dynPicture]
      {[dynDiagram] \dynR{at (0, 0) (a) [theta]} \dynR{[theta]} \dynR[by right arrow]{       } \dynR{} \dyntwist[white]{a}  }
      {[dynDiagram] \dynR{at (0,-1) (t)        } \dynR{[theta]} \dynR[by right arrow]{[theta]} \dynR{} \dyntwist{t} }
      {[dynDiagram] \dynR{at (0,-2)            } \dynR{       } \dynR[by right arrow]{[theta]} \dynR{[theta]} }
    \end{tikzpicture}
    \columnbreak

    \begin{tikzpicture}[dynPicture] {[dynDiagram] \dynR{[theta] at (0,0)} \dynR{[theta]} \dynR[by right arrow]{[theta]} \dynR{(t)} \dyntwist{t} } {[dynDiagram] \dynR{at (0,-1) (t)} \dynR{[theta]} \dynR[by right arrow]{[theta]} \dynR{[theta]} \dyntwist{t} }
    \end{tikzpicture}
  \end{multicols}
  \Luft

\item[\outerdiag{G_2}]\hfill\pfusch
  \begin{asparadesc}
  \item[\mbf{A_1\subset G_2}] Mark the unique neighbour:\phantom{Platz}
    \begin{tikzpicture}[dynPicture]
      {[dynDiagram] \dynR{ (a) } \dynR[by triple right arrow]{[theta]} } \dyntwist{a}
    \end{tikzpicture}
    \phantom{Platz}
    \begin{tikzpicture}[dynPicture]
      {[dynDiagram] \dynR{[theta]} \dynR[by triple right arrow]{ (b) } } \dyntwist{b}
    \end{tikzpicture}
  \end{asparadesc}
\end{description}
\Luft

When \(\thetaR\) has several connected components, the resulting marked diagram may sometimes be simplified.  After all, we are only interested in the subspace of \(\Pic(G/P)\) generated by the marked line bundles rather than in a particular choice of generators.  For example, the following diagram marked according to the above scheme
\[
  \begin{tikzpicture}[dynPicture,auto]
    {[dynDiagram]
      \dynR{ (a) }
      \dynR{[theta]}
      \dynR{[theta] (v)}
      \dynR{[theta][above=of v]}
      \dynR[with v by line]{[theta][right=of v]}
      \dynR{ (b) }
      \dynR{[theta]}
      \dynR{ (c) }
    }
    \dyntwist{a}
    \dyntwist{$(b)+(0,0.5)$}
    \dyndoubletwist{b}{c}
  \end{tikzpicture}
\]
may be simplified to:
\[
\begin{tikzpicture}[dynPicture,auto]
  {[dynDiagram]
      \dynR{ (a) }
      \dynR{[theta]}
      \dynR{[theta] (v)}
      \dynR{[theta][above=of v]}
      \dynR[with v by line]{[theta][right=of v]}
      \dynR{ (b) }
      \dynR{[theta]}
      \dynR{ (c) }
    }
    \dyntwist{a}
    \dyntwist{b}
    \dyntwist{c}
\end{tikzpicture}
\]
Note that all diagrams  in the introduction remain undecorated according to the above marking scheme.

\encouragepagebreak
\section{From Witt rings to representation rings}\label{sec:W-to-Rep}
In this section we explain how we translate the computation of Witt rings of flag varieties to a computation with representation rings.
First, in \Cref{sec:W=h},  we show that the Witt ring of a smooth cellular variety over an algebraically closed field may be identified with the Tate cohomology of its K-ring.
Here, by a cellular variety we mean a variety \(X\) which can be equipped with a filtration by closed subsets
\[
X = X^0 \supset X^1 \supset X^2 \supset \cdots
\]
such that each complement \(X^i-X^{i+1}\) is isomorphic to affine space \(\A^{n_i}\) for some \(n_i\).
Split flag varieties are among the main examples of varieties with this property \cite{Koeck}*{Prop.~(1.3)}.
In \Cref{sec:K-of-flags}, we briefly recall the well-known representation-theoretic description of their K-rings.

\subsection{Witt rings as Tate cohomology rings}\label{sec:W=h}
\newcommand{\catdual}{\vee}
Let \( (\cat D,\star,\omega) \) be a triangulated category with duality in the sense of \cite{Balmer:TWGI}.
Then the duality functor \(\star\) induces an involution on the K-group \(\K(\cat D)\), which we still denote \(\star\).
In a slight abuse of terminology (\cf \Cref{rem:tate}), we define the associated Tate cohomology groups as
\begin{align*}
  h^+(\K(\cat{D}), \star ) &:= \frac{\ker(\id -\;\star)}{\im(\id +\;\star)}\\
  h^-(\K(\cat{D}), \star ) &:= \frac{\ker(\id +\;\star)}{\im(\id -\;\star)}
\end{align*}
and write \(h^\total(\K(\cat D),\star)\) for the \(\Z/2\)-graded group \(h^+(\K(\cat D),\star)\oplus h^-(\K(\cat D),\star)\).   Occasionally, the alternative notation
\begin{align*}
  h^i(\K(\cat{D}), \star)
  &:=\begin{cases}
    h^+(\K(\cat{D}),\star) & \text{ if } i\in\Z \text{ is even}\\
    h^-(\K(\cat{D}),\star) & \text{ if } i\in\Z \text{ is odd}
  \end{cases}
\end{align*}
allows more concise statements.  However, independently of which notation is used, we always think of \(h^\total(\K(\cat D),\star)\) as \(\Z/2\)- rather than \(\Z\)-graded.  The few basic facts surrounding these cohomology groups that we will need, like the existence of long exact Tate cohomology sequences, are summarized in \Cref{sec:twisted-tate} below.

Let \( \GW^i(\cat D) \) denote the \(i^\text{th}\)-shifted \GrothendieckWitt group of \( (\cat D, \star, \omega) \) and let \( F^i\colon \GW^i(\cat D)\rightarrow \K(\cat D) \) and \( H^i\colon \K(\cat D)\rightarrow \GW^i(\cat D) \) denote the forgetful and hyperbolic maps, respectively. By \cite{Walter:TGW} they fit into fundamental exact sequences
\begin{equation}\label{eq:Karoubi-exact-sequence}
\GW^{i-1}(\cat D) \xrightarrow{F^{i-1}} \K(\cat D) \xrightarrow{\;H^i\;} \GW^i(\cat D) \rightarrow \W^i(\cat D) \rightarrow 0.
\end{equation}
In particular, \( H^iF^{i-1} = 0 \), and the cokernel \( \GW^i(\cat D)/H^i \) may be identified with the Witt group \( \W^i(\cat D) \). The forgetful and hyperbolic maps interact with the involution on \( \K(\cat{D}) \) as follows:%
\begin{align*}
  H^i\star  &= (-1)^iH^i \\
  F^iH^i    &= \id + (-1)^{i}\star \\
  \star F^i &= (-1)^iF^i
\end{align*}
The following consequence of these equations was observed by Bousfield in the context of real topological K-theory  \cite{Bousfield:2-primary}*{Lemma~4.7}.
\begin{lem}\label{lem:well-defined}
  For any triangulated category with duality \( (\cat{D},\star,\omega) \), the hyperbolic and forgetful maps induce well-defined maps
  \begin{align*}
    \W^0(\cat{D}) \oplus \W^2(\cat{D}) \xrightarrow{(\bar F^0,\bar F^2)} & \;h^+(\K(\cat{D}),\star) \xrightarrow{\mm{\bar H^{-1}\\\bar H^1}} \ker F^{-1} \oplus \ker F^1 \\
    \W^1(\cat{D}) \oplus \W^3(\cat{D}) \xrightarrow{(\bar F^1,\bar F^3)} & \;h^-(\K(\cat{D}),\star) \xrightarrow{\mm{\bar H^0\\\bar H^2}} \ker F^0 \oplus \ker F^2
  \end{align*}
  The two horizontal compositions are of the form \( \mm{H^{i-1}F^i & 0 \\ 0 & H^{i+1}F^{i+2}} \).\qed
\end{lem}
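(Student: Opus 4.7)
The plan is to verify each assertion by direct manipulation of the three interaction identities between \(H^i\), \(F^i\) and \(\star\) listed just above the lemma, together with the Karoubi exact sequence \eqref{eq:Karoubi-exact-sequence}.

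For the forgetful maps \(\bar F^i\), I would first observe that \(\star F^i = (-1)^iF^i\) places the image of \(F^i\) inside \(\ker(\id - (-1)^i\star)\), which is exactly the numerator of \(h^+(\K(\cat D),\star)\) when \(i\) is even and of \(h^-(\K(\cat D),\star)\) when \(i\) is odd. Next, since \(F^iH^i = \id + (-1)^i\star\) takes values in \(\im(\id + (-1)^i\star)\) -- the denominator of the same Tate cohomology group -- the composition of \(F^i\) with the projection to Tate cohomology vanishes on \(\im H^i\) and hence factors through \(\W^i(\cat D) = \coker H^i\), yielding the desired \(\bar F^i\).

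For the hyperbolic maps \(\bar H^j\), I would fix the parity of \(j\) to match the intended source. When \(j\) is odd and \(x \in \ker(\id-\star)\) represents a class in \(h^+\), the identity \(F^jH^j(x) = (\id - \star)x = 0\) shows \(H^j(x) \in \ker F^j\); simultaneously, \(H^j((\id+\star)y) = H^jy + (-1)^jH^jy = 0\), so \(H^j\) kills the denominator of \(h^+\). Taking \(j \in \{-1,1\}\) yields \(\bar H^{-1}\) and \(\bar H^1\); the analogous argument with signs reversed and \(j\) even yields \(\bar H^0\) and \(\bar H^2\) starting from \(h^-\).

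To identify the composition, I would compute the four entries of the induced map \(\W^i(\cat D) \oplus \W^{i+2}(\cat D) \to \ker F^{i-1} \oplus \ker F^{i+1}\) individually. The diagonal entries are tautologically \(H^{i-1}F^i\) and \(H^{i+1}F^{i+2}\). For the off-diagonal entries, the Karoubi exact sequence provides \(H^kF^{k-1} = 0\) for every \(k\): substituting \(k = i+1\) kills \(H^{i+1}F^i\), and substituting \(k = i-1\) together with the \(\Z/4\)-periodicity \(F^{i-2} = F^{i+2}\) kills \(H^{i-1}F^{i+2}\). There is no substantial obstacle in this proof: everything is a formal consequence of the three interaction identities, and the only minor care needed is to combine the signs \((-1)^i\) with the four-periodicity of the Karoubi sequence to see that the off-diagonal terms vanish.
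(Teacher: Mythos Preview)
Your proof is correct and follows exactly the approach the paper intends: the lemma is stated with a \qed and no proof, being attributed as a direct consequence of the three interaction identities and the exact sequence \eqref{eq:Karoubi-exact-sequence}, which is precisely what you verify. The sign-tracking and the use of \(\Z/4\)-periodicity to kill the off-diagonal entries are all handled correctly.
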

Here is a simple scenario in which these maps are isomorphisms:
\begin{thm}\label{thm:W=h-abelian}
  Let \(\cat D\) be the derived category of a \(k\)-linear abelian category with an exact \(k\)-linear duality \((\cat A,\dual,\omega)\), where \(k\) is an algebraically closed field of characteristic not two.  Suppose that all objects of \(\cat A\) have finite length.
  Then the maps \((\bar F^i, \bar F^{i+2})\) and \(\mm{\bar H^{i}\\\bar H^{i+2}}\) of \Cref{lem:well-defined} are isomorphisms.\footnote{Note that \(h^-(\K(\cat{D}),\star)\), \(\W^1(\cat{D})\) and \(\W^3(\cat{D})\) vanish in this setting \cite{BalmerWalter:GWSS}*{Prop.~5.2}.}
\end{thm}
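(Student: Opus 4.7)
For $i$ odd the theorem is vacuous since both sides vanish by the cited result of Balmer--Walter. For the even case, the plan is to reduce to an orbit-wise analysis of the action of $\dual$ on the simple objects of $\cat A$.

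Since $k$ is algebraically closed, Schur's lemma gives $\Hom(S, S) \cong k$ for each simple object $S \in \cat A$. The duality induces an involution on the set $I$ of isomorphism classes of simples, partitioning $I$ into self-dual singletons $\mathcal{O} = \{[S]\}$ and non-self-dual pairs $\mathcal{O} = \{[S], [S^\dual]\}$. For each orbit $\mathcal{O}$, let $\cat A_{\mathcal{O}} \subset \cat A$ denote the Serre subcategory of objects whose composition factors lie in $\mathcal{O}$; this is stable under $\dual$. I would then establish the decompositions $\K(\cat A) \cong \bigoplus_{\mathcal{O}} \K(\cat A_{\mathcal{O}})$ and $\W^{\total}(\cat D) \cong \bigoplus_{\mathcal{O}} \W^{\total}(\cat D^b(\cat A_{\mathcal{O}}))$, compatibly with $\star$ and with the forgetful and hyperbolic maps.

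Granting this reduction, two short case analyses complete the proof. For a non-self-dual pair $\mathcal{O}$, the summand $\K(\cat A_{\mathcal{O}}) \cong \Z[S] \oplus \Z[S^\dual]$ is a free $\Z[C_2]$-module under $\star$, so its Tate cohomology vanishes; correspondingly, a standard sub-Lagrangian construction (filtering an object by its maximal sub-object whose composition factors are all isomorphic to $S$) shows that all $\pm\dual$-symmetric forms are metabolic, so $\W^{\total}(\cat D^b(\cat A_{\mathcal{O}})) = 0$. For a self-dual singleton $\mathcal{O} = \{[S]\}$, the K-group is $\Z[S]$ with trivial $\star$, so $h^+ \cong \Z/2$ is generated by $[S]$ and $h^- = 0$. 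Schur's lemma supplies, up to scalar, a unique non-zero map $\phi\colon S \to S^\dual$, which is either $\dual$-symmetric or $-\dual$-symmetric; it yields a generator of exactly one of $\W^0(\cat D^b(\cat A_{\mathcal{O}}))$ and $\W^2(\cat D^b(\cat A_{\mathcal{O}}))$, non-metabolic because $S$ has no proper subobjects, and of order two because $-1$ is a square in $k$. Tracing through the definitions of $\bar F^i$ and $\bar H^i$ then matches up generators on both sides of each map in \Cref{lem:well-defined}.

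The main obstacle is justifying the orbit-wise decomposition of the Witt groups. The K-theoretic version is routine devissage along composition series, but its Witt-theoretic counterpart is subtler because $\cat A$ itself need not split as a product of its subcategories $\cat A_{\mathcal{O}}$: there may be non-split extensions of $S$ by $S^\dual$ for a non-self-dual pair, and even extensions of a self-dual $S$ by itself. I would either invoke a general devissage theorem for Witt groups of abelian categories with exact duality, or proceed by induction on length of the underlying object: the sub-Lagrangian argument used in the non-self-dual case lets one peel off cross-orbit contributions, eventually reducing every form to one supported on a single orbit.
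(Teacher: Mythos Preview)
Your approach is essentially the paper's: decompose by duality orbits of simple objects and compute each piece. The one tactical difference is worth noting because it dissolves the obstacle you flag at the end. Instead of the Serre subcategory $\cat A_{\mathcal O}$, the paper takes the full subcategory $\cat A_S$ (respectively $\cat A_{S\oplus S^\dual}$) on \emph{semisimple} objects---direct summands of finite sums of $S$ (respectively $S\oplus S^\dual$). These are equivalent, as $k$-linear categories with duality, to finite-dimensional modules over $k$ (respectively over $k\times k$ with the swap involution), so the per-orbit \GrothendieckWitt and Witt groups, and the full exact sequences~\eqref{eq:Karoubi-exact-sequence}, are immediate. The global Witt decomposition is then obtained by a direct citation of the hermitian d\'evissage theorem of Quebbemann--Scharlau--Schulte \cite{QSS}*{Theorem~6.10}; the paper does not prove it by hand either. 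Your Serre subcategories are larger and may contain non-split self-extensions, so the per-orbit computation is no longer formally trivial: it still reduces to the semisimple piece by the same QSS argument, but then you might as well invoke QSS globally from the outset and work with the smaller subcategories. One small addendum: the odd case is not quite vacuous from Balmer--Walter alone, since for the map $\mm{\bar H^0\\\bar H^2}$ you also need $\ker F^0=\ker F^2=0$; this does follow, but only from the per-orbit exact sequences.
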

This theorem applies for instance to the category of finite-dimensional linear representations of an affine algebraic group over an algebraically closed field.  Strictly speaking, we will not use this result in the sequel. We include it here partly because it puts \Cref{thm:W=h} below into perspective, and partly because it has a simple, illustrative proof.

Now let \(X\) be variety over a field of characteristic not two.
For any line bundle \(\lb L\) over \(X\), we can consider the category with duality \((\cat D(X),\star_{\lb L},\omega)\), the derived category of the category of vector bundles over \(X\) equipped with the duality \(\star_{\lb L}\) induce by \(\vb E\mapsto \cat Hom(\vb E,\lb L)\) and the usual double-dual identification \(\omega\).
Then \(\K(\cat D(X))\) is the usual K-group \(\K_0(X)\) of \(X\), equipped with an induced involution \(\star_{\lb L}\) for each line bundle \(\lb L\) over \(X\).
To simplify notation, we define
\begin{alignat*}{5}
  &h^\total(X)          &&:= h^\total(\K_0(X),\star_{\lb O_X})\\
  &h^\total(X,\lb{L})   &&:= h^\total(\K_0(X),\star_{\lb L})
\end{alignat*}
For any two line bundles \(\lb L_1\) and \(\lb L_2\), the ring structure on \(\K_0(X)\) induces a pairing
\[
h^i(X,\lb L_1) \otimes h^j(X,\lb L_2) \to h^{i+j}(X,\lb L_1\otimes L_2)
\]
such that \(h^\total(X)\) is a \(\Z/2\)-graded ring, and such that \(h^\total(X,\lb{L})\) is a graded \(h^\total(X)\)-module (\cf \Cref{tate-rings-and-modules}).

\begin{thm}\label{thm:W=h}
  Let \(X\) be a smooth cellular variety over an algebraically closed field of characteristic not two,
  and let \(\lb L\) be a line bundle over \(X\). Then for \((\mathcal D,\star, \omega):=(\mathcal D(X),\star_{\lb L},\omega)\) as above,
  the maps in Lemma~\ref{lem:well-defined} are isomorphisms.
  In particular, we have isomorphisms\footnote{%
    Recall that we write \(\W^{i,i+2}(X,\lb L)\) for \(\W^i(X,\lb L)\oplus\W^{i+2}(X,\lb L)\).}
  \begin{align*}
    \W^{0,2}(X,\lb L) &\cong h^+(X,\lb L)\\
    \W^{1,3}(X,\lb L) &\cong h^-(X,\lb L)
  \end{align*}
  When \(\lb L\) is trivial, these group isomorphisms assemble to an isomorphism of \(\Z/2\)-graded rings \( \W^\total(X) \cong h^\total(X)\).
  More generally, for any two line bundles \(\lb L_1\) and \(\lb L_2\) over \(X\), these isomorphisms are compatible with the respective pairings in the sense that
  \[\xymatrix{
    \W^{i,i+2}(X,\lb L_1)\otimes \W^{j,j+2}(X,\lb L_2) \ar[r] \ar[d] &  \W^{i+j,i+j+2}(X,\lb L_1\otimes\lb L_2)\ar[d] \\
    h^i(X,\lb L_1)\otimes h^j(X,\lb L_2) \ar[r]     &  h^{i+j}(X,\lb L_1\otimes \lb L_2)
  }\]
  commutes.
\end{thm}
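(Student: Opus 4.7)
The plan is a dévissage along the cellular filtration $X = X^0 \supset X^1 \supset \cdots$, reducing eventually to the case of a point. The broad strategy is to show that both sides of the claimed isomorphism satisfy compatible localization long exact sequences for smooth closed–open decompositions, and that the comparison maps $\bar F^i$, $\bar H^i$ from \Cref{lem:well-defined} are natural with respect to pullback and boundary.

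First I would set up the two localization sequences. On the Witt side, a smooth closed embedding $Z\hookrightarrow X$ of codimension $c$ with open complement $U$ gives Balmer's localization long exact sequence, and the supported piece $\W^*_Z(X,\lb L)$ is identified via the Thom-type isomorphism with $\W^{*-c}(Z,\lb L|_Z\otimes\det(N_{Z/X}))$. On the K-theoretic side, smoothness yields a short exact sequence $0\to\K_0^Z(X)\to\K_0(X)\to\K_0(U)\to 0$, which together with the induced involutions produces a six-term long exact Tate cohomology sequence by the zig-zag argument reviewed in \Cref{sec:twisted-tate}. Naturality of the forgetful and hyperbolic maps assembles these into a commutative ladder.

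The base case is $X = \mathrm{pt}$, reachable from each stratum $X^i - X^{i+1} \cong \A^{n_i}$ by $\A^1$-invariance of both theories. Over an algebraically closed field of characteristic not two, $\K_0(\mathrm{pt}) = \Z$ with trivial involution, so $h^+ = \Z/2$ and $h^- = 0$, matching $\W^{0,2}(\mathrm{pt}) = \Z/2$ and $\W^{1,3}(\mathrm{pt}) = 0$; the map $\bar F^0$ is the evident surjection onto $\Z/2$ and is a bijection, while the remaining components are zero for trivial reasons. An induction on the length of the filtration, with the five-lemma applied to the two localization ladders, then propagates the isomorphism from the bottom cell up to $X$. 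For the multiplicative compatibility claim I would appeal to the fact that $F\colon\GW\to\K$ is a ring map and $H$ is a module map over $F$, so that the induced maps on $\W^\total$ and $h^\total$ automatically respect the external pairings; the commutativity of the compatibility square then follows from naturality of these external products with respect to pullback to the diagonal.

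The main obstacle I anticipate is the twist bookkeeping across the dévissage. Balmer's Thom identification shifts the $\Z/4$ degree by the codimension and twists the duality by $\det(N_{Z/X})$, so I would need to check that the K-theoretic involution transported through the Thom class of $N$ matches this convention, and that the five-lemma argument preserves the $\Z/2$-graded structure rather than collapsing to ungraded information. A secondary subtlety is verifying that the four-term Tate sequence $\cdots \to h^+ \to h^- \to h^+ \to \cdots$ glues with the twelve-term Witt localization in a way that actually matches pairs $(\W^0,\W^2)$ with $h^+$ and $(\W^1,\W^3)$ with $h^-$, rather than some other regrouping; this should follow from the explicit form of the boundary maps on both sides, but it is the delicate point.
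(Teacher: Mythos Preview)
Your approach is essentially the paper's own: d\'evissage along the cellular filtration, comparing the Witt localization sequence with the six-term Tate sequence via the forgetful maps, the Thom identification on the supported piece, the base case at a point via homotopy invariance, and the five-lemma; the multiplicative compatibility via multiplicativity of $F$ is exactly what the paper does as well.

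One point you gloss over does require care: the assertion that ``smoothness yields a short exact sequence $0\to\K_0^Z(X)\to\K_0(X)\to\K_0(U)\to 0$'' is not automatic---the localization sequence only gives $\K_1(U)\to\K_0^Z(X)\to\K_0(X)\to\K_0(U)\to 0$, and injectivity on the left needs an argument. The paper handles this by a rank count specific to cellular varieties (all three groups are free abelian, and the rank of the middle equals the sum of the ranks of the outer two), which is straightforward but should be stated. Apart from this, your anticipated ``obstacles'' (twist/shift bookkeeping under the Thom isomorphism, and matching the boundaries so that $(\W^i,\W^{i+2})$ aligns with $h^i$) are precisely the content of the two auxiliary lemmas the paper proves before assembling the ladder, so your instincts are on target.
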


The remainder of this section is occupied by the proofs of \Cref{thm:W=h-abelian,thm:W=h}.

\begin{proof}[Proof of \Cref{thm:W=h-abelian}]
  As two is assumed to be invertible, we can identify \(\W^0(\cat D)\) and \(\W^2(\cat D)\) with the usual Witt groups of symmetric and anti-symmetric objects \(\W^0(\cat A)\) and \(\W^2(\cat A)\) \cite{Balmer:TWGII}*{Theorem~4.3}, and likewise for the \GrothendieckWitt groups \(\GW^0(\cat D)\) and \(\GW^2(\cat D)\). Moreover, by Proposition~5.2 of \cite{BalmerWalter:GWSS}, \(\W^1(\cat D) = \W^3(\cat D) = 0\).

  \newcommand{\myset}{\mathfrak S}
  Choose a set \(\myset\) of representatives of the simple objects of \(\cat A\).
  As \(k\) is algebraically closed, every such object has \(k\) as endomorphism ring.  Therefore, every self-dual simple object is either symmetric or anti-symmetric, exclusively \citelist{\cite{QSS}*{Prop.~2.5 (1)}\cite{CalmesHornbostel:reductive}*{Lemma~1.21}}.  Let \(\myset_+, \myset_- \subset \myset\) be the subsets of symmetric and anti-symmetric objects, and let \(\myset_0\) contain one object of each pair \((S,S^\dual)\) of non-self-dual objects of \(\myset\).
Given any object \(A\) of \(\cat A\), let \(\cat A_A\) denote the full subcategory on objects isomorphic to direct summands of finite direct sums of copies of \(A\).  The inclusions of \(\cat A_S\) and \(\cat A_{S\oplus S^\dual}\) into \(\cat A\) induce isomorphisms
\begin{align*}
  \K(\cat A) &\cong \bigoplus_{S\in\myset_+} \K(\cat A_S) \oplus \bigoplus_{S\in\myset_-} \K(\cat A_S) \oplus \bigoplus_{S\in\myset_0} \K(\cat A_{S\oplus S^\dual}).
\end{align*}
Moreover, as the duality on \(\cat A\) restricts to \(\cat A_S\) (for self-dual \(S\)) and to \(\cat A_{S\oplus S^\dual}\) (for arbitrary \(S\)), we have analogous decompositions for \(\GW^0(\cat A)\), \(\GW^2(\cat A)\), \(\W^0(\cat A)\) and \(\W^2(\cat A)\) \citelist{\cite{QSS}*{Theorem~6.10}\cite{CalmesHornbostel:reductive}*{Cor.~1.14}}.
Moreover, these decompositions are compatible with the hyperbolic and forgetful maps \(F^i\) and \(H^i\).
It follows that we have such decompositions for all \(\GW^i(\cat D)\) and \(\W^i(\cat D)\), with each category \(\cat A_A\) replaced by its derived category \(\cat D_A\).
Thus, it suffices to prove the \namecref{thm:W=h-abelian} for each category \(\cat A_S\) with \(S\in \myset_+\) or \(\myset_-\) and each category \(\cat A_{S\oplus S^\dual}\) with \(S\in\myset_0\).

For \(S\in\myset_+\), we have \(\K(\cat A_S) \cong \K(k)\), \(\GW^0(\cat A_S) \cong \GW^0(k)\) and \(\GW^2(\cat A_S) \cong \GW^2(k)\) \cite{QSS}*{Prop.~2.4}.\footnote{%
We use the common shorthand \(\K(k)\) for the K-group of the category of finite-dimensional \(k\)-vector spaces, or, phrased geometrically, for the K-group of \(\mathrm{Spec}(k)\).  The same convention is used for Witt and \GrothendieckWitt groups.}
The groups \(\K(k)\) and \(\GW^2(k)\) are equal to \(\Z\) for any field \(k\).  As \(k\) is algebraically closed, the same is true for \(\GW^0(k)\).  The proof of Proposition~2.4 in \cite{QSS} also yields explicit generators for each of these groups:  if we choose some fixed symmetric form \(\sigma\) on \(S\), we can write:
\begin{align*}
  \K(\cat A_S) &= \Z\cdot [S]\\
  \GW^0(\cat A_S) &= \Z\cdot [S,\sigma]\\
  \GW^2(\cat A_S) &= \Z\cdot [H^2S]
\end{align*}
The exact sequences \eqref{eq:Karoubi-exact-sequence} therefore take the following form:
\begin{alignat*}{11}
  \Z/2&[H^{-1}S] &&\xrightarrow{F^{-1}} &\Z&[S] \xrightarrow{H^0} &\Z&[S,\sigma] &&\to &\Z/2&[S,\sigma] \to &&\quad 0\\
  \Z&[S,\sigma] &&\xrightarrow{F^0} &\Z&[S] \xrightarrow{H^1} && 0            &&\to &0&                    && \\
  &0            &&\xrightarrow{F^1}&\Z&[S] \xrightarrow{H^2} &\Z&[H^2S]   &&\to &0&                      && \\
  \Z&[H^2S]   &&\xrightarrow{F^2} &\Z&[S] \xrightarrow{H^{-1}} &\Z/2&[H^{-1}S]  &&\to &0&
\end{alignat*}
As the involution on \(\Z[S]\) is trivial, \(h^+(\Z[S])= \Z/2[S]\), and the claims that the maps
\((\bar F^i, \bar F^{i+2})\) and \(\mm{\bar H^{i}\\\bar H^{i+2}}\)
are isomorphisms are easily checked.

The proof for \(S\in\myset_-\) is analogous.  The involution on \(\K(\cat A_S)\) is again trivial, and the exact sequences \eqref{eq:Karoubi-exact-sequence} take the following form, where \(\sigma\) is some fixed anti-symmetric form on \(S\):
\begin{alignat*}{11}
  &0            &&\xrightarrow{F^{-1}}&\Z&[S] \xrightarrow{H^0} &\Z&[H^0S]   &&\to &0&                      && \\
  \Z&[H^0S]   &&\xrightarrow{F^0} &\Z&[S] \xrightarrow{H^1} &\Z/2&[H^1S]  &&\to &0&                    \\
  \Z/2&[H^1S] &&\xrightarrow{F^1} &\Z&[S] \xrightarrow{H^2} &\Z&[S,\sigma] &&\to &\Z/2&[S,\sigma] \to && \quad 0\\
  \Z&[S,\sigma] &&\xrightarrow{F^2} &\Z&[S] \xrightarrow{H^{-1}} && 0            &&\to &0&                    &&
\end{alignat*}

Finally, for \(\cat A_{S\oplus S^\dual}\) with \(S\in\myset_0\), all groups involved in \Cref{lem:well-defined} vanish.  In slightly more detail, the \GrothendieckWitt groups of \(\cat A_{S\oplus S^\dual}\) can be identified with the \GrothendieckWitt groups of the ring with involution \((k\times k, \circ)\), where \(\circ\) interchanges the two factors.  Any finitely-generated module over \(k\times k\) can be written as \(V_1\oplus V_2\) for \(k\)-vector spaces \(V_1\) and \(V_2\), with the first factor of \(k\times k\) acting trivially on \(V_2\) and the second factor acting trivially on \(V_1\).
In this notation, the dual of \(V_1\oplus V_2\) is \(V_2^\dual\oplus V_1^\dual\), where \((-)^\dual\) denotes the usual duality on vector spaces.  It follows that any (anti-)symmetric module over \((k\times k,\circ)\) is necessarily hyperbolic \cite{Karoubi:fondamental}*{\S~1.2}.
Thus, in this case the exact sequences \eqref{eq:Karoubi-exact-sequence} take the following form:
\begin{alignat*}{11}
  \Z&[H^{-1}S] &&\xhookrightarrow{F^{-1}}&\Z&[S]\oplus\Z[S^\dual]\xrightarrow{H^0}   &\Z&[H^0S]     &&\to\quad&0 \\
  \Z&[H^0S]    &&\xhookrightarrow{F^0}   &\Z&[S]\oplus\Z[S^\dual]\xrightarrow{H^1}   &\Z&[H^1S]     &&\to &0 \\
  \Z&[H^1S]    &&\xhookrightarrow{F^1}   &\Z&[S]\oplus\Z[S^\dual]\xrightarrow{H^2}   &\Z&[H^2S]     &&\to &0 \\
  \Z&[H^2S]    &&\xhookrightarrow{F^2}   &\Z&[S]\oplus\Z[S^\dual]\xrightarrow{H^{-1}} &\Z&[H^{-1}S] &&\to &0
\end{alignat*}
In particular, all the maps \(F^i\) are injective and all Witt groups are zero, in agreement with the fact that \(h^\total(\Z[S]\oplus\Z[S^\dual])\) vanishes.
\end{proof}

For the proof of \Cref{thm:W=h}, we use a couple of lemmas.
\begin{lem}\label{lem:boundary}
  Let \( \cat{A} \xrightarrow{i} \cat{B} \xrightarrow{r} \cat{C} \) be a short exact sequence of triangulated categories with duality with the property that
  \[
  0 \rightarrow \K(\cat{A}) \rightarrow \K(\cat{B}) \rightarrow \K(\cat{C}) \rightarrow 0
  \]
  is exact.
  Then the forgetful functors \( F^i \) commute with the boundary maps in the associated long exact sequence of Witt groups and the long exact Tate cohomology sequence, \ie the following square commutes:
  \[
  \xymatrix{
    {\W^i(\cat{C})} \ar[r]^{\partial}\ar[d]^{F^i} & \W^{i+1}(\cat{A})\ar[d]^{F^{i+1}} \\
    {h^i(\cat{C})}  \ar[r]^{\partial}            & {h^{i+1}(\cat{A})}
  }
  \]
\end{lem}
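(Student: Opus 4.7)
The plan is to compute both boundary maps explicitly on a representative and compare their underlying K-theory classes.

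Starting on the Witt side, I recall Balmer's construction of \(\partial\colon\W^i(\cat C)\to\W^{i+1}(\cat A)\) associated with a short exact sequence of triangulated categories with duality. Given \((X,\phi)\in\W^i(\cat C)\), one picks a lift \(\tilde X\in\cat B\) of \(X\) together with any (not necessarily symmetric) morphism \(\tilde\phi\colon\tilde X\to\tilde X^\star[i]\) in \(\cat B\) that restricts to \(\phi\) in \(\cat C\).  Since \(\phi\) is an isomorphism in \(\cat C\), the cone of \(\tilde\phi\) is supported on \(\cat A\), and Balmer endows a suitable shift of this cone with a symmetric structure of degree \(i+1\); this object represents \(\partial[X,\phi]\).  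Its underlying K-theory class, viewed as an element of \(\ker(\K(\cat B)\to\K(\cat C))\cong\K(\cat A)\), equals
\[
[\tilde X^\star[i]]-[\tilde X] \;=\; (-1)^i\,\star[\tilde X]-[\tilde X],
\]
up to an overall sign coming from the shift in Balmer's construction.  Consequently, \(F^{i+1}\partial[X,\phi]\) is the class of this element in \(h^{i+1}(\K(\cat A))\).

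On the Tate side, the hypothesis that the K-theory sequence is short exact means we have a short exact sequence of \(\Z/2\)-graded complexes with compatible involutions.  The standard snake-lemma recipe therefore computes \(\partial F^i[X,\phi]=\partial[X]\) as follows: lift the cocycle \([X]\in\ker(\id-(-1)^i\star)\subseteq\K(\cat C)\) to \([\tilde X]\in\K(\cat B)\) and apply the Tate differential to obtain
\[
[\tilde X]-(-1)^i\,\star[\tilde X]\;\in\;\ker(\K(\cat B)\to\K(\cat C))\cong\K(\cat A),
\]
whose class in \(h^{i+1}(\K(\cat A))\) is \(\partial[X]\) by definition.

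Comparing the two outputs, both paths in the square produce (up to sign) the canonical element \([\tilde X]-(-1)^i\star[\tilde X]\) under the identification \(\ker(\K(\cat B)\to\K(\cat C))\cong\K(\cat A)\), so commutativity reduces to matching signs.  The main obstacle will be precisely this sign bookkeeping: one has to pin down the shift convention used in Balmer's construction of the symmetric boundary form and verify it is consistent with the sign convention in the Tate differential.  Once that is done, the square commutes by the observation that both \(F^{i+1}\partial\) and \(\partial F^i\) instantiate the same connecting homomorphism for the short exact K-theory sequence \(0\to\K(\cat A)\to\K(\cat B)\to\K(\cat C)\to 0\).
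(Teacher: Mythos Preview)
Your proposal is correct and follows essentially the same route as the paper: compute \(F^{i+1}\partial\) as the K-class of the cone of a lift, compute \(\partial F^i\) via the snake-lemma recipe, and compare using the injectivity of \(\K(\cat A)\hookrightarrow\K(\cat B)\).  The sign bookkeeping you flag as the main obstacle is in fact a non-issue, since Tate cohomology is always 2-torsion; the paper accordingly asserts that \([\cone(\beta)]\) and \([B]-(-1)^i[B^\star]\) ``agree'' without further comment.
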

\begin{proof}
  The boundary map on Witt groups is defined as follows \cite{Balmer:TWGI}*{Def.~4.16}:\footnote{C.f. \cite{BG:Koszul}*{\S~2} for a concise summary in the case of vector bundles over varieties.}
  Any element of \( \W^i(\cat{C}) \) can be represented by the image \(r(B,\beta)\) of some \(i\)-symmetric pair \( (B,\beta) \) in \( \cat{B} \).
  Then the cone of \(\beta\colon B\to B^\dual[i]\) is isomorphic to zero in \( \cat{C} \), so it lies in \(\cat{A}\). It may be equipped with some \( (i+1) \)-symmetric form \( \gamma \), and we have \( \partial[r(B,\beta)] := [\cone(\beta), \gamma] \). In particular, \( F\partial[r(B,\beta)] = [\cone(\beta)] \).

  The boundary map from \( h^i(\cat{C}) \) to \( h^{i+1}(\cat{A}) \), on the other hand, sends \( [rB] \) to \( [B] - (-1)^{i}[B^\dual]\), which we may view as an element of \( \K(\cat{A}) \) (\cf \Cref{tate-exact-sequence}). This element agrees with \( [\cone(\beta)] \) in \( \K(\cat{B}) \), so we conclude using the injectivity of \mbox{\( \K(\cat{A}) \rightarrow \K(\cat{B}) \)}.
\end{proof}

\begin{lem}\label{lem:Thom}
  Let \( Z \hookrightarrow X \) be a closed embedding of smooth quasi-projective varieties over a field of characteristic not two, and let \(\lb L\) be a line bundle over \(X\).  The forgetful functor commutes with the Thom isomorphism of \cite{Nenashev:Gysin}, \ie we have commutative diagrams of the form
  \[
  \xymatrix{
    {\W^{i-c}(Z,\lb{L}|_Z\otimes\det{\vb{N}})} \ar[r]_-{\cong}\ar[d]^{F^{i-c}} & {\W^i_Z(X,\lb{L})} \ar[d]^{F^i} \\
    {h^{i-c}(Z,\lb{L}|_Z\otimes\det{\vb{N}})} \ar[r]_-{\cong}                & {h^i_Z(X,\lb{L})}
  }
  \]
  Here, \( c \) and \(\vb N\) are the codimension and the normal bundle of  \(Z\hookrightarrow X\).  We have written \(h^i_Z(X,\lb L)\) for the Tate cohomology  \(h^i(\K^Z_0(X),\star_{\lb L})\).
\end{lem}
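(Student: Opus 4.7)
The approach is to reduce to the case of the zero section of a vector bundle and then to identify Nenashev's Thom isomorphism explicitly as multiplication by a symmetric Koszul Thom class whose forgetful image is the standard K-theoretic Thom class.

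For the reduction, I would use deformation to the normal cone to factor the Thom isomorphism for a general regular closed embedding \(Z\hookrightarrow X\) as a composition: first the Thom isomorphism for the zero section \(s\colon Z\hookrightarrow \vb N\) of the normal bundle \(\vb N\), and then a specialisation isomorphism
\[
\W^i_Z(\vb N, p^{\dual}(\lb L|_Z)) \xrightarrow{\cong} \W^i_Z(X,\lb L),
\]
where \(p\colon \vb N\to Z\) is the projection. The specialisation is constructed as a composition of boundary maps attached to short exact sequences of triangulated categories with duality all of whose associated K-theory sequences are short exact, so that \Cref{lem:boundary} applies at each step. The same sequences define the corresponding specialisation on Tate cohomology, so \Cref{lem:boundary} reduces the assertion to the zero-section case.

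For the zero section, I would invoke the fact that Nenashev's construction presents the Thom isomorphism as multiplication with a canonical symmetric Koszul class \(\mathrm{th}(\vb N)\in\W^c_Z(\vb N, p^{\dual}\det\vb N)\). Its image under the forgetful map is the Koszul class \(\sum_i (-1)^i[\Lambda^i p^{\dual}\vb N^{\dual}]\in\K_0^Z(\vb N)\), whose multiplication action induces the classical K-theoretic Thom isomorphism and hence, after passage to Tate cohomology, the Thom isomorphism on \(h^\total_Z\). Since the forgetful map is multiplicative with respect to the pairings
\[
\W^j(Z,\lb M)\otimes \W^k_Z(\vb N,\lb M') \to \W^{j+k}_Z(\vb N,\lb M\otimes\lb M')
\]
and the analogous pairing on K-theory (and therefore on Tate cohomology), commutativity in the zero-section case follows immediately from the identity \(F(\alpha\cdot\mathrm{th}(\vb N)) = F(\alpha)\cdot F(\mathrm{th}(\vb N))\).

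The main obstacle is the careful bookkeeping required to make the identification of Nenashev's Thom map with ``multiplication by the Koszul class'' precise: one has to track the shift by \(c\), the twist by \(\det\vb N\), and the supports in \(Z\) simultaneously, and verify that the comparison map \(F^\total\) respects all of these structures. Once this is in place, the whole argument is essentially formal, since the forgetful functor simply discards the symmetric-form data from an underlying derived-categorical construction that is applied identically on the Witt-theoretic and K-theoretic sides.
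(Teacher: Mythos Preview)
Your proposal is correct and follows essentially the same route as the paper: both arguments hinge on the fact that Nenashev's Thom isomorphism is given by multiplication with the symmetric Koszul class \(\kappa=[K(\vb N),\Theta(\vb N)]\in\GW^c_Z(\vb N,p^*\det\vb N^\dual)\), whose forgetful image \(F^c(\kappa)\) induces the classical K-theoretic Thom isomorphism, so that multiplicativity of \(F\) yields the desired commuting square already at the level of \GrothendieckWitt and K-groups.

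The one difference is organisational. The paper does not separate out a deformation-to-the-normal-cone step; it simply points to Nenashev's construction and observes that the Thom isomorphism there is ``multiplication by \(\kappa\) followed by \(p^*\)'', so that naturality and multiplicativity of \(F\) finish the argument in one stroke. Your two-step decomposition---first reduce to the zero section via specialisation, then handle the zero section by the Koszul class---is a legitimate unpacking of the same construction, and invoking \Cref{lem:boundary} for the specialisation step is a reasonable way to justify compatibility with \(F\). Your version has the advantage of making the passage from \(\W^i_Z(\vb N,-)\) to \(\W^i_Z(X,-)\) explicit, whereas the paper leaves this absorbed in the reference to \cite{Nenashev:Gysin}; the paper's version is shorter because it treats Nenashev's paper as a black box delivering the multiplicative description directly.
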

\begin{proof}
  A review of the construction of the Thom isomorphism shows that the claim already holds on the level of \GrothendieckWitt and K-groups:
  Let \( K(\vb{N}) \) denote the Koszul complex of \( \vb{N} \) as described in \cite{Nenashev:Gysin}*{\S~2}.  This is a complex of vector bundles over the affine bundle associated with \(\vb{N}\),  with cohomology supported on the zero section \( Z\). It carries a canonical \(c\)-symmetric form
  \[
  \Theta(\vb{N})\colon K(\vb{N})\xrightarrow{\simeq} K(\vb{N})^\dual[c]\otimes p^*\det{\vb{N}^\dual},
  \]
  where \( p\) denotes the bundle projection \(\vb{N}\twoheadrightarrow Z\).
  One thus has elements \mbox{\( \kappa := [K(\vb{N}),\Theta(\vb{N})] \)} in \( \GW^c_Z(\vb{N},\det{p^*\vb{N}^\dual}) \) and \( F^c(\kappa) \) in \( K_0^Z(\vb{N}) \). Multiplication with \( \kappa \) induces group isomorphisms
  \[
  \GW^{i-c}(Z,\lb{L}\otimes\det{\vb{N}}) \xrightarrow{\quad\cong\quad} \GW^i_Z(\vb{N},p^*\lb{L})
  \]
  for all line bundles \( \lb{L} \) over \( Z\), while multiplication with \( F^c(\kappa) \) induces the usual Thom-isomorphism of K-groups \( \K_0(Z) \cong \K_0^Z(X) \).
  If we want to view this as an isomorphism of groups with involution, we need to take care both of the sign and the twist of the involution. So let \( \K_0(X,-\lb{L}) \) denote \( \K_0(X) \) equipped with minus the \( \lb{L}\)-twisted duality (for any \( X \) and any \( \lb{L} \)). Then multiplication by \( F^c(\kappa) \) induces isomorphisms
  \[
  \K_0(Z,(-1)^{i-c}\lb{L}\otimes\det{\vb{N}}) \xrightarrow{\quad\cong\quad} \K_0^Z(\vb{N},(-1)^ip^*\lb{L})
  \]
  and, since \( F \) is multiplicative, we have commutative diagrams
  \[
  \xymatrix{
    {\GW^{i-c}(Z,\lb{L}\otimes\det{\vb{N}})}\ar[r]^-{\cong}_-{\cdot\kappa}\ar[d]^{F^{i-c}} & {\GW^i_Z(\vb{N},p^*\lb{L})} \ar[d]^{F^i} \\
    {\K_0(Z,(-1)^{i-c}\lb{L}\otimes\det{\vb{N}})} \ar[r]^-{\cong}_-{\cdot F^c{\kappa}} & {\K_0^Z(\vb{N},(-1)^ip^*\lb{L})}
  }
  \]
  The Thom-isomorphisms in the lemma are obtained by composing the horizontal arrows with \( p^* \). Thus, the lemma follows from the naturality of \( F\) and the observation that \( h^j(\K_0(X,(-1)^i\lb{L})) = h^{j+i}(X,\lb{L}) \).
\end{proof}

\begin{proof}[Proof of Theorem~\ref{thm:W=h}]
  By Lemma~\ref{lem:Fiso-Hiso} below, it suffices to show that the maps \((\bar F^i, \bar F^{i+2})\) are isomorphisms. This may be proved by induction over the cells of \( X \). Indeed, using \Cref{thm:W=h-abelian} or otherwise, we see that the claim is true for \( X = \mathrm{Spec}(k) \), where \( k \) is an algebraically closed field.  Thus, by homotopy invariance, the claim holds for an arbitrary cell \( \A^n_k \). Now  let \( X = U_0 \supset U_1 \supset U_2 \cdots \supset U_N=\emptyset \) be a filtration of \( X \) by open subsets as in \cite{Me:WCCV}*{proof of Thm~2.6}, \ie such that the complement of each \( U_{k+1} \) in \( U_k \) is a closed cell \( Z_k \).  For each \( k \) we have a short exact sequence of triangulated categories
  \begin{equation}
  0 \rightarrow \cat{D}^b_{Z_k}(U_k) \rightarrow \cat{D}^b(U_k) \rightarrow \cat{D}^b(U_{k+1}) \rightarrow 0.
  \end{equation}
  We claim that the first map in the associated sequence of K-groups is injective, \ie that
 \begin{equation}\label{eq:K-ses}
  0 \rightarrow \K_0^{Z_k}(U_k) \rightarrow \K_0(U_k) \rightarrow \K_0(U_{k+1}) \rightarrow 0
\end{equation}
is still exact.  Indeed, note that all three K-groups are free abelian and of finite rank.  The first, \(\K_0^{Z_k}(U_k)\), can be identified via dévissage with the K-group of the cell \(Z_k\), so it is free abelian of rank one.  The other two are the K-groups of the smooth cellular varieties \(U_k\) and \(U_{k+1}\), and the K-group of such a variety is free abelian of rank equal to the number of cells.\footnote{This well-known fact may be checked, for example, by passing to coherent K-groups and noting that the localization sequences associated with the canonical cellular filtration by closed subsets split.  With slightly more effort than we care to take here, one may also deduce that the above localization sequences of K-groups associated with the open filtration split in all degrees, not just in degree zero.}  Thus, in the above sequence, the rank of the central group is equal to the sum of the ranks of the other two groups, and the claim follows.

  If we choose the usual dualities on the above categories (or if we twist the dualities by the restriction of \( \lb{L} \) in each case) this sequence becomes a short exact sequence of triangulated categories with duality. By Lemma~\ref{lem:boundary}, the associated long exact sequence of Witt groups may be compared to the long exact Tate cohomology sequence induced by \eqref{eq:K-ses} via a commutative ladder diagram:
\begin{adjustwidth}{-5cm}{-5cm}
  \[
  \xymatrix@C=7pt{
    {\cdots} \ar[r]
    & {\W^{i-1,i+1}(U_{k+1})}   \ar[r]^-{\partial}\ar[d]
    & {\W^{i,i+2}_{Z_k}(U_k)}  \ar[r]\ar[d]
    & {\W^{i,i+2}(U_k)}           \ar[r]\ar[d]
    & {\W^{i,i+2}(U_{k+1})}     \ar[r]^-{\partial}\ar[d]
    & {\W^{i+1,i+3}_{Z_k}(U_k)} \ar[r]\ar[d]
    & {\cdots} \\
    {\cdots} \ar[r]
    & {h^{i-1}(U_{k+1})} \ar[r]^-{\partial}
    & {h^i_{Z_k}(U_k)} \ar[r]
    & {h^i(U_k)} \ar[r]
    & {h^i(U_{k+1})} \ar[r]^-{\partial}
    & {h^{i+1}_{Z_k}(U_k)} \ar[r]
    & {\cdots}
  }
  \]
\end{adjustwidth}
  By (backward) induction, we may assume that the vertical maps to \( h^\total(U_{k+1}) \) are isomorphisms. By Lemma~\ref{lem:Thom} the maps to \( h^\total_{Z_k}(U_k) \) may be identified with the maps
  \[
  \W^{i-c,i-c+2}(Z_k) \longrightarrow h^{i-c}(Z_k),
  \]
  which are isomorphisms since \( Z_k\cong\A^n\) for some \(n\). So we may conclude that the vertical maps to \( h^\total(U_k) \) are also isomorphisms.

Finally, the claim concerning the multiplicative structure follows from the corresponding claim for the forgetful functors, \ie from the fact that
\[\xymatrix{
  \GW^i(X,\lb L_1)\otimes \GW^j(X,\lb L_2) \ar[r]^-{\cdot} \ar[d]^{F^i\otimes F^j}&  \GW^{i+j}(X,\lb L_1\otimes\lb L_2)\ar[d]^{F^{i+j}} \\
  \K_0(X)\otimes \K_0(X) \ar[r]^-{\cdot}     &  \K_0(X)
}\]
commutes.
\end{proof}

\begin{lem}\label{lem:Fiso-Hiso}
  Let \( (\cat D,\star,\omega) \) be as in \Cref{lem:well-defined}.
  If \( (\bar F^i,\bar F^{i+2}) \) is an isomorphism for all (\ie for both even and odd) values of \( i \), then so is \( \mm{\bar H^i\\\bar H^{i+2}} \).
\end{lem}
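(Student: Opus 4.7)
The plan is to exploit the observation from Lemma~\ref{lem:well-defined} that the composition
\[
\W^{i,i+2}(\cat D) \xrightarrow{(\bar F^i,\bar F^{i+2})} h^i(\K(\cat D)) \xrightarrow{\mm{\bar H^{i-1}\\\bar H^{i+1}}} \ker F^{i-1}\oplus\ker F^{i+1}
\]
is the diagonal map $\mm{\bar H^{i-1}\bar F^i & 0 \\ 0 & \bar H^{i+1}\bar F^{i+2}}$. Since the first arrow is an isomorphism by hypothesis, it suffices to prove that each of the two diagonal entries $\bar H^{i-1}\bar F^i$ and $\bar H^{i+1}\bar F^{i+2}$ is itself an isomorphism.

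For surjectivity I would invoke the hypothesis for the \emph{opposite} parity: $(\bar F^{i-1},\bar F^{i+1})$ is an isomorphism, so in particular $\bar F^{i-1}\colon\W^{i-1}(\cat D)\to h^{i-1}(\K(\cat D))$ is individually injective. This is enough to make $\bar H^{i-1}\colon h^i(\K(\cat D))\to\ker F^{i-1}$ surjective: given $\alpha\in\ker F^{i-1}\subseteq\GW^{i-1}(\cat D)$, its class $[\alpha]\in\W^{i-1}(\cat D)$ satisfies $\bar F^{i-1}[\alpha]=[F^{i-1}(\alpha)]=0$, hence $[\alpha]=0$ by injectivity; exactness of \eqref{eq:Karoubi-exact-sequence} then supplies $y\in\K(\cat D)$ with $\alpha=H^{i-1}(y)$, and the identity $F^{i-1}H^{i-1}(y)=y+(-1)^{i-1}\star y$ combined with $F^{i-1}(\alpha)=0$ forces $y\in\ker(\id-(-1)^i\star)$. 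Thus $[y]\in h^i(\K(\cat D))$ is a pre-image of $\alpha$.

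For injectivity, a short computation with the Karoubi relations gives $\ker\bar H^{i-1}=\im\bar F^{i-2}$ and $\ker\bar H^{i+1}=\im\bar F^i$ inside $h^i(\K(\cat D))$; by $4$-periodicity of the index, $\im\bar F^{i-2}=\im\bar F^{i+2}$. The hypothesis that $(\bar F^i,\bar F^{i+2})$ is an isomorphism now yields an internal direct sum decomposition $h^i(\K(\cat D))=\im\bar F^i\oplus\im\bar F^{i+2}$, with respect to which $\bar H^{i-1}$ kills the second summand. Its restriction to the first summand therefore has trivial kernel, and combined with the surjectivity established above it is bijective. Pulling back along the isomorphism $\bar F^i\colon\W^i(\cat D)\xrightarrow{\cong}\im\bar F^i$ yields the desired isomorphism $\bar H^{i-1}\bar F^i$, and a symmetric argument handles $\bar H^{i+1}\bar F^{i+2}$.

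The only substantive obstacle is the sign-sensitive verification of the kernel identifications $\ker\bar H^{i\pm 1}=\im\bar F^{i\mp 2}$; each reduces to a short bookkeeping exercise with the four identities $F^jH^j=\id+(-1)^j\star$, $H^jF^{j-1}=0$, $\star F^j=(-1)^jF^j$ and $H^j\star=(-1)^jH^j$, together with the definition of $h^i$ as $\ker(\id-(-1)^i\star)/\im(\id+(-1)^i\star)$, but the signs must be tracked carefully.
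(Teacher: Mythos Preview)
Your argument is correct. The route differs from the paper's in organisation rather than in substance. The paper proves injectivity and surjectivity of \(\mm{\bar H^0\\\bar H^2}\colon h^-\to\ker F^0\oplus\ker F^2\) by a direct element chase: for injectivity it lifts an element of the kernel through the exact sequence \eqref{eq:Karoubi-exact-sequence} and invokes injectivity of \((\bar F^{-1},\bar F^1)\); for surjectivity it uses injectivity of \((\bar F^0,\bar F^2)\) together with surjectivity of \((\bar F^{-1},\bar F^1)\). You instead exploit the diagonal form of the composite from \Cref{lem:well-defined} and argue component-wise, establishing the clean structural fact \(\ker\bar H^{i-1}=\im\bar F^{i+2}\) and combining it with the direct-sum splitting \(h^i=\im\bar F^i\oplus\im\bar F^{i+2}\) coming from the same-parity hypothesis. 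Your approach thus trades one use of the opposite-parity hypothesis (in the paper's injectivity step) for one use of the same-parity hypothesis, and it yields the small bonus that each diagonal entry \(\bar H^{i-1}\bar F^i\colon\W^i\to\ker F^{i-1}\) is individually an isomorphism, a fact the paper's proof does not isolate explicitly. The kernel identification you defer to ``bookkeeping'' is indeed routine: \(\bar H^{i-1}\bar F^{i-2}=0\) gives one inclusion, and exactness of \eqref{eq:Karoubi-exact-sequence} at \(\K(\cat D)\) gives the other.
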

\begin{proof}
  To see that \( \mm{\bar H^0\\\bar H^2} \) is injective, suppose that \( H^0(x) = H^2(x) = 0 \) for some element \( x \in \K(\cat{D}) \) which is  anti-self-dual (\ie satisfies \(x^\star = -x\)) .  Then by the exactness of sequence~\eqref{eq:Karoubi-exact-sequence} there exist elements \( y_{-1}\in\GW^{-1}(\cat{D}) \) and \( y_1 \in\GW^1(\cat{D}) \) such that \( x = F^{-1}(y_{-1}) = F^1(y_1) \). Injectivity of \( (\bar F^{-1}, \bar F^1) \) then implies that \( (y_{-1},-y_1) \) is zero in \(\W^{-1}(\cat{D})\oplus\W^1(\cat{D})\). By definition of the Witt group, this means that we may find elements \( z_{-1},z_1 \in \K(\cat{D}) \) such that \(y_{-1}=H^{-1}(z_{-1}) \) and \( y_1=H^1(z_1) \). In particular, \( x = F^1(H^1(z_1)) = z_1 - z_1^*\). So \( x \) is zero in \( h^-(\cat{D}) \).

  In order to see that \( \bar H^0 \) surjects onto \( \ker F^0 \), take an arbitrary element \( x \in \GW^0(\cat D) \) such that \( F^0(x) = 0\). Injectivity of \( (\bar F^0, \bar F^2) \) then implies that \( x \) is zero in \( \W^0(\cat D) \). So \( x = H^0(z) \) for some \( z \in \K(\cat{D}) \). Since \( z + z^* = F^0(x) = 0 \), we may consider the class of \( z \) in \( h^-(\cat{D}) \). Surjectivity of \( (\bar F^{-1}, \bar F^1) \) then implies the existence of elements \( y_{-1}\in \W^{-1}(\cat D) \) and \( y_1\in \W^1(\cat D) \) such that \( z = \bar F^{-1}(y_{-1}) + \bar F^1(y_1) \). Applying \( \bar H^0\), we find that \( x = \bar H^0 \bar F^1(y_1) \). Thus, \( (x,0) = (\bar H^0, \bar H^2)(F^1(y_1)) \). Similarly, \( \bar H^2 \) surjects onto \( \ker F^2 \), so \( \mm{\bar H^0\\\bar H^2} \) is surjective.

  The proof that \( \mm{\bar H^{-1}\\\bar H^1} \) is an isomorphism is analogous.
\end{proof}

\subsection{K-theory of flag varieties}\label{sec:K-of-flags}
\Cref{thm:W=h} tells us that the Witt groups of a flag variety \(G/P\) are closely related to the Tate cohomology of its algebraic K-group \(\K_0(G/P)\). The reason why this is useful is that this K-group has a nice description in terms of the representation rings \(\Rep(G)\) and \(\Rep(P)\) of \(G\) and \(P\). More precisely, Panin \cite{Panin:TwistedFlags} has proved the following algebraic variant of a theorem of Hodgkin \cite{Hodgkin}*{Lemma~9.2}:
\begin{thmPanin}\label{thm:Hodgkin}
Let \(G\) be a simply connected semisimple algebraic group with a parabolic subgroup \(P\).
Then we have a ring isomorphism
\[
\K_0(G/P) \cong \factor{\Rep(P)}{\ideal a}
\]
where \(\ideal a\subset \Rep(P)\) is the ideal generated by restrictions of rank zero classes in \(\Rep(G)\).
\end{thmPanin}
This isomorphism is very geometric:
It is induced by the morphism \(\K_0(G/P)\leftarrow\Rep(P)\) that sends a representation \(V\) of \(P\) to the associated vector bundle \(G\times_P V\) over \(G/P\) (\cf \cite{Panin:TwistedFlags}*{\S~1}).
In particular, it respects the involutions on both sides induced by the usual duality on vector bundles and representations,
and it restricts to the isomorphism
\begin{equation}\label{eq:Pic=X}
\Pic(G/P) \cong X^*(\Rep(P))
\end{equation}
\cite{MerkurjevTignol}*{Prop.~2.3} under which the line bundle called \(\lb L_\beta\) in \Cref{sec:vanishing} corresponds to the character \(\omega_\beta\) of \(\Rep(P)\) (see \Cref{sec:rep-of-P}).

A key ingredient in Panin's proof is the following theorem of Steinberg \cite{Steinberg:Pittie}, which we will also need later:
\begin{thmSteinberg}\label{thm:Steinberg}
  The representation ring \(\Rep(P)\) of a parabolic subgroup of a simply connected semisimple algebraic group \(G\) is finite and free as a \(\Rep(G)\)-module.
\end{thmSteinberg}

The remainder of the article is now easily outlined:
we work out an explicit description of \(\Rep(P)\) as a ring with involution,
we compute its Tate cohomology, and finally we translate everything back to Witt groups.

\encouragepagebreak
\section{Representation rings}
In this section, we analyse the representation ring of a parabolic subgroup and the involution induced by dualizing representations.

To put the final result in \Cref{sec:involution-on-RepP} into perspective, recall that the representation ring of any simply connected semisimple algebraic group can be written as a polynomial ring.  We may even choose a set of polynomial generators which is preserved by the involution.
The representation ring \(\Rep(P)\) of a parabolic subgroup is only slightly more complicated as a ring:  it may be decomposed into a tensor product of a polynomial ring and a ring of Laurent polynomials. However, in general the involution does not respect this decomposition:  while the involution does restrict to an involution of the Laurent part (on which it is given by multiplicative inversion), the dual of a polynomial generator of \(\Rep(P)\) is generally a product of a `polynomial factor' and a non-trivial `Laurent factor'.

Throughout this section, the ground field may be arbitrary.

\subsection{Representation rings of reductive groups}\label{rep-of-reductive}
Before specializing to (Levi subgroups of) parabolic groups, we recall the general description of the representation ring of a split reductive group. So let \(G\) be a split reductive group over a field, with split maximal torus \(T\). We use the following notation:
\begin{quote}
\(X^* := X^*(T)\) the group of characters of \(T\) \\
\(X_* := X_*(T)\) the group of cocharacters of \(T\) \\
\(\pairing{-}{-}\) the canonical pairing \(X^*\times X_* \to \Z\)

\(R\;\; \subset X^*\) the set of roots\\
\(R^\vee \subset X_*\) the set of coroots

\(R^+\subset R\;\;\) a choice of positive roots\\
\(\simpleR\;\; \subset R^+\) the corresponding set of simple roots

\(\Weyl\) the Weyl group
\end{quote}
\begin{thmSerre}[\cite{Serre}*{Théorème~4}]\label{character-iso}
  For any split reductive group \(G\) over a field, the character isomorphism \(\Rep(T)\to\Z[X^*]\) restricts to an isomorphism
  \[
  \Rep(G) \xrightarrow{\cong} \Z[X^*]^{\Weyl}.
  \]
\end{thmSerre}
Additively, the invariants under the Weyl group can be described as follows. Let
\[
\FWC := \left\{ \xi\in X^* \mid \pairing{\xi}{\sigma^\vee} \geq 0 \text{ for all } \sigma\in\simpleR \right\}
\]
be the semigroup of dominant weights corresponding to our choice of simple roots \(\simpleR\). It is a fundamental domain for the action of the Weyl group. Therefore
\[
   \Z[X^*]^{\Weyl} = \bigoplus_{\delta\in\FWC}\Z\cdot S(\delta),
\]
where \(S(\delta)\) is the sum over the elements of the orbit of \(\delta\) under the Weyl group: \(S(\delta) = \sum_{\xi\in\Weyl.\delta}e^{\xi}\).

As for the multiplicative structure, it is well known that when \(G\) is semisimple and simply connected, \(\Rep(G)\) is a polynomial ring. More generally, if \(G\) is reductive with simply connected derived group \(\mathcal DG\), the representation ring of \(G\) may be identified with a product of the polynomial ring \(\Rep(\mathcal DG)\) and the ring of Laurent polynomials on the characters of \(G\) \citelist{\cite{SGA6}*{Exposé~0 App: RRR}\cite{Hodgkin}*{Prop.~11.1 (p.~81)}}. However, this identification depends on certain choices. We provide a proof that will suit our later needs.

We decompose \(X^*\) into a `toral part' and a `semisimple part'. For the first part, we take the character group of \(G\):
\begin{align*}
X^*(G) = (\simpleR^\vee)^\perp := \left\{\xi\in X^* \mid \pairing{\xi}{\sigma^\vee} = 0 \text{ for all } \sigma\in \simpleR\right\}
\end{align*}
The quotient \(\factor{X^*}{(\simpleR^\vee)^\perp}\) can be identified with the character group of a maximal torus \(T_{\mathcal DG}\) of the derived group \(\mathcal DG\)
\cite{Jantzen}*{II.1.18}.
Let \(\Omega\) be an arbitrary\footnote{%
We will see that when dealing with parabolic subgroups of simply connected semisimple groups, there is a natural choice for \(\Omega\). See also \Cref{choice-of-complement}.}
complement of \((\simpleR^\vee)^\perp\) in \(X^*\). Then the inclusion  \(i\colon T_{\mathcal DG}\hookrightarrow T\) induces an isomorphism \(i^*\) from \(\Omega\) to \(X^*(T_{\mathcal DG}) =: X^*_{\mathcal DG}\).  We thus have decompositions:
\[
\begin{aligned}
  X^*
  &= (\simpleR^\vee)^\perp \oplus \Omega \\
  &\cong X^*(G) \oplus X^*_{\mathcal DG}
\end{aligned}
\quad\quad\quad
\begin{aligned}
  \FWC
  &= (\simpleR^\vee)^\perp \oplus  (\FWC \cap\Omega) \\
  &\cong X^*(G) \oplus \FWC_{\mathcal DG}
\end{aligned}
\]
When \(\mathcal DG\) is simply connected, its set of dominant weights \({\FWC_{\mathcal DG}}\) is a \emph{free} abelian semigroup, so the isomorphism above shows that \(\FWC\) decomposes as a direct sum of a free abelian group \((\simpleR^\vee)^\perp\) with trivial \(\Weyl\)-action and a free abelian semigroup \(\FWC\cap\Omega\).
\begin{thm}\label{reps-of-reductive}
  Let \(G\) be a split reductive group whose derived group \(\mathcal DG\) is simply connected.
  Let \(X^*= X^*(G)\oplus \Omega\) be a decomposition as above. Choose a basis \(\xi_1,\dots, \xi_l\) of \(X^*(G)\) and a basis \(\omega_1,\dots, \omega_r\) of the free semigroup \(\FWC\cap\Omega\). Then we have a ring isomorphism
  \begin{align*}
    \Z[x_1^{\pm 1},\dots,x_l^{\pm l}]\otimes \Z[w_1,\dots,w_r] \xrightarrow{\quad\cong\quad} \Z[X^*]^W \cong \Rep (G)
  \end{align*}
  sending \(x_i\) to \(S(\xi_i) = e^{\xi_i}\) and \(w_i\) to \(S(\omega_i) = \sum_{\xi\in \Weyl.\omega_i} e^\xi\).
\end{thm}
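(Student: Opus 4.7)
The plan is to realize both sides as free modules over $R := \Z[X^*(G)]$ and to identify $\phi$ with a unitriangular change-of-basis matrix, which is then visibly invertible.

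First, we upgrade the additive identity $\Z[X^*]^W = \bigoplus_{\delta \in \FWC} \Z \cdot S(\delta)$ stated above to a free $R$-module structure. For $\xi \in X^*(G)$ and $\omega \in \FWC \cap \Omega$, the identity $S(\xi + \omega) = e^\xi \cdot S(\omega)$ holds because $W$ fixes $\xi$ pointwise, whence $W.(\xi+\omega) = \xi + W.\omega$ with stabilizer equal to that of $\omega$. Combined with $\FWC = X^*(G) \oplus (\FWC \cap \Omega)$, this exhibits $\Z[X^*]^W$ as a free $R$-module on the Weyl sums $\{S(\omega) : \omega \in \FWC \cap \Omega\}$. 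Since $\xi_1, \ldots, \xi_l$ is a $\Z$-basis of $X^*(G)$, we also have $R \cong \Z[x_1^{\pm 1}, \ldots, x_l^{\pm 1}]$ via $x_i \mapsto e^{\xi_i}$, so the source of $\phi$ becomes a free $R$-module on the monomials $w^b := \prod_j w_j^{b_j}$. The map $\phi$ is then a well-defined homomorphism of $R$-algebras.

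Next, we analyse the matrix of $\phi$ between these two free $R$-modules. Expanding $\phi(w^b) = \prod_j S(\omega_j)^{b_j}$ as a sum of $e^\mu$ over tuples of Weyl translates $\xi_{j,k} \in W.\omega_j$, the standard inequality $w\omega_j \leq \omega_j$ in dominance forces $\mu \leq \omega_b := \sum_j b_j \omega_j$, with the top weight $\omega_b$ attained uniquely (from the choice $\xi_{j,k}=\omega_j$ throughout). Regrouping by Weyl orbits yields
\[
  \prod_j S(\omega_j)^{b_j} = S(\omega_b) + \sum_{\substack{\delta \in \FWC \\ \delta < \omega_b}} c_\delta\, S(\delta).
\]
For each such $\delta$, we decompose $\delta = \eta(\delta) + \omega(\delta)$ with $\eta(\delta) \in X^*(G)$ and $\omega(\delta) \in \FWC \cap \Omega$, so that $S(\delta) = e^{\eta(\delta)} S(\omega(\delta))$. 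The crucial point is that the strict inequality $\delta < \omega_b$ in $\FWC$ forces $\omega(\delta) < \omega_b$ strictly in the order on $\FWC \cap \Omega$ inherited from $\FWC_{\mathcal{D}G}$: writing $\omega_b - \delta$ as a nontrivial non-negative combination of simple roots of $G$ and projecting modulo $X^*(G)$ onto $X^*_{\mathcal{D}G}$, the result $[\omega_b] - [\omega(\delta)]$ is a nontrivial non-negative combination of the simple roots of $\mathcal{D}G$, which is nonzero by their linear independence (here we use that $\mathcal{D}G$ is semisimple). In the $R$-basis $\{S(\omega_c)\}$ we therefore obtain
\[
  \phi(w^b) = S(\omega_b) + \sum_{\omega_c < \omega_b} A_{c,b}\, S(\omega_c), \qquad A_{c,b} \in R.
\]

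Finally, parameterising both bases by $\N^r$ via $b \leftrightarrow \omega_b = \sum b_j\omega_j$, the matrix of $\phi$ is unitriangular with respect to the dominance order, and the set of strict predecessors of any given $\omega_b$ is finite. Such a matrix is invertible over $R$ (write it as $I+N$ and note that $(I+N)^{-1} = \sum_k (-1)^k N^k$ terminates on finitely supported vectors, because strictly descending chains in the dominance order on $\FWC_{\mathcal{D}G}$ are finite), so $\phi$ is an isomorphism of $R$-modules, hence of rings. The main subtle point, we expect, is the transition from triangularity over $\Z$ in the basis indexed by $\FWC$ to triangularity over $R$ in the basis indexed by $\FWC \cap \Omega$; without the projection argument guaranteeing that strict inequalities are preserved, the $X^*(G)$-coefficients could in principle pollute the diagonal and defeat the triangularity argument.
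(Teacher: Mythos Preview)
Your proof is correct and rests on the same core idea as the paper's: the product formula \(S(\delta)S(\delta')=S(\delta+\delta')+(\text{smaller terms})\) with respect to the dominance order, together with the finiteness of predecessors. The paper argues surjectivity and injectivity separately (an induction over the number of smaller weights for the former, a maximal-term argument for the latter), working throughout with the full index set \(\FWC\). You instead factor out \(R=\Z[X^*(G)]\) at the start, reduce both sides to free \(R\)-modules indexed by \(\FWC\cap\Omega\cong\FWC_{\mathcal DG}\), and package the triangularity as a single unitriangular matrix over \(R\). This is a cleaner organisation of the same argument: the projection step \(\delta<\omega_b\Rightarrow\omega(\delta)<\omega_b\) that you single out is exactly what the paper gets implicitly by running its induction over the order on all of \(\FWC\), and your finiteness input (finite descending chains in \(\FWC_{\mathcal DG}\)) is the same as the paper's ``only finitely many dominant weights are smaller than a fixed one'', just seen after quotienting by the toral part.
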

\begin{proof}
Recall that there exists a partial order on the dominant weights \(\FWC\) with the following properties \cite{Adams:Lie}*{6.27, 6.36}:
\begin{compactitem}
  \item Only finitely many dominant weights are smaller than a fixed dominant weight.
  \item For arbitrary dominant weights \(\delta\) and \(\delta'\),
    \[ S(\delta)S(\delta') = S(\delta+\delta') + (\text{smaller terms}).\]
    Here, `\(S(\delta) + (\text{smaller terms})\)' is to be read as a short-hand for:
    there exist coefficients \(a_{\epsilon}\in\Z\) such that
    \[
    P = S(\delta) + \sum_{\mathclap{\epsilon\in \FWC\colon\epsilon\prec\delta}}a_{\epsilon} S(\epsilon).
    \]
\end{compactitem}
Let \(\Phi\) denote the ring homomorphism defined in the theorem.

For surjectivity, it suffices to show that, for any weight \(\delta\) in \(\FWC\), the symmetric sum \( S(\delta) \) has a preimage under \(\Phi\). This may be checked by induction over the number of weights smaller than \(\delta\). Indeed, let us write \(\delta = \sum_i a_i\xi_i + \sum_j b_j\omega_j\) with coefficients \(a_i\in\Z\) and \(b_j\in\N\). By the second property of the ordering above,
  \[
  \Phi(\prod x_i^{a_i}\otimes \prod w_j^{b_j}) = S(\delta) + (\text{smaller terms}).
  \]
  By the inductive hypothesis, we may assume that all terms on the right-hand side apart from \(S(\delta)\) are contained in the image of \(\Phi\), so \(S(\delta)\) itself must be contained therein.

  For injectivity, take an arbitrary element
  \[
  P = \sum_{\quad\quad\mathclap{\substack{\vec a = (a_1,\dots,a_l)\in\Z^l\\\vec b = (b_1,\dots,b_r)\in\N^r}}\quad\quad} c_{\vec a,\vec b} x_1^{a_1}\cdots x_l^{a_l}\cdot w_1^{b_1}\cdots w_r^{b_r}
  \]
  and suppose that \(\Phi(P) = 0\). We can write \(\Phi(P)\) as
  \[
  \Phi(P) = \sum_{\vec a,\vec b} c_{\vec a,\vec b} \left[S(\xi_{\vec a,\vec b}) + (\text{smaller terms})\right]
  \]
  where \(\xi_{\vec a,\vec b} := \sum_i a_i\xi_i + \sum_j b_j \omega_j\).
  The assumption that the \(\omega_j\) form a basis and not just a generating set of \(\FWC\cap \Omega\) ensures that the \(\xi_{\vec a,\vec b}\) and hence the \(S(\xi_{\vec a,\vec b})\) are all distinct.  Let \(\mathfrak S\) be the set of all pairs \((\vec a,\vec b) \) for which \(c_{\vec a,\vec b} \) is non-zero. If \(\mathfrak S\) is non-empty, we may choose some \((\vec a,\vec b)\in \mathfrak S\) such that \(\xi_{\vec a,\vec b}\) is maximal in the sense that it is not smaller than any other \(\xi_{\vec a,\vec b}\) with \((\vec a,\vec b)\in \mathfrak S\).
  Then \(\Phi(P)= 0\) implies that \(c_{\vec a,\vec b} = 0\)---a contradiction. Thus, \(\mathfrak S\) must be empty and we find that \( P=0\).
\end{proof}

\begin{rem}[Decomposition of \(X^*_\Q\)]\label{choice-of-complement}
If we pass to \(X^*_\Q = X^*\otimes_\Z\Q\), we have a canonical decomposition
\[
X^*_\Q = (\simpleR^\vee)^\perp_\Q \oplus \Q \simpleR.
\]
However, \((\simpleR^\vee)^\perp\oplus (\Q \simpleR\cap X^*)\)
is in general only a finite index subgroup of \(X^*\).
Geometrically, \(\Q \simpleR\cap X^*\) is the character group of a maximal torus of the semisimple quotient \(\bar G := \factor{G}{\mathcal R G}\)
and the isogeny \(\mathcal DG\to \bar G\) exhibits \(\Q \simpleR\cap X^*\) as a finite index subgroup of \(X^*_{\mathcal DG}\).
\end{rem}

\subsection{Representation rings of parabolic subgroups}\label{sec:rep-of-P}
From now on, we assume that \(G\) is semisimple and simply connected, so that \(\simpleR^\vee\) is a basis of \(X_*\). Let \(\left\{\omega_{\sigma}\mid\sigma\in\simpleR\right\}\) be the fundamental weights of \(G\), defined by
\[
\pairing{\omega_\sigma}{\nu^\vee} = \delta_{\sigma\nu}
\]
for \(\sigma,\nu\in\simpleR\) (\(\delta_{\sigma\nu}\) the Kronecker delta). Our assumptions ensure that the \(\omega_{\sigma}\) form a basis of \(X^*\) \cite{Jantzen}*{II.1.6}.

The parabolic subgroups of \(G\) are classified up to conjugation by subsets \mbox{\(\thetaR\subset\simpleR\)}.
We write \(P_\thetaR\) for the standard parabolic subgroup corresponding to \(\thetaR\). Let \(L_\thetaR\) be its Levi subgroup and \(U_\thetaR\) its unipotent radical, so that
\[
P_\thetaR= L_\thetaR \ltimes U_\thetaR.
\]
The Levi subgroup \(L_{\thetaR}\) is the subgroup of \(G\) generated by the maximal torus \(T\) and the root subgroups corresponding to the roots in \(R_\thetaR:=R\cap\Z\thetaR\), while \(U_\thetaR\) is the subgroup generated by root subgroups corresponding to the roots in \((-R^+)-R_\thetaR\) \cite{Jantzen}*{II.1.7--1.8}.
 For example, \(L_{\emptyset} =  T\) and \(P_{\emptyset}\) is a Borel subgroup, while \(L_\simpleR = P_\simpleR = G\).

\begin{lem}The projection \(P_{\thetaR}\twoheadrightarrow L_{\thetaR}\) induces an isomorphism
  \[
  \Rep(P_{\thetaR})\xleftarrow{\cong} \Rep(L_{\thetaR}).
  \]
\end{lem}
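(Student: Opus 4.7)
Write $i\colon L_\theta \hookrightarrow P_\theta$ for the inclusion and $\pi\colon P_\theta \twoheadrightarrow L_\theta$ for the projection, so that $\pi\circ i = \id_{L_\theta}$. On representation rings this yields $i^* \circ \pi^* = \id_{\Rep(L_\theta)}$, so $\pi^*$ is automatically injective. The heart of the argument is therefore to show that $\pi^*$ is surjective, or equivalently that $\pi^* \circ i^*$ is the identity on $\Rep(P_\theta)$.

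For this I would argue that in the Grothendieck ring $\Rep(P_\theta)$ every class can be written as a $\Z$-linear combination of classes of representations on which the unipotent radical $U_\theta$ acts trivially. Given a finite-dimensional $P_\theta$-representation $V$, I would construct inductively a filtration
\[
0 = V^{(0)} \subset V^{(1)} \subset V^{(2)} \subset \cdots \subset V^{(n)} = V
\]
by $P_\theta$-subrepresentations, with $V^{(k+1)}/V^{(k)} := (V/V^{(k)})^{U_\theta}$. The subspaces $V^{(k+1)}/V^{(k)}$ are $P_\theta$-stable because $U_\theta$ is a normal subgroup of $P_\theta$, and $U_\theta$ acts trivially on each successive quotient by construction. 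Hence each quotient factors through $\pi$ and is thus the inflation of an $L_\theta$-representation.

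The crucial input is that the filtration indeed exhausts $V$ in finitely many steps. This follows from the standard fact that a non-zero finite-dimensional representation of a connected unipotent group over an arbitrary field admits a non-zero fixed vector; applied to the action of $U_\theta$ on $V/V^{(k)}$, it ensures that $V^{(k+1)} \supsetneq V^{(k)}$ unless $V^{(k)} = V$. This fact — either derived from the Lie--Kolchin theorem for solvable groups or proved directly for unipotent groups by embedding into strictly upper triangular matrices — is the main technical point, but it is a classical result one may invoke.

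Putting this together, in $\Rep(P_\theta)$ we have $[V] = \sum_{k} [V^{(k+1)}/V^{(k)}] = \sum_k \pi^*[\,i^*(V^{(k+1)}/V^{(k)})\,]$, which is manifestly in the image of $\pi^*$. Since each summand equals $\pi^* i^*$ applied to itself (being the inflation of an $L_\theta$-representation), and the right-hand side collapses to $\pi^* i^*[V]$ by additivity, we conclude $\pi^* \circ i^* = \id$ and hence that $\pi^*\colon \Rep(L_\theta) \to \Rep(P_\theta)$ is a ring isomorphism.
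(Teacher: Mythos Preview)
Your proof is correct and follows essentially the same strategy as the paper: both arguments reduce to the key fact that a non-zero finite-dimensional representation of a unipotent group has a non-zero fixed vector, which forces every class in $\Rep(P_\thetaR)$ to come from $\Rep(L_\thetaR)$. The paper simply cites \cite{Jantzen}*{I.6.3} for the characterization of the image of $\pi^*$ in terms of simple $P$-modules with a $U$-fixed vector, whereas you unpack this by building the $U_\thetaR$-socle filtration explicitly; your use of the Levi section for injectivity is a minor variation on the paper's appeal to the general fact that $\pi^*$ is always a monomorphism.
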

\begin{proof}
  This is a general fact for extensions of unipotent groups.
  For an arbitrary extension \(1\to U\to P\xrightarrow{\pi} L\to 1\),
  the projection \(\pi\) induces a monomorphism \(\Rep(P)\hookleftarrow \Rep(L)\) whose image is generated by those simple \(P\)-modules which have a non-zero \(U\)-fixed vector \cite{Jantzen}*{I.6.3}.
  When \(U\) is unipotent, any non-zero representation has such a fixed vector.
\end{proof}

The Levi subgroup \(L_{\thetaR}\) is a split reductive group with maximal torus \(T\), simple roots \(\thetaR\) and Weyl group the subgroup \(\Weyl[\thetaR]\subset \Weyl\) generated by the reflections in the elements of \(\thetaR\) \cite{Jantzen}*{I.1.7}.
The assumption that \(G\) is simply connected ensures that \(\mathcal DL_{\thetaR}\) is also simply connected.
We can therefore apply \Cref{reps-of-reductive} to \(L_{\thetaR}\) and the decomposition
\begin{equation}
  X^* = (\thetaR^\vee)^\perp \oplus \Omega_\thetaR,
\end{equation}
where  \(\Omega_{\thetaR}\) is the subgroup of \(X^*\) generated by the fundamental weights \(\left\{\omega_\teta\mid\teta\in\thetaR\right\}\).
\begin{cor}\label{repring-of-P}
  Let \(P_{\thetaR}\) be a standard parabolic subgroup of a semisimple simply connected algebraic group \(G\) with fundamental weights \(\left\{\omega_{\sigma} \mid \sigma\in \simpleR\right\}\). Then we have a ring isomorphism
\[
\Z[w_\teta,x_\beta^{\pm 1} \;\vert\; \teta\in\thetaR, \beta\in\nthetaR] \xrightarrow{\quad\cong\quad} \Z[X^*]^{\Weyl[\thetaR]} \cong \Rep(P_{\thetaR})
\]
sending \(w_\teta\) to \(S(e^{\omega_\teta})\) and \(x_\beta\) to \(S(e^{\omega_\beta}) = e^{\omega_\beta}\).
\end{cor}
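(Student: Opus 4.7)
The corollary is set up to be a direct application of \Cref{reps-of-reductive} to the Levi subgroup $L_\thetaR$, composed with the identification $\Rep(P_\thetaR)\cong\Rep(L_\thetaR)$ of the preceding lemma. My plan is to verify that the hypotheses of \Cref{reps-of-reductive} are met and that the explicit bases appearing in the statement match those one extracts from the general theorem.

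First, I would recall the ingredients already furnished by the text: $L_\thetaR$ is split reductive with maximal torus $T$, its simple roots are $\thetaR$ and its Weyl group is $\Weyl[\thetaR]$, and the assumption that $G$ is simply connected forces the derived subgroup $\mathcal DL_\thetaR$ to be simply connected as well. So \Cref{reps-of-reductive} applies to $L_\thetaR$ with respect to \emph{any} complement of $(\thetaR^\vee)^\perp$ in $X^*$. One then needs to check that $\Omega_\thetaR := \bigoplus_{\teta\in\thetaR}\Z\omega_\teta$ is such a complement. Because $G$ is simply connected, the $\{\omega_\sigma\mid \sigma\in\simpleR\}$ form a $\Z$-basis of $X^*$, and the dual pairing $\pairing{\omega_\sigma}{\nu^\vee}=\delta_{\sigma\nu}$ immediately identifies
\[
(\thetaR^\vee)^\perp = \bigoplus_{\beta\in\nthetaR}\Z\omega_\beta,
\]
so $X^* = (\thetaR^\vee)^\perp\oplus\Omega_\thetaR$ as required. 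In particular, $\{\omega_\beta\mid\beta\in\nthetaR\}$ is the requested basis of $X^*(L_\thetaR) = (\thetaR^\vee)^\perp$, providing the Laurent generators $x_\beta$.

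Next I would identify the semigroup of dominant weights that lies in $\Omega_\thetaR$. The dominant weights of $L_\thetaR$ are by definition those $\xi\in X^*$ with $\pairing{\xi}{\teta^\vee}\geq 0$ for every $\teta\in\thetaR$, and intersecting with $\Omega_\thetaR$ singles out the nonnegative integer combinations $\sum_{\teta\in\thetaR}c_\teta\omega_\teta$. This is exactly the free abelian semigroup on $\{\omega_\teta\mid\teta\in\thetaR\}$, giving the polynomial generators $w_\teta$. Plugging these two bases into \Cref{reps-of-reductive} yields the announced ring isomorphism.

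Finally, I would justify the two formulas for the images. For the $w_\teta$ there is nothing to add: the theorem prescribes $w_\teta\mapsto S(e^{\omega_\teta})$, where the symmetric sum is taken under the Weyl group $\Weyl[\thetaR]$ of $L_\thetaR$. For the $x_\beta$ it suffices to observe that $\omega_\beta$ is $\Weyl[\thetaR]$-invariant when $\beta\in\nthetaR$: since $\Weyl[\thetaR]$ is generated by the simple reflections $s_\teta$ with $\teta\in\thetaR$, and since the reflection formula $s_\teta(\omega_\beta) = \omega_\beta - \pairing{\omega_\beta}{\teta^\vee}\teta = \omega_\beta$ holds, the orbit is a singleton and $S(e^{\omega_\beta})=e^{\omega_\beta}$. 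There is no real obstacle in the argument; the only thing one has to be careful about is not to confuse the Weyl group $\Weyl[\thetaR]$ of $L_\thetaR$ with the full Weyl group $\Weyl$ of $G$, which affects both the definition of $S(-)$ and the meaning of \emph{dominant} in the intersection $\FWC_{L_\thetaR}\cap\Omega_\thetaR$.
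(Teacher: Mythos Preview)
Your proof is correct and follows essentially the same approach as the paper: apply \Cref{reps-of-reductive} to $L_\thetaR$ with the decomposition $X^* = (\thetaR^\vee)^\perp \oplus \Omega_\thetaR$, identifying the basis $\{\omega_\beta\mid\beta\in\nthetaR\}$ of $(\thetaR^\vee)^\perp$ and the semigroup basis $\{\omega_\teta\mid\teta\in\thetaR\}$ of $\FWC[\thetaR]\cap\Omega_\thetaR$. Your version is simply more explicit, spelling out the verification that $\Omega_\thetaR$ is a complement and that $S(e^{\omega_\beta})=e^{\omega_\beta}$ via the reflection formula, points the paper leaves to the reader.
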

\begin{proof}
  The fundamental weights \(\left\{\omega_\beta\mid\beta\in\nthetaR\right\}\) form a basis of the character group
  \(X^*(L_{\thetaR}) = (\thetaR^\vee)^\perp\), while the remaining fundamental weights
  \(\left\{\omega_\teta\mid\teta\in\thetaR\right\}\) form a basis of \(\Omega_{\thetaR}\).
  Moreover, the \(\omega_\teta\) form a basis of the free semigroup  \(\FWC[\thetaR] \cap \Omega_{\thetaR}\),
  where \(\FWC[\thetaR]\) is the set of dominant weights of \(L_{\thetaR}\):
  \[
  \FWC[\thetaR] := \left\{ \delta\in X^*\mid \pairing{\delta}{\teta^\vee} \geq 0\text{ for all }\teta\in\thetaR\right\}\qedhere
  \]
\end{proof}

\subsection{Root data and fundamental weights}\label{sec:root-data}
The relation between \(L_\thetaR\), its derived group \(\mathcal DL_\thetaR\) and its semisimple quotient
\(\bar L_\thetaR := \factor{L_\thetaR}{\mathcal RL_\thetaR}\) will be central to all that follows.
We will write \(i\) and \(p\) for the inclusion of \(\mathcal DL_\thetaR\) and the projection onto \(\bar L_\thetaR\):
\[
\mathcal DL_\thetaR \overset{i}\hookrightarrow L_\thetaR \overset{p}\twoheadrightarrow \bar L_\thetaR
\]
Let \(R_\thetaR := R\cap \Z\thetaR\) and \(R_\thetaR^\vee := R^\vee\cap \Z\thetaR^\vee\).
The root data of \(L_\thetaR\), \(\mathcal D L_\thetaR\) and \(\bar L_\thetaR\) can be summarized as follows (\cf \cite{Jantzen}*{II.1.18}):
\begin{alignat*}{12}
  &\mathcal DL_\thetaR\colon\quad &&(\factor{X^*}{(\thetaR^\vee)^\perp},\; &&X_*\cap \Q\thetaR^\vee,          &&i^*R_\thetaR,\; &&R_\thetaR^\vee      &&)\\
  &L_\thetaR\colon                &&(X^*,                                &&X_*,                             &&R_\thetaR,        &&R_\thetaR^\vee      &&)\\
  &\bar L_\thetaR\colon           &&(X^*\cap\Q\thetaR,                   &&\factor{X_*}{\thetaR^\perp},\; &&R_\thetaR,        &&p_*R_\thetaR^\vee &&)
\end{alignat*}
In the proof of \Cref{repring-of-P}, we used the decomposition \(X^*=(\thetaR^\vee)^\perp \oplus \Omega_\thetaR\)
for which \(i^*\) restricts to an isomorphism \(\Omega_\thetaR\cong X^*_{\mathcal DL_\thetaR}\).
The fundamental weights of the simply connected group \(\mathcal DL_\thetaR\) are precisely the images
of the fundamental weights \(\omega_\teta\in\Omega_\thetaR\):
\begin{equation}\label{eq:fundamental-to-fundamental}
  i^*\omega_\teta = \omega_{i^*\teta}
\end{equation}
As in \Cref{choice-of-complement}, we also have a rational decomposition
\begin{equation}\label{eq:Q-decomposition}
  X^*_\Q = (\thetaR^\vee)^\perp_\Q \oplus \Q\thetaR.
\end{equation}
The subspace \(\Q\thetaR\subset X^*_\Q\) does not in general contain any fundamental weights of \(G\).
Rather, the fundamental weights \(\bar \omega_\teta\) of \(\bar L_\thetaR\) are the components of the weights
\(\omega_\teta\) in the direction of \(\Q\thetaR\):
\begin{equation}\label{eq:fundamental-components}
  \omega_\teta = \prii(\omega_\teta) + \bar \omega_\teta
\end{equation}
where \(\prii\colon (\thetaR^\vee)^\perp_\Q \oplus \Q\thetaR\twoheadrightarrow (\thetaR^\vee)^\perp_\Q \) denotes the projection away from \(\Q\thetaR\).

All three groups \(L_\thetaR\), \(\mathcal DL_\thetaR\) and \(\bar L_\thetaR\) have the same Weyl group \(\Weyl[\thetaR]\).
The projection \(i^*\) and the inclusion \(p_*\) are equivariant with respect to its action.

\subsection{Involutions on the weight lattice}\label{sec:involutions-on-weights}
We can analyse the involution on \(\Rep(P_{\thetaR})\) induced by the duality on representations by considering the inclusion
\(\Rep(R_{\thetaR})\hookrightarrow \Rep(T) = \Z[X^*]\).
On \(X^*\), the duality corresponds to multiplication by \( -1 \),
so the induced involution \(*\) on \(\Z[X^*]\) is given by
\begin{align*}
  (e^\xi)^* &:= e^{-\xi}.
\end{align*}
To describe how this involution acts on \(S(\delta)\) for a dominant weight \(\delta\in\FWC[\thetaR]\), we introduce a second involution \(\ddual\) on \(X^*\) that restricts to \(\FWC[\thetaR]\).
\begin{defn}
Let \(w_0\) be the longest element of the Weyl group \( W_{\thetaR}\).
For any \(\xi\in X^*,\) we define
\[
\xi^\ddual:= -w_0.\xi.
\]
\end{defn}
The involution \(\ddual\) restricts to \(\FWC[\thetaR]\) since \(w_0\) sends \(\FWC[\thetaR]\) to \(-\FWC[\thetaR]\) \cite{Jantzen}*{II.1.5}.
As \(S(\xi)^* = S(-\xi) = S(-w.\xi)\) for arbitrary \(\xi\in X^*\) and \(w\in\Weyl[\thetaR]\), we obtain:
\begin{lem}\label{dual-S}
  The \(*\)-dual of \( S(\delta)\in\Z[\WLattice]^{\Weyl[\thetaR]} \) is given by
  \mbox{\(S(\delta)^* = S(\delta^\ddual)\)}.
\end{lem}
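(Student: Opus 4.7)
The plan is to verify the identity termwise by unpacking both sides using the definitions of $S(\delta)$ and of the involution $*$, and then performing a simple index shift by the longest element $w_0$.

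First, by definition, $S(\delta)$ is the orbit sum $\sum_{\xi\in\Weyl[\thetaR].\delta}e^{\xi}$, and the involution $*$ acts on $\Z[X^*]$ by $(e^{\xi})^*=e^{-\xi}$. Applying $*$ termwise therefore gives
\[
S(\delta)^* \;=\; \sum_{\xi\in \Weyl[\thetaR].\delta} e^{-\xi}.
\]
So the content of the lemma is that the set $-\Weyl[\thetaR].\delta$ coincides with the orbit $\Weyl[\thetaR].\delta^{\ddual} = \Weyl[\thetaR].(-w_0.\delta)$.

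Second, I would establish this set equality by the elementary reindexing $w=w'w_0$. For $w\in\Weyl[\thetaR]$, write $w = w'w_0$ with $w'=ww_0^{-1}\in\Weyl[\thetaR]$; then
\[
-w.\delta \;=\; -w'w_0.\delta \;=\; w'.(-w_0.\delta) \;=\; w'.\delta^{\ddual}.
\]
Since $w_0\in\Weyl[\thetaR]$, the map $w\mapsto ww_0^{-1}$ is a bijection of $\Weyl[\thetaR]$, so as $\xi=w.\delta$ ranges over the orbit $\Weyl[\thetaR].\delta$, the element $-\xi$ ranges over $\Weyl[\thetaR].\delta^{\ddual}$. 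Rewriting the above sum with this reindexing yields $S(\delta)^* = S(\delta^{\ddual})$, as required.

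There is essentially no obstacle: the only point worth noting, already flagged in the text preceding the lemma, is that $\delta^{\ddual}\in\FWC[\thetaR]$ whenever $\delta\in\FWC[\thetaR]$, which follows from the standard fact that $w_0$ sends $\FWC[\thetaR]$ to $-\FWC[\thetaR]$. This guarantees that $S(\delta^{\ddual})$ is one of the distinguished basis elements of $\Z[X^*]^{\Weyl[\thetaR]}$ indexed by dominant weights, so that the formula is meaningful on the nose and not just up to the choice of orbit representative.
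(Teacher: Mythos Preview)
Your proof is correct and follows the same approach as the paper. The paper's argument is the single line preceding the lemma, ``As \(S(\xi)^* = S(-\xi) = S(-w.\xi)\) for arbitrary \(\xi\in X^*\) and \(w\in\Weyl[\thetaR]\)'', which is exactly your reindexing spelled out more tersely; your explicit bijection \(w\mapsto ww_0^{-1}\) simply unpacks why \(S(-\delta)=S(-w_0.\delta)\).
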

The lemma shows that the duality on \(\Z[\WLattice]^{\Weyl[\thetaR]}\) is determined by the action of an element \(w_0\in\Weyl[\thetaR]\).
We therefore take a closer look at the interaction of the Weyl group with the decomposition
\(X^* = (\thetaR^\vee)^\perp\oplus\Omega_\thetaR\).
On \((\thetaR^\vee)^\perp\), the action of \(\Weyl[\thetaR]\) is trivial.
But the action does not restrict to \(\Omega_\thetaR\).
Instead, we can define an action on \(\Omega_\thetaR\) by lifting the action on \(X^*_{\mathcal DL_\thetaR}\)
via the isomorphism  \(i^*\colon \Omega_\thetaR \cong X^*_{\mathcal DL_{\thetaR}}\):
\begin{defn}
  For \(w\in\Weyl[\thetaR]\) and \(\omega\in\Omega_\thetaR\), we define \(w\# \omega\) and \(\omega^\#\)  to be the  unique elements of \(\Omega_\thetaR\) satisfying:
  \begin{align*}
    i^*(w\#\omega)  &= i^*(w.\omega)     &&(= w.i^*\omega)\\
    i^*(\omega^\#)  &= i^*(\omega^\ddual) &&(= i^*(\omega)^\ddual)
  \end{align*}
\end{defn}
Evidently, \(\omega^\# = -w_0\#\omega\), and \(\#\) defines an involution on \(\Omega_\thetaR\) that restricts to \(\FWC_\thetaR\cap\Omega_\thetaR\).

Now consider the rational decomposition
\(
X^*_\Q = (\thetaR^\vee)^\perp_\Q \oplus \Q \thetaR
\).
Unlike the integral decomposition above, this decomposition is equivariant: the action of \(\Weyl[\thetaR]\) restricts to both factors.
Recall that \(\prii\colon X^*_\Q \twoheadrightarrow (\thetaR^\vee)^\perp_\Q\) denotes the projection away from \(\Q\thetaR\).
\begin{lem}
  For any \(w\in\Weyl[\thetaR]\) and any \(\omega\in\Omega_\thetaR\) we have
  \begin{equation}\label{eq:relation-of-actions}
  w\#\omega = w.\omega - \prii(\omega) + \prii(w\#\omega).
  \end{equation}
\end{lem}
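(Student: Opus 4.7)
The plan is to compare the two sides by reducing everything to the defining property of \(\#\) together with the equivariance of the rational decomposition \eqref{eq:Q-decomposition}. First, unwinding the definition, both \(w\#\omega\) and \(w.\omega\) map to \(w.i^*\omega\) under \(i^*\colon X^*\twoheadrightarrow X^*/(\thetaR^\vee)^\perp\), so their difference
\[
c := w\#\omega - w.\omega
\]
lies in \(\ker(i^*) = (\thetaR^\vee)^\perp\), and in particular in \((\thetaR^\vee)^\perp_\Q\).

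Next, I would exploit the fact that in the rational decomposition \(X^*_\Q = (\thetaR^\vee)^\perp_\Q \oplus \Q\thetaR\) the Weyl group \(\Weyl[\thetaR]\) preserves both summands and acts trivially on the first summand (since the elements of \(\Weyl[\thetaR]\) are generated by reflections in the roots of \(\thetaR\), which fix \((\thetaR^\vee)^\perp_\Q\) pointwise and stabilize \(\Q\thetaR\)). Consequently, the projection \(\prii\) is \(\Weyl[\thetaR]\)-equivariant and satisfies \(\prii(w.\omega) = w.\prii(\omega) = \prii(\omega)\).

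Applying \(\prii\) to \(w\#\omega = w.\omega + c\) and using \(\prii(c) = c\) (because \(c \in (\thetaR^\vee)^\perp_\Q\)), I get
\[
\prii(w\#\omega) = \prii(w.\omega) + c = \prii(\omega) + c,
\]
so that \(c = \prii(w\#\omega) - \prii(\omega)\). Substituting this back into the definition of \(c\) yields \eqref{eq:relation-of-actions} after rearrangement.

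I do not anticipate a real obstacle here; the argument is essentially bookkeeping. The only mildly delicate point is the transition from the integral kernel \((\thetaR^\vee)^\perp\) appearing in the definition of \(\#\) to the rational complement \((\thetaR^\vee)^\perp_\Q\) in which \(\prii\) is defined, but this is handled by the evident inclusion. It is worth noting that the identity holds \emph{a priori} in \(X^*_\Q\), but since both \(w\#\omega\) and \(w.\omega\) lie in \(X^*\) and their difference is actually integral, the two correction terms \(\prii(\omega)\) and \(\prii(w\#\omega)\) conspire to give an integral quantity, as required.
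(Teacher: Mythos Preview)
Your proof is correct and follows essentially the same approach as the paper: both arguments rest on the two facts that \(i^*(w\#\omega)=i^*(w.\omega)\) (the defining property of \(\#\)) and \(\prii(w.\omega)=\prii(\omega)\) (equivariance of the rational decomposition). The paper organizes this slightly differently---rewriting the identity as \(w\#\omega-\prii(w\#\omega)=w.\omega-\prii(w.\omega)\), observing both sides lie in \(\Q\thetaR\), and then applying \(i^*\), which is injective there---but the content is the same.
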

\begin{proof}
  As the decomposition \eqref{eq:Q-decomposition} is equivariant, we have \(\prii(\omega) = \prii(w.\omega)\). We can thus rewrite \cref{eq:relation-of-actions} as \(w\#\omega-\prii(w\#\omega) = w.\omega-\prii(w.\omega)\), with both sides in \(\Q\thetaR\). It suffices to check this equation  after applying \(i^*\colon X^* \to X^*_{\mathcal DL_{\thetaR}}\) since \(i^*\) induces an isomorphism from \(\Q\thetaR\) to \(X^*_{\mathcal DL_{\thetaR},\Q}\). As \(i^*\prii = 0\), we are left with the defining equation of \(\w\#\omega\).
\end{proof}

\newcommand{\twist}[1]{\tau(#1)}
\begin{cor}
  The \(\ddual\)-dual of an element \(\omega\in \Omega_\thetaR\) can be written as
  \begin{equation}\label{eq:duality-formula}
    \omega^\ddual = \omega^\# + \twist{\omega},
  \end{equation}
  where \(\twist{\omega} := -\prii(\omega) - \prii(\omega^{\#}) \in (\thetaR^\vee)^\perp\).
  \qed
\end{cor}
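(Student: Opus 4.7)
The corollary is essentially a direct rearrangement of the preceding lemma, so the plan is to obtain it by specializing the lemma to $w = w_0$ (the longest element of $\Weyl[\thetaR]$) and then reading off the claim. The only mildly non-trivial point is the integrality assertion, which I handle separately at the end.

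First I would substitute $w = w_0$ into the identity of the preceding lemma, obtaining
\[
w_0 \# \omega \;=\; w_0.\omega - \prii(\omega) + \prii(w_0 \# \omega).
\]
Negating both sides, and using the definitions $\omega^\ddual = -w_0.\omega$ and $\omega^\# = -(w_0\#\omega)$ (which is consistent with $i^*(\omega^\#) = i^*(\omega^\ddual)$), this rewrites as
\[
\omega^\# \;=\; \omega^\ddual + \prii(\omega) + \prii(\omega^\#).
\]
Solving for $\omega^\ddual$ then yields the formula $\omega^\ddual = \omega^\# - \prii(\omega) - \prii(\omega^\#) = \omega^\# + \twist{\omega}$.

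The remaining step is to verify that $\twist{\omega}$ really lies in $(\thetaR^\vee)^\perp$, not just in $(\thetaR^\vee)^\perp_\Q$. For this I would observe that $\omega^\ddual\in X^*$ and $\omega^\#\in\Omega_\thetaR\subset X^*$, so their difference $\twist{\omega}$ is integral; since the projection $\prii$ lands in $(\thetaR^\vee)^\perp_\Q$, the element $\twist{\omega}$ lies in $X^* \cap (\thetaR^\vee)^\perp_\Q = (\thetaR^\vee)^\perp$.

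There is no real obstacle here: the corollary is a formal consequence of the lemma together with the definition of $\#$. The only point requiring a moment of thought is making sure the signs and the identification $\omega^\# = -(w_0\#\omega)$ are handled correctly, and checking the integrality of $\twist{\omega}$ at the end.
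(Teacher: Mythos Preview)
Your argument is correct and is exactly the derivation the paper has in mind: the corollary carries a \qedhere\ because it follows immediately from the preceding lemma by setting $w=w_0$ and using $\omega^\#=-w_0\#\omega$, $\omega^\ddual=-w_0.\omega$. Your additional check that $\twist{\omega}\in(\thetaR^\vee)^\perp$ (rather than merely $(\thetaR^\vee)^\perp_\Q$) is a nice touch that the paper leaves implicit.
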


Let us reformulate \cref{eq:duality-formula} in terms of the fundamental weights.
Consider first the involution \(\ddual = -w_0\) on the lattice \(X^*_{\mathcal DL_{\thetaR}}\)
of the semisimple group \(\mathcal DL_{\thetaR}\).
It restricts to the set of positive roots \cite{Jantzen}*{II.1.5} and hence to the simple roots \(i^*\thetaR\).
It follows that \(\ddual\) also restricts to an involution on the set of fundamental weights \(\left\{\omega_{i^*\teta} \mid \teta\in\thetaR\right\}\), such that
\begin{equation}\label{eq:simple-dual-fundamental-dual}
  (\omega_{i^*\teta})^\ddual = \omega_{i^*\teta^\ddual}.
\end{equation}
Now consider the action of \(-w_0\) on \(X^*\).
It also restricts to an involution \(\teta\mapsto\teta^\ddual\) on the simple roots \(\thetaR\),
identical under \(i^*\) to the involution on \(i^*\thetaR\).
But it does not restrict to an involution of the fundamental weights \(\omega_\teta\). Rather, we have:
\begin{lem}
  For any \(\teta\in\thetaR\), \(\omega_\teta^\# = \omega_{\teta^\ddual}\).
\end{lem}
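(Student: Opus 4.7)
The plan is to reduce the identity to a check after applying $i^*\colon X^* \to X^*_{\mathcal D L_{\thetaR}}$.  Recall that $i^*$ restricts to an isomorphism $\Omega_\thetaR \xrightarrow{\cong} X^*_{\mathcal D L_{\thetaR}}$; so since both $\omega_\teta^\#$ (by definition) and $\omega_{\teta^\ddual}$ (a fundamental weight of $G$, hence in $\Omega_\thetaR$) lie in $\Omega_\thetaR$, it suffices to prove $i^*(\omega_\teta^\#) = i^*(\omega_{\teta^\ddual})$.

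For the left-hand side, the defining property of $\#$ gives $i^*(\omega_\teta^\#) = (i^*\omega_\teta)^\ddual$, and by \eqref{eq:fundamental-to-fundamental} this equals $(\omega_{i^*\teta})^\ddual$, which in turn equals $\omega_{i^*\teta^\ddual}$ by \eqref{eq:simple-dual-fundamental-dual}.  For the right-hand side, \eqref{eq:fundamental-to-fundamental} gives $i^*(\omega_{\teta^\ddual}) = \omega_{i^*(\teta^\ddual)}$.  So the identity reduces to checking that $i^*(\teta^\ddual) = (i^*\teta)^\ddual$ as elements of $X^*_{\mathcal D L_\thetaR}$.

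This last compatibility is the only point that is not purely formal, but even it is immediate:  both involutions $\ddual$ are by definition $-w_0.(-)$ for the same longest element $w_0 \in \Weyl[\thetaR]$, and $i^*$ is $\Weyl[\thetaR]$-equivariant (as noted at the end of \Cref{sec:root-data}).  So $i^*(-w_0.\teta) = -w_0.i^*\teta$, which is exactly what we need.  Thus no step is really an obstacle; the lemma is a bookkeeping consequence of the definitions of $\#$ and of the fact that $\ddual$ on $X^*$ and on $X^*_{\mathcal D L_\thetaR}$ are transported into one another by $i^*$.
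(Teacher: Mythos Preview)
Your proof is correct and is essentially the same as the paper's: both reduce to checking the defining property of $\omega_\teta^\#$ after applying $i^*$, and both use \eqref{eq:fundamental-to-fundamental} and \eqref{eq:simple-dual-fundamental-dual} for the chain of equalities. The only difference is presentational: the paper computes $i^*(\omega_{\teta^\ddual})$ and verifies it equals $(i^*\omega_\teta)^\ddual$ in one chain, whereas you compute both sides separately and explicitly spell out the compatibility $i^*(\teta^\ddual)=(i^*\teta)^\ddual$ via the $\Weyl[\thetaR]$-equivariance of $i^*$, which the paper records in the sentence just before the lemma.
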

\begin{proof}
  Clearly \(\omega_{\teta^\ddual}\in\Omega_{\thetaR}\). So we only need to show that \(\omega_{\teta^\ddual}\) satisfies the defining property of \(\omega_\teta^\#\).  This follows from \cref{eq:fundamental-to-fundamental,eq:simple-dual-fundamental-dual}: \(i^*(\omega_{\teta^\ddual}) = \omega_{i^*\teta^\ddual} =  (\omega_{i^*\teta})^\ddual = (i^*\omega_\teta)^\ddual\), as required.
\end{proof}

\begin{cor}\label{dual-fundamental}
  The \(\ddual\)-duals of fundamental weights \(\omega_\teta\) with \(\teta\in\thetaR\) and \(\omega_\beta\) with \(\beta\in\nthetaR\) are given by
  \begin{align*}
    \omega_\beta^\ddual &= -\omega_\beta\\
    \omega_\teta^\ddual &= \omega_{\teta^\ddual} + \twist{\omega_\teta},
  \end{align*}
  where \(\twist{\omega_\teta}= -\prii(\omega_\teta)-\prii(\omega_{\teta^\ddual})\).
  \qed
\end{cor}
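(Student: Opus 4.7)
The plan is to prove each identity by direct substitution into previously established formulas, the main work having already been carried out in the preceding lemmas.

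For the first identity, concerning $\omega_\beta$ with $\beta \in \nthetaR$: the defining property $\pairing{\omega_\beta}{\nu^\vee} = \delta_{\beta\nu}$ gives $\pairing{\omega_\beta}{\teta^\vee} = 0$ for every $\teta\in\thetaR$, so $\omega_\beta$ lies in $(\thetaR^\vee)^\perp$. As noted before the definition of $\#$, the action of $\Weyl[\thetaR]$ on this sublattice is trivial, hence $w_0.\omega_\beta = \omega_\beta$, and so $\omega_\beta^\ddual = -w_0.\omega_\beta = -\omega_\beta$.

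For the second identity, concerning $\omega_\teta$ with $\teta\in\thetaR$: these fundamental weights lie in $\Omega_\thetaR$ by construction, so the general duality formula \eqref{eq:duality-formula} applies and gives $\omega_\teta^\ddual = \omega_\teta^\# + \twist{\omega_\teta}$. The preceding lemma already identifies $\omega_\teta^\# = \omega_{\teta^\ddual}$, yielding $\omega_\teta^\ddual = \omega_{\teta^\ddual} + \twist{\omega_\teta}$. Substituting $\omega_\teta^\# = \omega_{\teta^\ddual}$ into the definition $\twist{\omega} = -\prii(\omega) - \prii(\omega^\#)$ rewrites the twist term as $\twist{\omega_\teta} = -\prii(\omega_\teta) - \prii(\omega_{\teta^\ddual})$, as asserted.

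There is really no obstacle here — the corollary is a bookkeeping consequence of the duality formula \eqref{eq:duality-formula} together with the lemma $\omega_\teta^\# = \omega_{\teta^\ddual}$; the only substantive point is the observation that the white-node fundamental weights already sit in the fixed lattice $(\thetaR^\vee)^\perp$, which makes the first formula immediate and explains why no twist correction appears in that case.
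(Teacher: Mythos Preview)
Your proof is correct and matches the paper's intended argument: the paper marks this corollary with \qed and no proof, treating it as an immediate consequence of the duality formula \eqref{eq:duality-formula} and the lemma \(\omega_\teta^\# = \omega_{\teta^\ddual}\), together with the triviality of the \(\Weyl[\thetaR]\)-action on \((\thetaR^\vee)^\perp\). You have simply made these implicit steps explicit.
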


\subsection{The involution on the representation ring}\label{sec:involution-on-RepP}
We can now indicate the general form of the duality on \(\Rep(P_{\thetaR})\).  Let us write the twists \(\twist{\omega_\teta}\) as vectors in the basis \(\left\{\omega_\beta \mid \beta\in\nthetaR\right\}\) of \((\thetaR^\vee)^\perp\):
\[
\twist{\omega_\teta} = \sum_{\beta\in\nthetaR} m_\beta^\teta \omega_\beta \quad \quad \in (\thetaR^\vee)^\perp.
\]

\begin{cor}\label{dual-on-repring}
  Under the isomorphism
  \[
  \Z[w_\teta,x_\beta^{\pm 1} \;\vert\; \teta\in\thetaR, \beta\in\nthetaR] \cong \Rep(P_{\thetaR})
  \]
  of \Cref{repring-of-P}, the involution on \(\Rep(P_{\thetaR})\) that sends a representation to its dual representation corresponds to the duality \(\star\) given by:
  \begin{align*}
    w_\teta^\star &= w_{\teta^\ddual}\cdot \textstyle\prod_{\beta}x_\beta^{m_\beta^\teta} \\
    x_\beta^\star &= x_\beta^{-1}
  \end{align*}
\end{cor}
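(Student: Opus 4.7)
The plan is to unwind the definitions and reduce everything to Lemma~\ref{dual-S} and Corollary~\ref{dual-fundamental}. Under the isomorphism of Corollary~\ref{repring-of-P}, the generator $x_\beta$ corresponds to the orbit sum $S(\omega_\beta)=e^{\omega_\beta}$ (the orbit is a singleton since $\omega_\beta\in(\thetaR^\vee)^\perp$ is fixed by $\Weyl[\thetaR]$), and $w_\teta$ corresponds to $S(\omega_\teta)$. So the computation reduces to applying Lemma~\ref{dual-S} to each generator and rewriting the resulting orbit sum in terms of the generators.

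For $x_\beta$ the calculation is immediate: by Lemma~\ref{dual-S} and Corollary~\ref{dual-fundamental},
\[
x_\beta^\star = S(\omega_\beta)^\star = S(\omega_\beta^\ddual) = S(-\omega_\beta) = e^{-\omega_\beta} = x_\beta^{-1}.
\]
For $w_\teta$, Lemma~\ref{dual-S} gives $w_\teta^\star = S(\omega_\teta^\ddual)$, and Corollary~\ref{dual-fundamental} rewrites this as $S(\omega_{\teta^\ddual} + \twist{\omega_\teta})$.

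The key observation — and the only nontrivial step — is that the twist $\twist{\omega_\teta}$ lies in $(\thetaR^\vee)^\perp$, the pointwise fixed subspace of $\Weyl[\thetaR]$. Therefore for every $w\in\Weyl[\thetaR]$ we have $w.(\omega_{\teta^\ddual}+\twist{\omega_\teta}) = w.\omega_{\teta^\ddual}+\twist{\omega_\teta}$, so the orbit sum factors as
\[
S(\omega_{\teta^\ddual}+\twist{\omega_\teta}) = e^{\twist{\omega_\teta}}\cdot S(\omega_{\teta^\ddual}) = e^{\twist{\omega_\teta}}\cdot w_{\teta^\ddual}.
\]
Expanding $\twist{\omega_\teta}=\sum_\beta m_\beta^\teta \omega_\beta$ then yields $e^{\twist{\omega_\teta}}=\prod_\beta x_\beta^{m_\beta^\teta}$, and the desired formula for $w_\teta^\star$ follows. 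I expect no genuine obstacles; the only thing to check carefully is that $\twist{\omega_\teta}$ really is $\Weyl[\thetaR]$-invariant, but this is built into its construction via the projection $\prii$ onto $(\thetaR^\vee)^\perp_\Q$.
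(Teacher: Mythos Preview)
Your proof is correct and follows essentially the same route as the paper: both reduce to Lemma~\ref{dual-S} and Corollary~\ref{dual-fundamental}, and the key step in each is the factorization $S(\omega_{\teta^\ddual}+\twist{\omega_\teta}) = e^{\twist{\omega_\teta}}\cdot S(\omega_{\teta^\ddual})$, valid precisely because $\twist{\omega_\teta}\in(\thetaR^\vee)^\perp$ is fixed by $\Weyl[\thetaR]$. The paper phrases this factorization as the special case of $S(\delta+\delta')=S(\delta)S(\delta')$ when the Weyl group acts trivially on one factor, but the content is identical.
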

In the following, we will sometimes simply write \(w_\teta^\star = \omega_{\teta^\ddual}\cdot x^{\vec m^\teta}\), \ie we use the  conventions \(\vec m^\teta := (m^\teta_\beta)_{\beta\in\nthetaR}\) and \(\cramped{x^{\vec m^\teta} := \prod_{\beta}x_\beta^{m_\beta^\teta}}\).
\begin{proof}
  Recall that under the given isomorphism, \(w_\teta\) corresponds to \(S(\omega_\teta)\) and \(x_\beta\) corresponds to \(S(\omega_\beta) = e^{\omega_\beta}\). By \Cref{dual-S} and \Cref{dual-fundamental} we have \(S(\omega_\teta)^* = S(\omega_\teta^\ddual) = S(\omega_{\teta^\ddual} + \twist{\omega_\teta})\).  In general, we have the formula \(S(\delta+\delta') = S(\delta)S(\delta') + (\text{smaller terms})\).
However, when the Weyl group acts trivially on one of \(\delta,\delta'\), there are no smaller terms and we simply have \(S(\delta+\delta') = S(\delta)S(\delta')\). Thus, given that \(\Weyl[\thetaR]\) acts trivially on \((\thetaR^\vee)^\perp\), we obtain
\begin{align*}
S(\omega_{\teta^\ddual} + \twist{\omega_\teta})
&= S(\omega_{\teta^\ddual})\cdot S(\twist{\omega_\teta})\\
&= S(\omega_{\teta^\ddual})\cdot S(\textstyle\sum_{\beta\in\nthetaR} m_\beta^\teta \omega_\beta)\\
&= S(\omega_{\teta^\ddual})\cdot\prod S(\omega_\beta)^{m_\beta^\teta}.\qedhere
\end{align*}
\end{proof}

\encouragepagebreak
\section{Tate cohomology}
This section contains the heart of our computations.
After introducing a notion of twisted Tate cohomology in \Cref{sec:twisted-tate}, in \Cref{sec:tate-of-twisted} we compute this cohomology for any `twisted polynomial ring', a ring with involution that has the same formal description as the representation ring of a parabolic subgroup (\cf \Cref{dual-on-repring}).  In particular, we will see that the twisted Tate cohomology groups are either zero or isomorphic to the untwisted ones.  In \Cref{sec:tate-of-quotients}, this dichotomy is generalized to the Tate cohomology of certain quotients of twisted polynomial rings.  The result applies in particular to the K-ring of a flag variety (\cf \Cref{dichotomy-for-tate}).

\subsection{Twisted Tate cohomology}\label{sec:twisted-tate}
We use the same terminology for rings with involution as in \cite{Me:KOFF}.
In particular, a \deffont{\star-module} is an abelian group equipped with an involution \(\star\) which is a group isomorphism, a \deffont{\star-ring} is a commutative unital ring equipped with an involution \(\star\) which is a ring isomorphism, and a \deffont{\star-ideal} in a \star-ring \( {A} \) is an ideal preserved by \star. We say that a {\star}-ideal \(\ideal a\subset A\) is generated by certain elements \(a_1,\dots,a_n\) if it is generated as an ideal in the usual sense by the elements \(a_1,a_1^\star,\dots,a_n,a_n^\star\).

More generally, given a \star-ring \((A,\star)\), an \deffont{\((A,\star)\)-module} is an \(A\)-module \(M\) together with an additive involution \(\star\) that satisfies \((am)^\star = a^\star m^\star\) for all \(a\in A\) and \(m\in M\).  Thus, any \star-module is a \((\Z,1)\)-module, where \(1\) denotes the trivial involution on \(\Z\), and any \star-ideal in \((A,\star)\) is an \((A,\star)\)-module.

An element \(x\) of a \star-module \((M,\star)\) is \deffont{\star-self-dual} if \(x^*=x\) and \deffont{\star-anti-self-dual} if \(x^*=-x\). The \deffont{Tate cohomology} groups of \((M,\star)\) can be defined ad hoc as follows:
\begin{align*}
   h^+(M,\star)&:=\frac{\ker(\id-\;\star)}{\im(\id+\;\star)}\\
   h^-(M,\star)&:=\frac{\ker(\id+\;\star)}{\im(\id-\;\star)}
\end{align*}
We write \(h^\total(M,\star)\) for the \(\Z/2\)-graded abelian group \(h^+(M,\star)\oplus h^-(M,\star)\).

Given an \((A,\star)\)-module \(M\) and a unit \(l\in A^\times\) such that \(l^* = l^{-1}\), we  define the \deffont{\(l\)-twisted involution} \(\star_l\) on \(M\) by
\[
m^{\star_l} := m^*l.
\]
\begin{lem}\label{tate-rings-and-modules}
Let \((A,\star)\) be a \star-ring, and let \(l_1,l_2\in A^\times\) be units such that \(l_i^* = l_i^{-1}\).
Then for any \((A,\star)\)-module \((M,\star)\), the \(A\)-module structure on \(M\) induces a homomorphism of \(\Z/2\)-graded groups
\begin{align*}
h^\total(A,\star_{l_1}) \otimes h^\total(M,\star_{l_2}) &\to h^\total(M,\star_{l_1l_2})\\
[a]\quad\otimes\quad[m]\quad\quad&\mapsto\quad\quad[am]
\end{align*}
In particular, \(h^\total(A,\star)\) is a \(\Z/2\)-graded ring, and \(h^\total(M,\star_{l_1})\) is a graded \(h^\total(A,\star)\)-module.\qed
\end{lem}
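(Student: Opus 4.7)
The plan is to verify in turn that (i) the pairing $[a]\otimes[m]\mapsto [am]$ respects the $\Z/2$-grading, (ii) it descends from representatives to Tate cohomology classes, and (iii) the ring and module structures are special cases. This is really a routine bookkeeping exercise; the hypothesis $l_i^\star = l_i^{-1}$ is only used implicitly to ensure that all the twisted involutions in sight are genuine involutions, and the axiom $(am)^\star = a^\star m^\star$ of an $(A,\star)$-module does essentially all the work.

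For (i), I would pick representatives $a\in A$, $m\in M$ with $a^{\star_{l_1}} = \epsilon\, a$ and $m^{\star_{l_2}} = \delta\, m$ for signs $\epsilon,\delta\in\{\pm 1\}$, and compute
\[
(am)^{\star_{l_1 l_2}} = (am)^\star\, l_1 l_2 = a^\star m^\star\, l_1 l_2 = (a^\star l_1)(m^\star l_2) = a^{\star_{l_1}} m^{\star_{l_2}} = \epsilon\delta\,(am),
\]
using commutativity of $A$ and the compatibility of $\star$ with the $A$-action. Hence $am$ represents a class of degree $\epsilon\delta$, exactly matching the product of the degrees of $[a]$ and $[m]$ under the correspondence $+\leftrightarrow+1$, $-\leftrightarrow-1$.

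For (ii), suppose $a = b + \epsilon\, b^{\star_{l_1}}$ is a boundary in $h^\epsilon(A, \star_{l_1})$. Setting $c := bm$, the same computation gives
\[
c^{\star_{l_1 l_2}} = (bm)^\star\, l_1 l_2 = (b^\star l_1)(m^\star l_2) = b^{\star_{l_1}} m^{\star_{l_2}} = \delta\, b^{\star_{l_1}} m,
\]
so that $am = bm + \epsilon\, b^{\star_{l_1}} m = c + \epsilon\delta\, c^{\star_{l_1 l_2}}$ is a boundary in $h^{\epsilon\delta}(M, \star_{l_1 l_2})$. The argument for a boundary in $m$ is entirely symmetric, so the pairing on representatives descends to a well-defined pairing of $\Z/2$-graded groups.

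For (iii), I would specialize: taking $l_1 = l_2 = 1$ gives a pairing $h^\total(A,\star)\otimes h^\total(A,\star) \to h^\total(A,\star)$, which together with the class $[1]\in h^+(A,\star)$ (well-defined since $1^\star = 1$) and associativity inherited from $A$ makes $h^\total(A,\star)$ into a $\Z/2$-graded ring. Taking $l_1 = 1$ and $l_2 = l$ analogously equips $h^\total(M,\star_l)$ with a graded $h^\total(A,\star)$-module structure, associativity again being inherited from the $A$-action on $M$. The main ``obstacle'' here is purely notational: one must track that $(l_1 l_2)^\star = l_1^{-1} l_2^{-1} = (l_1 l_2)^{-1}$, so that the product twist $l_1 l_2$ really does define an involution and the target of the pairing is a legitimate Tate cohomology group.
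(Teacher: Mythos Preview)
Your proof is correct and is precisely the direct verification the paper has in mind: the lemma is stated with a \qed\ and no proof, and \Cref{rem:tate} notes that it ``may as easily be checked directly as [\dots] obtained by specialization from more general results in the literature.'' Your argument is that direct check.
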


The rings and modules we will be considering in the following will usually come equipped with some `default' involution \(\star\).
We then simply write
\begin{align*}
  h^\total(A) &:= h^\total(A,\star)\\
  h^\total(M) &:= h^\total(M,\star)
  &
  h^\total(M,l) &:= h^\total(M,\star_l)
\end{align*}
and refer to \(h^\total(M,l)\) as the \deffont{\(l\)-twisted Tate cohomology} of \(M\).  The following lemmas will be used frequently.

\begin{lem}\label{tate-exact-sequence}
  Let
  \(
  0 \to (M',\star) \hookrightarrow (M,\star) \xrightarrow{p} (M'',\star) \to 0
  \)
  be a short exact sequence of \star-modules, \ie a sequence of \star-modules and -morphisms such that the underlying sequence of abelian groups is exact.  Then we have an associated six-periodic long exact sequence of Tate cohomology groups
  \[
   \xymatrix{
   {h^+(M')} \ar[r] & {h^+(M)} \ar[r] & {h^+(M'')} \ar[d]^{\partial} \\
   \ar[u]^{\partial} {h^-(M'')} & \ar[l] {h^-(M)} & \ar[l] {h^-(M')}
   }
  \]
  with boundary map given by
  \[
  \partial[p(m)] = \begin{cases}
    [m-m^\star] & \text{ for } [p(m)]\in h^+(M'')\\
    [m+m^\star] & \text{ for } [p(m)]\in h^-(M'').
  \end{cases}
  \]
\end{lem}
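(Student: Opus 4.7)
The plan is to realise $h^+$ and $h^-$ as the cohomology groups of a single two-periodic cochain complex, and then derive the six-term exact sequence from the standard long exact sequence in cohomology. First I would associate to any $\star$-module $(M,\star)$ the two-periodic cochain complex $C^\bullet(M)$ with $C^n(M)=M$ in every degree and differentials alternating between $\id-\star$ in even degrees and $\id+\star$ in odd degrees. The identity $(\id-\star)(\id+\star)=\id-\star^2=0$ (and its symmetric counterpart) shows that $C^\bullet(M)$ is a genuine complex, and by construction $H^n(C^\bullet(M))=h^+(M)$ for $n$ even and $h^-(M)$ for $n$ odd.

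Next, the short exact sequence $0\to M'\to M\to M''\to 0$ of $\star$-modules evidently yields a short exact sequence of complexes $0\to C^\bullet(M')\to C^\bullet(M)\to C^\bullet(M'')\to 0$, since exactness in each degree reduces to the exactness of the original sequence and the differentials are obtained from the $\star$-equivariant maps. Applying the standard long exact sequence in cohomology then produces a long exact sequence of Tate cohomology groups which, by construction, inherits two-periodicity from the complex itself; cutting out six consecutive terms gives the periodic hexagon in the statement.

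Finally, the formula for the boundary map follows by tracing through the snake-lemma construction: given $[p(m)]\in h^\pm(M'')$, lift $p(m)$ to $m\in M$, apply the differential, and identify the result with an element of $M'$ via the injection $M'\hookrightarrow M$. In even degrees the differential is $\id-\star$, so $\partial[p(m)]=[m-m^\star]\in h^-(M')$; in odd degrees it is $\id+\star$, so $\partial[p(m)]=[m+m^\star]\in h^+(M')$, exactly as stated. The only real obstacle is bookkeeping --- matching the Tate cohomology sign conventions with the cochain-complex conventions so that the parities line up correctly --- but this amounts to a direct verification.
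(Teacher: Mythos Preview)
Your argument is correct and is the standard way to establish this six-term sequence: package the Tate groups as the cohomology of the two-periodic complex with differentials $\id\mp\star$, observe that a short exact sequence of $\star$-modules gives a short exact sequence of such complexes, and read off the boundary map from the snake lemma. There is nothing to add.

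The paper itself does not give a proof of this lemma. It is stated without argument, and \Cref{rem:tate} simply remarks that it ``may as easily be checked directly as [it] may be obtained by specialization from more general results in the literature'' (namely the Tate cohomology of cyclic groups as in \cite{CartanEilenberg}). Your cochain-complex realisation is exactly such a direct check, and in fact makes precise what ``checked directly'' would mean here.
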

We may alternatively view the long exact Tate cohomology sequence as a three-periodic sequence of \(\Z/2\)-graded groups
  \begin{equation*}
    \dots \xrightarrow{\partial} h^\total(M') \to h^\total(M) \to h^\total(M'') \xrightarrow{\partial} h^\total(M') \to h^\total(M) \to h^\total(M'') \xrightarrow{\partial} \dots
  \end{equation*}
  with boundary map \(\partial\) of degree one.
  If \((M',\star)\), \((M,\star)\) and \((M'',\star)\) are modules over some \star-ring \((A,\star)\), and if the morphisms in the short exact sequence are morphisms of \(A\)-modules, then this long exact sequence is a sequence of graded \(h^\total(A,\star)\)-modules.

\begin{lem}\label{tate-tensor-decomposition}
Let \(M\) and \(N\) be \star-modules.  If one of \(M\) and \(N\) is free as an abelian group, then the canonical map
\[
h^\total(M)\otimes h^\total(N) \rightarrow h^\total(M\otimes N)
\]
is an isomorphism of \(\Z/2\)-graded abelian groups.
\end{lem}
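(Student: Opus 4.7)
The plan is to reduce the assertion to three elementary cases via the classification of integral representations of \(\Z/2\).  Without loss of generality, I would assume that \(N\) is the free abelian group.  The key input is the classical fact that any \(\Z[\Z/2]\)-module which is free as an abelian group decomposes as a direct sum of three indecomposable types:  the trivial module \(\Z_+\) (involution \(+\id\)), the sign module \(\Z_-\) (involution \(-\id\)), and the regular module \(\Z[\Z/2]\), a rank-two free abelian group whose involution swaps the two basis elements.  For finitely generated \(N\) this is a theorem of Reiner; the general case follows by a standard direct-sum argument.

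Next I would note that both sides of the canonical map are additive in \(N\):  the right-hand side because \(M \otimes_{\Z} -\) and Tate cohomology both commute with direct sums, and the left-hand side because \(h^\pm\) also commutes with direct sums, being defined as a quotient of a kernel by an image.  This reduces the verification to the three basic cases above.

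For \(N = \Z_+\) the map reduces to the tautological isomorphism \(h^\total(M) \otimes \Z/2 \cong h^\total(M)\), using \(M \otimes \Z_+ = M\), the identification \(h^\total(\Z_+) = \Z/2\) concentrated in degree \(+\), and the fact that \(h^\total(M)\) is always \(2\)-torsion.  For \(N = \Z_-\) the induced involution on \(M \otimes N\) is \(-\star\); a direct calculation from the definitions gives \(h^\pm(M, -\star) \cong h^\mp(M, \star)\), and combined with \(h^\total(\Z_-) = \Z/2\) in degree \(-\) this yields the desired isomorphism with a degree shift.  For \(N = \Z[\Z/2]\) both sides vanish:  \(h^\total(\Z[\Z/2]) = 0\) by direct inspection, while \(M \otimes \Z[\Z/2]\) with its diagonal involution becomes, under the untwisting isomorphism \(m \otimes g \mapsto g^{-1}m \otimes g\), the module \(\Z[\Z/2] \otimes M\) with involution acting only on the first factor, which is readily checked to have vanishing Tate cohomology for any \(M\).

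The main obstacle I expect is the decomposition step.  This is classical input from the representation theory of \(\Z[\Z/2]\)-lattices but needs to be cited or reproven carefully, particularly in the infinite-rank case.  Modulo that classical input, the rest is just the short computation sketched above, and the multiplicativity of the canonical map ensures that the resulting isomorphism is one of \(\Z/2\)-graded groups.
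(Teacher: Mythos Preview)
Your proposal is correct and follows essentially the same approach as the paper: decompose the free \(\star\)-module into copies of \(\Z_+\), \(\Z_-\), and the swap module \(\Z\oplus\Z\), then verify the three elementary cases.  The paper's proof is terser---it simply says the three cases are ``easily checked'' and cites Bousfield \cite{Bousfield:1990}*{Prop.~3.7} for the decomposition (covering the infinite-rank case directly)---whereas you give the explicit verification and invoke Reiner plus a direct-sum argument; but the strategy is the same.
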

\begin{proof}
  The lemma is easily checked in case \(M\) is one of the following three \star-modules:  \(\Z\) with trivial involution, \(\Z\) with involution given by multiplication with \(-1\), \(\Z\oplus\Z\) with involution interchanging the two summands.  In general, any \star-module which is free as an abelian group is a direct sum of such simple \star-modules \cite{Bousfield:1990}*{Prop.~3.7}.
\end{proof}

\begin{rem}\label{rem:tate}
  More generally, for any finite group \(\Gamma\) acting on an abelian group \(M\), Tate introduced cohomology groups \(\hat H^i(\Gamma,M)\) indexed by \(i\in\Z\).  When \(\Gamma\) is cyclic, the cup product with a generator of \(H^2(\Gamma,\Z)\) (\(\Gamma\) acting trivially on \(\Z\)) induces isomorphisms \[\hat H^i(\Gamma,M)\cong \hat H^{i+2}(\Gamma,M),\] so the total Tate cohomology group \(\hat H^\total(\Gamma,M)\) may be collapsed to a \(\Z/2\)-graded group \cite{CartanEilenberg}*{Chapter~XII, \S\S~7 and 11}.  For \(\Gamma=\Z/2\), this yields the \(\Z/2\)-graded group defined above.  \Cref{tate-rings-and-modules,tate-exact-sequence} may as easily be checked directly as they may be obtained by specialization from more general results in the literature.
\end{rem}

\subsection{Tate cohomology of twisted polynomial rings}\label{sec:tate-of-twisted}
We use the term \deffont{twisted polynomial ring} to refer to a \star-ring \((A,\star)\) of the following form:
\begin{compactitem}
\item As a ring, \(A\) is isomorphic to a tensor product of a polynomial ring and a ring of Laurent polynomials
  \[ A \cong \Z\left[\gamma_i,\gamma_i', \mu_j, x_k^{\pm 1} \mid i\in I, j\in J, k\in K\right] \]
\item This isomorphism can be chosen such that the duality on \(A\) has the following form:
  \begin{align*}
    \gamma_i^\star &= \gamma_i'\cdot x^{\vec c^i} && \text{ for certain \(\vec c^i \in \Z^{\abs K} \)} &&\\
    \mu_j^\star &= \mu_j\cdot x^{\vec m^j}     && \text{ for certain \(\vec m^j \in \Z^{\abs K} \)} &&\\
    x_k^\star &= x_k^{-1}
  \end{align*}
\end{compactitem}

Here and in the following, we use a bold-face letter \(\vec c\) to denote a tuple with entries \(c_k\), and we define
\[
 x^{\vec c} := \prod_k x^{c_k}.
\]
We think of the \(\mu_i\) as `self-dual' generators and of the \(\gamma_i\) as  `non-self-dual' generators, except that they carry `individual  twists' \(\vec c^i\) and \(\vec m^i\).
By \Cref{dual-on-repring}, the representation ring of any parabolic subgroup of a simply connected split semisimple algebraic group is a twisted polynomial ring in our sense:  we only need to take
\begin{align*}
  &\mu_\teta := w_\teta && \text{ for each \(\teta\) with } \teta^\ddual = \teta \quad &&(\text{so } \vec m^\teta = \vec m^\teta)\\
  &\left.\begin{aligned}
      \gamma_\teta &:= w_\teta \\
      \gamma_\teta' &:= w_{\teta^\ddual}
    \end{aligned}
  \right\} &&\text{ for each pair \((\teta,\teta^\ddual)\) with \(\teta^\ddual \neq \teta\) } \quad &&(\text{so } \vec c^\teta = \vec m^\teta)
\end{align*}

Our aim is to describe \( h^\total(A,x^{\vec t}) \) for any twisted polynomial ring \((A,\star)\) as above and any twist \(x^\vec t\) (\(\vec t\in\Z^{\abs K}\)). As we will see, these groups essentially depend only the twists \(\vec m^j \) of the `self-dual' generators and on the global twist \( \vec t \), or rather on their reductions modulo two \(\bar{\vec m}^j \), \(\bar{\vec t} \in (\Z/2)^{\abs K}\).  First, we have a simple criterion when \( h^\total(A,x^{\vec t}) \) vanishes:

\begin{prop}\label{van:prop:vanishing}
  The twisted Tate cohomology module \( h^\total(A, x^{\vec t})\) is non-zero if and only if \(\bar{\vec t}\) is contained in the subspace of \((\Z/2)^{\abs K}\) generated by the \(\bar{\vec m}^j\).
\end{prop}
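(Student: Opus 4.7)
The plan is to work at the level of $\Z$-bases of $A$ and decompose the involution into a direct sum of very simple pieces.

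First I would fix the obvious $\Z$-basis of $A$ consisting of the monomials
\[
e(\vec\alpha,\vec d, \vec a, \vec b) := x^{\vec\alpha}\,\mu^{\vec d}\,\gamma^{\vec a}\,\gamma'^{\vec b},
\]
indexed by $(\vec\alpha,\vec d,\vec a,\vec b)\in \Z^{|K|}\times\N^{|J|}\times \N^{|I|}\times \N^{|I|}$. A direct computation from the given rules $\gamma_i^\star=\gamma_i'x^{\vec c^{\,i}}$, $\mu_j^\star=\mu_j x^{\vec m^{\,j}}$, $x_k^\star=x_k^{-1}$ shows
\[
e(\vec\alpha,\vec d,\vec a,\vec b)^{\star_{x^{\vec t}}}
= e\bigl(\vec\alpha',\vec d,\vec b,\vec a\bigr),\qquad
\vec\alpha' = -\vec\alpha + \vec t + \textstyle\sum_j d_j\vec m^{\,j} + \sum_i (a_i+b_i)\vec c^{\,i}.
\]
Thus $\star_{x^{\vec t}}$ permutes the chosen basis: each orbit is either a two-element swap pair, or a singleton (a fixed basis vector).

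Next I would identify exactly when fixed basis vectors exist. A singleton requires $\vec a=\vec b$ and $2\vec\alpha=\vec t+\sum_j d_j\vec m^{\,j}+2\sum_i a_i\vec c^{\,i}$. The right-hand side is an even vector in $\Z^{|K|}$ if and only if $\vec t+\sum_j d_j\vec m^{\,j}\equiv 0\pmod 2$; when this holds, we may solve for $\vec\alpha$, and fixed monomials do exist. Hence there is at least one fixed basis vector if and only if there is $\vec d\in\N^{|J|}$ with $\bar{\vec t}=\sum_j \bar d_j\,\bar{\vec m}^{\,j}$ in $(\Z/2)^{|K|}$, which is exactly the condition that $\bar{\vec t}$ lie in the $\F_2$-subspace generated by the $\bar{\vec m}^{\,j}$.

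Finally I would translate this orbit picture into Tate cohomology. As a $\Z$-module with involution, $A$ splits as a direct sum of summands of two types: copies of $(\Z,\mathrm{id})$ (one for each singleton orbit) and copies of $(\Z\oplus\Z,\text{swap})$ (one for each swap pair). Tate cohomology commutes with direct sums, and the two building blocks have the well-known values
\[
h^+(\Z,\mathrm{id})=\Z/2,\quad h^-(\Z,\mathrm{id})=0,\qquad h^\total(\Z\oplus\Z,\text{swap})=0
\]
(these are the $\star$-modules listed in Lemma~\ref{tate-tensor-decomposition}). Therefore
\[
h^+(A,x^{\vec t})\;=\;\bigoplus_{\text{fixed monomials}}\Z/2,\qquad h^-(A,x^{\vec t})=0,
\]
which is non-zero if and only if a fixed monomial exists, i.e.\ if and only if $\bar{\vec t}$ lies in the span of the $\bar{\vec m}^{\,j}$. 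The main obstacle is really just bookkeeping in the orbit computation; once the formula for $\vec\alpha'$ is in place, the only genuine point is noticing that the twists $\vec c^{\,i}$ attached to the non-self-dual generators never contribute (their exponents appear in the even combination $a_i+b_i$), so only the $\bar{\vec m}^{\,j}$ govern vanishing.
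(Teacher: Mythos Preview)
Your proof is correct and follows essentially the same approach as the paper: both arguments observe that the twisted involution permutes a monomial $\Z$-basis and reduce the non-vanishing question to the existence of a fixed monomial, which in turn reduces to the mod~$2$ linear system $\bar{\mat M}\bar{\vec j}=\bar{\vec t}$. The only organizational difference is that the paper first factors $(A,\star_{\vec t})$ as a tensor product separating the $\gamma_i,\gamma_i'$ generators from the $\mu_j,x_k^{\pm1}$ generators (via \Cref{tate-tensor-decomposition}), so that the $\gamma$-part contributes a visibly non-trivial factor $\Z/2[\gamma_i\gamma_i^\star]$ and only the smaller ring $A'=\Z[\mu_j,x_k^{\pm1}]$ needs its fixed monomials analysed; you instead keep all generators together and note directly that the $\vec c^{\,i}$ drop out of the fixed-point equation because $\vec a=\vec b$ forces the coefficient $a_i+b_i=2a_i$ to be even. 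Both routes yield the same criterion with the same amount of work; the paper's factorization is marginally more convenient when one goes on to describe the full ring structure in the next proposition, while your all-at-once computation is slightly more self-contained for the bare vanishing statement.
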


We can rephrase the proposition as follows. Let \( \mat M:= (\vec m^1, \dots, \vec m^{\abs{J}}) \) be the \(\abs{K}\times\abs{J}\)-matrix whose columns are given by the vectors \(\vec m^j\), and let \( \bar{\mat M} \) be its reduction modulo two.  The proposition says that  \( h^\total(A, x^{\vec t})\) is non-zero if and only if the inhomogeneous linear system of equations
\begin{equation}\label{van:eq:LSEred}
  \bar{\mat M}\cdot{\bar{\vec j}} = \bar{\vec t} \tag{$\overline{\mathrm{I}}$}
\end{equation}
has some solution \(\bar{\vec j}\in (\Z/2)^{\abs J}\).
Note that any vector \(\bar{\vec j}\in (\Z/2)^{\abs J}\) can be lifted to a tuple with coefficients in \(\N_0\), and that there is a unique lift with coefficients in \(\{0,1\}\).  We call the lift \(\vec j\in\{0,1\}^{\abs J}\) \deffont{minimal}.

\begin{prop}\label{van:prop:description}
  Let  \(\bar{\mathcal S}\subset (\Z/2)^{\abs J}\) be a basis of the space of solutions of the homogeneous part of \eqref{van:eq:LSEred}, and let \(\mathcal S\subset \{0,1\}^{\abs J}\) be the set of minimal lifts of these basis elements.
  The untwisted Tate cohomology ring of \( A \) is concentrated in even degree and can be written as
  \[
  h^\total(A )
  = \Z/2\left[\gamma_i\gamma_i^\star \mid i\in I \right]
  \otimes
  \frac{\Z/2\left[\nu_{\vec{j}}, \sigma_j \mid \vec j\in\mathcal S, j\in J\right]}
  {\left(\nu_{\vec j}^2 = \sigma^{\vec j} \mid \vec j \in \mathcal S\right)}
  \]
  with \(\nu_{\vec j} := \mu^{\vec j} x^{\frac{1}{2}\mat M\vec j} \) and \(\sigma_j := \mu_j^2 x^{\vec m^j}\).
\end{prop}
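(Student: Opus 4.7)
The plan is first to compute $h^\total(A)$ as a $\Z/2$-module directly from the $\star$-orbit decomposition of the monomial $\Z$-basis of $A$, then to match the resulting algebra structure with the claimed presentation.

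The key observation is that $\star$ sends every monomial of $A$ to a monomial with coefficient $+1$: explicitly,
\[
(\gamma^{\vec a}(\gamma')^{\vec b}\mu^{\vec d}x^{\vec e})^\star = \gamma^{\vec b}(\gamma')^{\vec a}\mu^{\vec d}x^{(\vec a+\vec b)\cdot\vec c+\mat M\vec d-\vec e}.
\]
Hence $A$ splits as a $\star$-module into a direct sum over the $\star$-orbits of monomials. A pair orbit $\{m,m^\star\}$ contributes nothing to Tate cohomology, while a singleton orbit $\{m\}$ contributes $\Z/2$ to $h^+$ only. This immediately yields $h^-(A) = 0$, giving the concentration in even degree, and identifies $h^+(A)$ as the free $\Z/2$-module on the set of fixed monomials. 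Since products of fixed monomials are fixed, $h^+(A)$ inherits a commutative $\Z/2$-algebra structure.

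A direct check shows that $\gamma^{\vec a}(\gamma')^{\vec b}\mu^{\vec d}x^{\vec e}$ is fixed precisely when $\vec a = \vec b$, $\mat M\vec d$ is even, and $\vec e = \sum_i a_i\vec c^i + \tfrac{1}{2}\mat M\vec d$. Setting $\eta_i := \gamma_i\gamma_i^\star$, the algebra $h^+(A)$ decomposes as $\Z/2[\eta_i\mid i\in I] \otimes \Z/2[M]$, where $M := \{\vec d\in\N_0^{\abs J} : \bar{\mat M}\bar{\vec d} = 0\}$ is a submonoid of $\N_0^{\abs J}$. The elements $\sigma_j = \mu_j^2x^{\vec m^j}$ correspond to the doubled sublattice $2\N_0^{\abs J}\subset M$, and the elements $\nu_{\vec j} = \mu^{\vec j}x^{\mat M\vec j/2}$ (for $\vec j\in\mathcal S$) lift an $\F_2$-basis of the quotient $\bar M$. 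The relation $\nu_{\vec j}^2 = \sigma^{\vec j}$ is immediate, since both sides expand to $\mu^{2\vec j}x^{\mat M\vec j}$.

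The main technical obstacle is to check that the presented ring $\Z/2[\nu_{\vec j},\sigma_j]/(\nu_{\vec j}^2 = \sigma^{\vec j})$ maps isomorphically onto $\Z/2[M]$. The approach is to reduce every monomial in the $\nu$'s and $\sigma$'s to a canonical form in which each $\nu_{\vec j}$-exponent lies in $\{0,1\}$, using the stated relations. For surjectivity, one writes a given $\vec d \in M$ as $\vec d = \sum_{\vec j\in\mathcal S}e_{\vec j}\vec j + 2\vec b$, determining the $e_{\vec j}\in\{0,1\}$ uniquely from the expansion of $\bar{\vec d}$ in the basis $\bar{\mathcal S}$ and absorbing the even remainder into the $\sigma_j$; for injectivity, the $\F_2$-linear independence of $\bar{\mathcal S}$ guarantees that distinct canonical forms have distinct images. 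Granted this ring-theoretic identification, tensoring with $\Z/2[\eta_i]$ recovers the full formula for $h^\total(A)$ and completes the proof.
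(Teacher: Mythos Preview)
Your argument parallels the paper's closely: both compute $h^+$ as the $\Z/2$-span of the $\star$-fixed monomials, factor off the polynomial ring on the classes $\gamma_i\gamma_i^\star$, reduce to the monoid $M=\{\vec d\in\N_0^{\abs J}:\bar{\mat M}\bar{\vec d}=0\}$, and then attempt to present $\Z/2[M]$ via the generators $\nu_{\vec j}$ and $\sigma_j$. Your reduction to canonical form and your injectivity argument are sound.

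The surjectivity step, however, contains a genuine gap (one that the paper's own proof shares). You write $\vec d = \sum_{\vec j\in\mathcal S}e_{\vec j}\vec j + 2\vec b$ with the $e_{\vec j}\in\{0,1\}$ determined by the expansion of $\bar{\vec d}$ in the basis $\bar{\mathcal S}$, and then ``absorb the even remainder into the $\sigma_j$''. The remainder indeed lies in $2\Z^{\abs J}$, but nothing forces it into $2\N_0^{\abs J}$. Concretely, take $\abs I=0$, $\abs J=3$, $\abs K=1$ with $\vec m^1=\vec m^2=\vec m^3=1$, so that $\bar{\mat M}=(1\;1\;1)$ and $\ker\bar{\mat M}$ is two-dimensional. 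For $\mathcal S=\{(1,1,0),(0,1,1)\}$ the element $\vec d=(1,0,1)\in M$ produces the remainder $(0,-2,0)$, and the fixed monomial $\mu_1\mu_3 x\in h^+(A)$ is \emph{not} in the subring generated by $\nu_{(1,1,0)},\nu_{(0,1,1)},\sigma_1,\sigma_2,\sigma_3$. By the evident $S_3$-symmetry of this example no choice of basis repairs the problem, so the proposition as literally stated already fails here. A corrected formulation would let $\mathcal S$ consist of the minimal lifts of \emph{all} nonzero elements of $\ker\bar{\mat M}$, with the relations enlarged to include the obvious multiplicative identities among the resulting $\nu_{\vec j}$.
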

\begin{prop}\label{van:prop:twist}
  If \(h^\total(A,x^{\vec t})\) is non-zero,
  there is an element \(\zeta_{\vec t}\in h^+(A,x^{\vec t})\) such that multiplication with \(\zeta_{\vec t}\) induces an isomorphism
  \[
  h^\total(A)\xrightarrow{\cong}h^\total(A,x^{\vec t}).
  \]
  Moreover, if both \(h^\total(A,x^{\vec t_1})\) and \(h^\total(A,x^{\vec t_2})\) are non-zero, then so is \(h^\total(A,x^{\vec t_1+\vec t_2})\) and we may take \(\zeta_{\vec t_1+\vec t_2} = \zeta_{\vec t_1}\cdot\zeta_{\vec t_2}\).
\end{prop}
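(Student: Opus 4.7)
The plan is first to exhibit an explicit representative of $\zeta_{\vec t}$ inside $A$ itself, then to identify $h^\total(A,x^{\vec t})$ with $h^\total(A)$ by showing that multiplication with this class realises the bijection between the two spaces of self-dual monomials that underlies \Cref{van:prop:description}.  By the non-vanishing hypothesis together with \Cref{van:prop:vanishing}, choose some $\vec j\in\Z^{|J|}$ whose reduction solves $\bar{\mat M}\bar{\vec j}=\bar{\vec t}$, so that $\vec t+\mat M\vec j\in 2\Z^{|K|}$.  Then set
\[
\zeta_{\vec t} := \mu^{\vec j}\cdot x^{(\vec t+\mat M\vec j)/2}\in A.
\]
A direct computation yields $\zeta_{\vec t}^\star = \mu^{\vec j} x^{(\mat M\vec j-\vec t)/2}$, and hence $\zeta_{\vec t}^\star\,x^{\vec t}=\zeta_{\vec t}$: the class $[\zeta_{\vec t}]$ lies in $h^+(A,x^{\vec t})$, and \Cref{tate-rings-and-modules} then gives a well-defined $h^\total(A)$-module map $h^\total(A)\to h^\total(A,x^{\vec t})$.

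The core of the argument is to show that this map is bijective, and here I would closely follow the strategy that establishes \Cref{van:prop:description}.  The $\star_{x^{\vec t}}$-fixed monomials of $A$ are parameterised by the same combinatorial data as the $\star$-fixed ones, except that the $\mu$-exponent $\vec c$ must satisfy the inhomogeneous condition $\bar{\mat M}\bar{\vec c}=\bar{\vec t}$ rather than $\bar{\mat M}\bar{\vec c}=\bar 0$.  Translation by the chosen particular solution $\vec j$ is a bijection between the two solution sets, and at the level of monomials this translation is realised precisely by multiplication with $\zeta_{\vec t}$, up to the $x$-factor that enforces self-duality.  Checking that this bijection descends to cohomology, i.e.\ that the $\F_2$-basis description of \Cref{van:prop:description} passes through the shift and the relations $\nu_{\vec k}^2=\sigma^{\vec k}$ survive, then yields the isomorphism.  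This is the technical obstacle: injectivity is formal because the shift on exponents is already injective inside $A$, but for surjectivity one must rewrite every $\star_{x^{\vec t}}$-fixed monomial, modulo $\im(1+\star_{x^{\vec t}})$, as $\zeta_{\vec t}$ times a $\star$-fixed monomial, which ultimately reduces to the same combinatorial bookkeeping with the symmetric sums $S(\delta)$ used in the untwisted case.

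For the multiplicativity clause I would fix once and for all an $\F_2$-linear section of $\bar{\mat M}$ over its image, lift it to a homomorphism $\vec t\mapsto \vec j_{\vec t}$ on the corresponding subgroup of $\Z^{|K|}$, and define each $\zeta_{\vec t}$ via these coherent choices; note that the isomorphism statement of the first part is independent of the lift, since any two lifts of the same $\bar{\vec j}$ differ by multiplication by squares of self-dual units $x^{\vec s}$.  A direct expansion then gives
\[
\zeta_{\vec t_1}\cdot\zeta_{\vec t_2} = \mu^{\vec j_{\vec t_1}+\vec j_{\vec t_2}}\cdot x^{(\vec t_1+\vec t_2+\mat M(\vec j_{\vec t_1}+\vec j_{\vec t_2}))/2} = \zeta_{\vec t_1+\vec t_2},
\]
and the non-vanishing of $h^\total(A,x^{\vec t_1+\vec t_2})$ follows at once from the observation that $\bar{\vec j}_{\vec t_1}+\bar{\vec j}_{\vec t_2}$ solves $\bar{\mat M}\bar{\vec j}=\bar{\vec t}_1+\bar{\vec t}_2$.
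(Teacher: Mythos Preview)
Your overall plan coincides with the paper's: choose a particular solution $\vec j_0$ of $\bar{\mat M}\bar{\vec j}=\bar{\vec t}$, set $\zeta_{\vec t}=\mu^{\vec j_0}x^{(\mat M\vec j_0+\vec t)/2}$, and interpret multiplication by this monomial as the affine translation from homogeneous to inhomogeneous solutions. One small slip: you allow $\vec j\in\Z^{|J|}$, but $\mu^{\vec j}\in A$ only when all entries are non-negative; the paper takes the minimal lift in $\{0,1\}^{|J|}$, and you should too.

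The substantive gap is exactly where you place it, in surjectivity, and your appeal to ``the same combinatorial bookkeeping'' does not close it. Translation by $\vec j_0$ is a bijection between the solution sets of the homogeneous and inhomogeneous systems over $\Z^{|J|}\times\Z^{|K|}$, but the monomial bases of $h^+(A)$ and $h^+(A,x^{\vec t})$ are indexed by solutions with $\vec j\in\N_0^{|J|}$, and the translation does \emph{not} restrict to a bijection there. Concretely, take $|J|=2$, $|K|=1$, $\vec m^1=\vec m^2=(1)$ and $\vec t=(1)$. Then $h^+(A,x)$ is the free $\Z/2$-module on the monomials $\mu_1^{j_1}\mu_2^{j_2}x^{(j_1+j_2+1)/2}$ with $j_1+j_2$ odd; its degree-one part (in the $\mu$'s) is two-dimensional, spanned by $\mu_1 x$ and $\mu_2 x$. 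On the other hand the degree-zero part of $h^+(A)$ is just $\Z/2\cdot 1$, so for \emph{any} choice of $\zeta\in h^+(A,x)$ the image $h^+(A)\cdot\zeta$ meets degree one in an at most one-dimensional subspace. Thus no single $\zeta$ can make multiplication surjective, and your proposed bijection of monomials misses, for instance, $\mu_2 x$ when $\vec j_0=(1,0)$. The paper's own proof is just as terse at this point---it records the affine decomposition of solutions and then simply declares $\zeta_{\vec t}:=\mu^{\vec j_0}x^{\vec k_0}$---so you have not omitted anything relative to the paper, but you have correctly identified, without resolving, the place where the argument as written does not go through in the stated generality.
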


\begin{proof}[Proof of \Cref{van:prop:vanishing,van:prop:description}]
  Let \(\star_{\vec t}\) denote the \(x^{\vec t}\)-twisted involution on \(A\).
  As a first reduction, we may factorize \( (A,\star_{\vec t}) \) into a tensor product of \((A,\star)\)-modules:
  \begin{align*}
    (A,\star_{\vec t})
    &= \left( \Z\left[\gamma_i,\gamma_i^{\star_{\vec t}}\mid i\in I \right], \star_{\vec t} \right) \otimes
       \left( \Z\left[\mu_j,x_k^{\pm 1}\mid j\in J, k\in K \right], \star_{\vec t} \right).
    \intertext{%
      As both factors are free as abelian groups, we may apply \Cref{tate-tensor-decomposition} to obtain a corresponding factorization of the Tate cohomology modules:
    }
    h^\total(A,x^{\vec t}) &= \Z/2\left[\gamma_i\gamma_i^*\mid i\in I \right] \otimes h^\total(A',x^{\vec t})
  \end{align*}
  with \(A' := \Z\left[\mu_j, x_k^{\pm 1} \mid j\in J, k\in K\right]\).

  In order to compute the Tate cohomology of \(A'\), we observe that the set of  monomials
  \[
  \left\{ \mu^{\vec j}x^{\vec k} \mid \vec j\in \mathbb{N}_0^{\abs J}, \vec k\in\Z^{\abs K} \right\}
  \]
  forms a basis of \( A' \) over \(\Z\) which is taken into itself by the involution \(\star_{\vec t}\). This implies that \(h^+(A')\) is a \(\Z/2\)-module on those basis elements fixed under the involution, while \(h^-(A')=0\).

  So let \(h:=\mu^{\vec j}x^{\vec k}\) be an arbitrary basis element. We can easily express the fixed-point condition on \( h \) as a linear system of equations in \( (\vec j,\vec k)\in\mathbb{N}_0^{\abs J}\times\Z^{\abs K}\):
  \begin{align*}
    h^{*_{\vec t}} &= h  \notag\\
    \Iff \quad \mat M\vec j - 2\vec k &= -\vec t \tag{$\mathrm{I}$} \label{van:eq:LSE}
  \end{align*}
  Proposition~\ref{van:prop:vanishing} follows from the fact that the linear system \eqref{van:eq:LSE} has a solution if and only if its reduction modulo two \eqref{van:eq:LSEred} has a solution: given a solution \(\bar{\vec j}_0\in(\Z/2)^{\abs J}\) of \eqref{van:eq:LSEred}, we obtain a solution  \( (\vec j_0, \vec k_0) \) of \eqref{van:eq:LSE} by taking \(\vec j_0\) to be some lift of \(\bar{\vec j}_0\) and setting \(\vec k_0 := \frac{1}{2}(\mat M\vec j_0 +\vec t)\).

  Now consider the homogeneous part of the system and its reduction modulo two:
  \begin{align}
    \mat M\vec j - 2\vec k    &= 0 \tag{$\mathrm{H}$} \label{van:eq:LSEhom} \\
    \bar{\mat M} \bar{\vec j} &= 0 \tag{$\overline{\mathrm{H}}$} \label{van:eq:LSEhomred}
  \end{align}
  Again, the space of solutions of \eqref{van:eq:LSEhom} is determined by the space of solutions of \eqref{van:eq:LSEhomred}:
  For any solution \( (\vec j_h, \vec k_h) \) of \eqref{van:eq:LSEhom}, we have \(\vec k_h = \frac{1}{2}\mat M\vec j_h\) and we can write \(\vec j_h\) as \(\vec j_h = \vec j_m + 2\vec r\) for some \(\vec j_m\) with coefficients in \(\{0,1\}\) and some \(\vec r\) with coefficients in \(\N_0\). Then \(\bar{\vec j}_m\) is a solution of \eqref{van:eq:LSEhomred}.
  Conversely, any solution \(\bar{\vec j}_m\) of \eqref{van:eq:LSEhomred} and any \(\vec r\in\N_0^{\abs J}\) determine a solution
  \begin{equation}\label{van:eq:solutions}
  \matrix{\vec j_h \\ \vec k_h} =  \matrix{\vec j_m\\ \frac{1}{2}\mat M\vec j_m} + \matrix{2\vec r\\ \mat M\vec r}
  \end{equation}
  of \eqref{van:eq:LSEhomred}.

  We may view the space of solutions of \eqref{van:eq:LSEhom} as a sub-semigroup of \( \N_0^{\abs J} \times \Z^{\abs K} \). In order to describe generators, we choose a basis \(\bar{\mathcal S}\) of the space of solutions of \eqref{van:eq:LSEhomred} and let \(\mathcal S\) be the set of minimal lifts of these basis vectors, as above. Let \(\vec e_j \) with \( j\in J\) denote the standard basis vectors in \( \N_0^{\abs J} \). Then it follows from \eqref{van:eq:solutions} that a set of generators of the semigroup of solutions of \eqref{van:eq:LSEhom} is given by
  \[
  \left\{ \matrix{\vec j\\ \frac{1}{2}\mat M\vec j}, \matrix{2\vec e_j \\ \vec m^j} \mid \vec j\in\mathcal S, j\in J \right\}
  \]
  These generators correspond to the algebra generators \( \nu_{\vec j}\) and \( \sigma_j \) of \( h^\total(A) \) appearing in \Cref{van:prop:description}.

  It remains to determine the relations between these generators. Suppose
  \[
  \sum_{\vec j\in\mathcal S} a_{\vec j} \matrix{\vec j\\ \frac{1}{2}\mat M\vec j} + \sum_{j\in J}b_j\matrix{2\vec e_j \\ \vec m^j} = 0
  \]
  for certain \( a_{\vec j}, b_j \in \Z \). Since the vectors \(\bar{\vec j}\in\bar{\mathcal S}\) are linearly independent over \(\Z/2\), all coefficients \( a_{\vec j} \) must be even. Conversely, for any choice of even coefficients \( a_{\vec j} \), we obtain a relation by setting
  \[
  b_j := -\left[\sum_{\vec j\in\mathcal S}\frac{a_{\vec j}}{2}\vec j\right]_j
  \]
  where \( [\vec v]_j \) denotes the \(j^{\text{th}}\) component of a tuple \( \vec v \). It follows that the space of relations is generated by the unique relations with \(a_{\vec j}\) equal to \( 2 \) for just one \(\vec j\in\mathcal S\) and equal to \( 0\) for all others, \ie by the relations
  \[
  \left\{ 2\matrix{\vec j\\ \frac{1}{2}\mat M\vec j} = \sum_{j\in J}[\vec j]_j\matrix{2\vec e_j \\ \vec m^j} \mid \vec j\in\mathcal S\right\}.
  \]
  In \( h^\total(A) \), these relations correspond to the relations \( \nu_{\vec j}^2 = \sigma^{\vec j} \).
\end{proof}
\begin{proof}[Proof of \Cref{van:prop:twist}]
  A general solution \( (\vec j_i, \vec k_i) \) of \eqref{van:eq:LSE} will be of the form
  \[
  \matrix{\vec j_i \\ \vec k_i} = \matrix{\vec j_0 \\\vec k_0} + \matrix{\vec j_h \\ \vec k_h},
  \]
  where \( (\vec j_0 ,\vec k_0) \) is some solution of \eqref{van:eq:LSE} with coefficients in \(\{0,1\}\)  and \( (\vec j_h,\vec k_h) \) is a solution of the homogeneous system \eqref{van:eq:LSEhom}.
  Thus, we can take \[
  \zeta_{\vec t} := \mu^{\vec j_0}x^{\vec k_0} = \mu^{\vec j_0}x^{\frac{1}{2}(\mat M\vec j_0 + \vec t)}.
  \]
  The second claim of \Cref{van:prop:twist} also follows from this explicit description of \(\zeta_{\vec t}\).
\end{proof}

\subsection{Tate cohomology of quotients}\label{sec:tate-of-quotients}
In this section, we show how to generalize the results of the previous section to certain quotients of twisted polynomial rings.  More precisely, we show:
\begin{prop}\label{tate-of-quotient}
  Let \((A,\star)\) be a \star-ring,
  and let \(l\in A^\times\) be a unit such that \(l^*= l^{-1}\).
  If \(\zeta\in A\) is \(\star_l\)-self-dual and induces an isomorphism
  \[
  h^\total(A) \xrightarrow[\cdot\zeta]{\cong} h^\total(A,\star_l),
  \]
  then multiplication by \(\zeta\) also induces an isomorphism
  \[
  h^\total\left(\factor{A}{\ideal a}\right) \xrightarrow[\cdot\zeta]{\cong} h^\total\left(\factor{A}{\ideal a},\star_l\right),
  \]
  for any \star-ideal \(\ideal a\subset A\) that can be generated by elements \(\lambda_1,\dots,\lambda_k\) and \star-self-dual elements \(\mu_{1},\dots, \mu_{l}\) such that \(\lambda_1,\lambda_1^*\dots,\lambda_k,\lambda_k^*, \mu_{1},\dots, \mu_{l}\) is a regular sequence in \(A\).
\end{prop}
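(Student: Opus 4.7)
The plan is to reduce $A/\ideal{a}$ to $A$ inductively, killing one element (a self-dual $\mu_j$) or one $\star$-stable pair $(\lambda_i,\lambda_i^\star)$ of the regular sequence at a time. Writing $B := A/\ideal{b}$ for an intermediate quotient by a $\star$-ideal $\ideal{b}$ generated by an initial segment of the regular sequence, the aim is to show that if $\cdot\zeta$ induces an isomorphism $h^\total(B) \xrightarrow{\cong} h^\total(B,\star_l)$, then the same holds after quotienting by the next generator or pair of generators. The base case $\ideal{b} = 0$ is the hypothesis on $A$.

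In the self-dual case, multiplication by $\mu = \mu_j$ is equivariant with respect to both $\star$ and $\star_l$ (since $(\mu b)^\star = \mu b^\star$ and likewise for $\star_l$), and it is a non-zero-divisor on $B$ by regularity. Thus the short exact sequence
\[
0 \to B \xrightarrow{\cdot\mu} B \to B/\mu B \to 0
\]
is one of $(A,\star)$-modules and simultaneously of $(A,\star_l)$-modules. \Cref{tate-exact-sequence} then produces two six-periodic long exact Tate cohomology sequences, connected by a commutative ladder whose vertical arrows are multiplication by $\zeta$. By the inductive hypothesis the $B$-terms are isomorphisms, and the five-lemma yields the isomorphism on $h^\total(B/\mu B)$.

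The crux is the non-self-dual step, where the pair $(\lambda,\lambda^\star)$ must be killed simultaneously since $\lambda B$ alone is not $\star$-stable. Here I would use the Koszul resolution
\[
0 \to \Lambda^2 V \xrightarrow{d_2} V \xrightarrow{d_1} B \to B/(\lambda,\lambda^\star)B \to 0
\]
with $V := Be_1 \oplus Be_2$, $d_1(e_1) := \lambda$ and $d_1(e_2) := \lambda^\star$. Equipping $V$ with the swap involution $e_1^\star := e_2$ and identifying $\Lambda^2 V \cong B$ with the sign-twisted involution $b\mapsto -b^\star$, one checks that $d_1$ and $d_2$ are $\star$-equivariant, and the same structure is compatible with the $l$-twists. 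The regularity of $\lambda,\lambda^\star$ on $B$ makes the complex exact, and $V \cong B \otimes_\Z \Z^2_{\mathrm{swap}}$ has vanishing Tate cohomology (both untwisted and $l$-twisted) by \Cref{tate-tensor-decomposition}. Splitting the Koszul complex into the two short exact sequences
\[
0 \to \Lambda^2 V \to V \to (\lambda,\lambda^\star)B \to 0 \quad\text{and}\quad 0 \to (\lambda,\lambda^\star)B \to B \to B/(\lambda,\lambda^\star)B \to 0,
\]
the vanishing of $h^\total(V)$ turns the first into a boundary isomorphism $h^\total((\lambda,\lambda^\star)B) \cong h^\total(\Lambda^2 V)$; combined with the degree-one boundary and the sign-twist on $\Lambda^2 V$, this identifies $h^\total((\lambda,\lambda^\star)B)$ with $h^\total(B)$ in an $A$-linear and thus $\zeta$-natural way. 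The inductive hypothesis then yields the $\zeta$-isomorphism on $(\lambda,\lambda^\star)B$, and a second application of the five-lemma to the second short exact sequence finishes the inductive step.

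The main obstacle is bookkeeping: verifying that the Koszul differentials, the swap and sign involutions on $V$ and $\Lambda^2 V$, the grading-shifting boundary maps, and the various $l$-twists all interact coherently, so that multiplication by $\zeta$ genuinely provides a morphism from the untwisted to the $\star_l$-twisted version of every object encountered, and so that the identification $h^\total((\lambda,\lambda^\star)B) \cong h^\total(B)$ is natural in $\zeta$. Once these compatibilities are settled, each inductive step reduces transparently to \Cref{tate-exact-sequence}, \Cref{tate-tensor-decomposition}, and the five-lemma.
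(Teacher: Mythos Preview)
Your proof is correct and shares the paper's inductive skeleton: peel off one \(\star\)-self-dual generator or one pair \((\lambda,\lambda^\star)\) at a time, then apply the five-lemma to the long exact Tate sequence of \(0\to\ideal a_{\text{step}}\to B\to B/\ideal a_{\text{step}}\to 0\). The self-dual case is handled identically in both arguments (your sequence \(0\to B\xrightarrow{\cdot\mu}B\to B/\mu B\to 0\) is the paper's \(0\to(\mu)\to B\to B/(\mu)\to 0\) after identifying \((\mu)\cong B\)).

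The genuine difference is in the non-self-dual step, where you need \(h^\total((\lambda,\lambda^\star)B)\cong h^\total(B)\) compatibly with \(\cdot\zeta\). The paper obtains this by showing that multiplication by the single element \(\lambda\lambda^\star\) already induces the isomorphism \(h^\total(B)\to h^\total((\lambda,\lambda^\star)B)\) (\Cref{h-principal-ideal}, quoted from \cite{Me:KOFF}); the commuting square with \(\cdot\zeta\) is then immediate since all maps are multiplications in a commutative ring. You instead pass through the Koszul resolution, using that the swap module \(V\) has trivial Tate cohomology and that the sign on \(\Lambda^2V\) shifts the \(\Z/2\)-grading back. Your route is self-contained and conceptually pleasant---it makes clear why the pair behaves like a single self-dual generator---whereas the paper's route is more elementary (one explicit map, no auxiliary complex) but leans on an external reference. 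The bookkeeping you flag as the main obstacle does work out: since \(\zeta^\star=\zeta l^{-1}\), multiplication by \(\zeta\) intertwines \(\star\) with \(\star_l\) and simultaneously \(-\star\) with \(-\star_l\), so it is a morphism of each short exact sequence you write down and hence commutes with all boundary maps.
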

The proof is based on two lemmas that deal with the special case when \(\ideal a\) is generated, as a \star-ideal, by a single element. If the generator is a self-dual element \(\mu\) of \(A\), we write the ideal as \((\mu)\); if the generator is a non-self-dual element \(\lambda\), we write the ideal as \((\lambda,\lambda^*)\).
\begin{lem}\label{h-principal-ideal}
  Let \( (A,*) \) be a \star-ring, and let \(l\in A^\times\) be a unit such that \(l^*= l^{-1}\).
  \begin{itemize}
  \item If \( \mu \in A \) is a \star-self-dual element that is not a zero divisor, then multiplication by \( \mu \) induces a graded isomorphism
    \[
    h^\total(A,*_l) \xrightarrow[\cdot \mu]{\cong} h^\total((\mu),*_l).
    \]
  \item If \(\lambda, \lambda^*\) is a regular sequence in \( A \), then multiplication by the product \( \lambda\lambda^* \) induces a graded isomorphism
    \[
    h^\total(A,*_l) \xrightarrow[\cdot \lambda\lambda^*]{\cong} h^\total((\lambda,\lambda^*),*_l).
    \]
  \end{itemize}
\end{lem}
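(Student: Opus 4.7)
The first statement is nearly immediate once the equivariance is checked. Multiplication by $\mu$ gives an $A$-module isomorphism $A\to(\mu)$ (because $\mu$ is not a zero divisor), and using that $\mu$ is self-dual and that $A$ is commutative, one checks that $(a\mu)^{\star_l}=(a\mu)^*l = a^*\mu^* l = a^*l\,\mu = a^{\star_l}\mu$, so this is in fact an isomorphism of $(A,\star_l)$-modules. Any such isomorphism induces an isomorphism on Tate cohomology, which is the content of the first bullet.

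The second statement requires a bit more work. First I would check that $\phi\colon A\to(\lambda,\lambda^*)$, $a\mapsto a\lambda\lambda^*$, is $\star_l$-equivariant: indeed $\phi(a)^{\star_l} = a^*\lambda^*\lambda\,l = a^*l\,\lambda\lambda^* = \phi(a^{\star_l})$. Since $\lambda,\lambda^*$ is a regular sequence, $\lambda\lambda^*$ is not a zero divisor, so $\phi$ is injective. Hence we have a short exact sequence of $(A,\star_l)$-modules
\[
0\to A \xrightarrow{\;\phi\;} (\lambda,\lambda^*) \to Q \to 0,
\qquad Q:=(\lambda,\lambda^*)/(\lambda\lambda^*).
\]
The plan is to show that $h^\total(Q,\star_l)=0$; the long exact Tate cohomology sequence of \Cref{tate-exact-sequence} then yields $\phi^*\colon h^\total(A,\star_l)\xrightarrow{\cong} h^\total((\lambda,\lambda^*),\star_l)$, which is the desired claim since $\phi$ is multiplication by $\lambda\lambda^*$.

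To analyse $Q$, I would consider the map $A\oplus A\to Q$, $(a,b)\mapsto [a\lambda+b\lambda^*]$. Using that $\lambda,\lambda^*$ is a regular sequence one verifies that its kernel is exactly $(\lambda^*)\oplus(\lambda)$, so there is an $A$-module isomorphism
\[
A/(\lambda^*)\oplus A/(\lambda) \;\xrightarrow{\cong}\; Q, \qquad (\bar a,\bar b)\mapsto [a\lambda+b\lambda^*].
\]
The involution $\star_l$ on $Q$ transports, via this isomorphism, to the ``swap'' involution $(\bar a,\bar b)\mapsto(\overline{b^{\star_l}},\overline{a^{\star_l}})$, which is well defined because $\star_l$ sends $(\lambda)$ to $(\lambda^*)$ and vice versa (up to the unit $l$).

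Finally, the swap involution on a module of the form $M\oplus M'$ always has trivial Tate cohomology: any self-dual element $(\bar a,\bar b)$ satisfies $\bar a=\overline{b^{\star_l}}$, hence equals $(0,\bar b)+(0,\bar b)^\star$, showing $h^+=0$; the computation for $h^-$ is identical with a sign change. So $h^\total(Q,\star_l)=0$ and the proof concludes as indicated. The only real subtlety is the identification of the cokernel $Q$ together with its inherited involution; everything else is essentially formal manipulation with the long exact Tate cohomology sequence.
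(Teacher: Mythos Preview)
Your argument is correct. For the first bullet you simply observe that multiplication by a self-dual non-zero-divisor is an isomorphism of $\star_l$-modules; for the second you identify the cokernel $Q=(\lambda,\lambda^*)/(\lambda\lambda^*)$ with $A/(\lambda^*)\oplus A/(\lambda)$ carrying the swap involution, verify this has trivial Tate cohomology, and conclude via the long exact sequence. The kernel computation uses precisely the regularity of the sequence $\lambda,\lambda^*$ in the given order, and the equivariance checks are all sound.

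The paper does not actually spell out a proof here: it defers to \cite{Me:KOFF}*{Proposition~4.1}, remarking that the present \namecref{h-principal-ideal} is a mild generalization and ``can be proved in exactly the same way.'' Your direct argument via the short exact sequence $0\to A\xrightarrow{\cdot\lambda\lambda^*}(\lambda,\lambda^*)\to Q\to 0$ and the explicit identification of $Q$ as a swap module is a clean, self-contained route that makes the role of the regular-sequence hypothesis transparent. Without consulting the cited source one cannot say whether this is literally the same argument, but it is certainly in the same spirit and is the natural way to proceed with the tools available in the present paper (\Cref{tate-exact-sequence}).
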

\begin{proof}This is a mild generalization of one half of Proposition~4.1 of \cite{Me:KOFF}.  It can be proved in exactly the same way. (The other half of the proposition can also be generalized, but it is not used in the following.)
\end{proof}

\begin{lem}\label{h-induction-start}
  Let \( (A,\star) \) be a \star-ring, and let \(l\in A^\times\) be a unit such that \(l^*= l^{-1}\).
  Suppose there is an \(\star_l\)-self-dual element \(\zeta\in A\) such that multiplication with \(\zeta\) induces an isomorphism
  \[h^\total(A)\xrightarrow[\cdot\zeta]{\cong} h^\total(A,\star_l).\]
  Then multiplication with \(\zeta\) also induces isomorphisms
  \begin{align*}
    h^\total\left(\factor{A}{(\mu)}\right)               &\xrightarrow{\cong} h^\total\left(\factor{A}{(\mu)},\star_l\right) \\
    h^\total\left(\factor{A}{(\lambda,\lambda^*)}\right) &\xrightarrow[\cdot\zeta]{\cong} h^\total\left(\factor{A}{(\lambda,\lambda^*)}, \star_l\right)
  \end{align*}
  for any \star-self-dual non-zero divisor \(\mu\) of \(A\) and any regular sequence of the form \(\lambda,\lambda^*\) in \(A\).
\end{lem}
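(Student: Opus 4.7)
The plan is to derive both isomorphisms from the five-lemma applied to the ladder of long exact Tate cohomology sequences induced by the short exact sequence
\[
0 \to I \hookrightarrow A \twoheadrightarrow A/I \to 0,
\]
for $I = (\mu)$ or $I = (\lambda,\lambda^*)$. By \Cref{tate-exact-sequence} applied once with the default involution $\star$ and once with its twist $\star_l$, this yields two six-periodic long exact sequences, and multiplication by $\zeta$ will provide a morphism between them.

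The first step is to verify that multiplication by $\zeta$ is a well-defined morphism of these exact sequences. The key point is the identity $(\zeta m)^{\star_l} = \zeta m^*$, valid for every $m \in A$: from the hypothesis $\zeta^{\star_l} = \zeta$, equivalently $\zeta^* = \zeta l^{-1}$, one computes $(\zeta m)^{\star_l} = \zeta^* m^* l = \zeta l^{-1} m^* l = \zeta m^*$, using that $A$ is commutative. This shows multiplication by $\zeta$ sends $\ker(\id \mp \star)$ into $\ker(\id \mp \star_l)$ and $\im(\id \pm \star)$ into $\im(\id \pm \star_l)$, and hence descends to Tate cohomology. Commutativity with the inclusion and quotient morphisms is automatic since $\zeta$ is central. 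Commutativity with the boundary map follows from the same identity: for any lift $m \in A$ of a class $[p(m)]$,
\[
\partial_l\bigl[p(\zeta m)\bigr] = \bigl[\zeta m \mp (\zeta m)^{\star_l}\bigr] = \bigl[\zeta m \mp \zeta m^*\bigr] = \zeta \cdot \partial[p(m)].
\]

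The second step is to show that multiplication by $\zeta$ is an isomorphism on $h^\total(I,-)$. By \Cref{h-principal-ideal}, multiplication by $\mu$ (respectively $\lambda\lambda^*$) induces graded isomorphisms $h^\total(A) \xrightarrow{\cong} h^\total(I)$ and $h^\total(A,\star_l) \xrightarrow{\cong} h^\total(I,\star_l)$. Since $A$ is commutative, multiplication by $\zeta$ commutes with multiplication by $\mu$ (resp.\ $\lambda\lambda^*$), so the square
\[
\xymatrix{
h^\total(A) \ar[r]^-{\cdot \zeta}\ar[d]_{\cdot \mu}^\cong & h^\total(A,\star_l) \ar[d]_{\cdot \mu}^\cong \\
h^\total(I) \ar[r]^-{\cdot \zeta} & h^\total(I,\star_l)
}
\]
commutes. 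Its top row is an isomorphism by the hypothesis of the lemma, hence so is its bottom row.

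The third step is the direct application of the five-lemma to the commutative ladder between the two long exact sequences: isomorphisms on the terms coming from $A$ and from $I$ force an isomorphism on the quotient $A/I$, which is the desired conclusion. The only non-routine verification in this plan is the naturality of the boundary map, which reduces entirely to the identity $(\zeta m)^{\star_l} = \zeta m^*$; every other step is a short diagram chase.
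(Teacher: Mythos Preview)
Your proof is correct and follows essentially the same approach as the paper: both use \Cref{h-principal-ideal} to show that multiplication by $\zeta$ is an isomorphism on $h^\total$ of the ideal, then apply the Five Lemma to the ladder of long exact Tate cohomology sequences. Your verification of compatibility with the boundary map via the identity $(\zeta m)^{\star_l} = \zeta m^*$ is more explicit than the paper's, which simply notes that multiplication by $\zeta$ is a $\star$-morphism $(M,\star)\to(M,\star_l)$ and leaves the rest implicit.
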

\begin{proof}
  As \(\zeta\) is \(\star_l\)-self-dual, multiplication by \(\zeta\) induces a \(\star\)-morphism \((M,\star)\to(M,\star_l)\) for any \((A,\star)\)-module \((M,\star)\).  Given a regular sequence \(\lambda,\lambda^*\) as in the \namecref{h-induction-start},
  consider the following commutative square of \((A,\star)\)-modules and the induced commutative square of \(h^\total(A)\)-modules:
  \[\begin{aligned}
    \xymatrix{
      {(A,\star)} \ar[r]^{\cdot\lambda\lambda^*}\ar[d]_{\cdot\zeta} & {((\lambda,\lambda^*),\star)} \ar[d]^{\cdot\zeta} \\
      {(A,\star_l)} \ar[r]_{\cdot\lambda\lambda^*} & {((\lambda,\lambda^*),\star_l)}
    }
  \end{aligned}
  \quad\leadsto\quad
  \begin{aligned}
    \xymatrix{
      {h^\total(A)} \ar[r]^{\cdot\lambda\lambda^*}_{\cong}\ar[d]_{\cdot\zeta}^{\cong} & {h^\total((\lambda,\lambda^*))} \ar[d]^{\cdot\zeta} \\
      {h^\total(A,\star_l)} \ar[r]_{\cdot\lambda\lambda^*}^{\cong} & {h^\total((\lambda,\lambda^*),\star_l)}
    }
  \end{aligned}
  \]
  In the right-hand square, the horizontal morphisms are isomorphisms by \Cref{h-principal-ideal} and the vertical morphism on the left is an isomorphism by assumption.  So \(\zeta\colon h^\total((\lambda,\lambda^*))\to h^\total((\lambda,\lambda^*),\star_l)\) is also an isomorphism.
  To obtain the analogous claim for the quotient \(A/(\lambda,\lambda^*)\), we consider the long exact Tate cohomology sequences associated with the obvious two short exact sequences of \((A,\star)\)-modules:
  \begin{align*}
    0\to ((\lambda,\lambda^*),\star)    \to &(A,\star)   \to (\factor{A}{(\lambda,\lambda^*)}, \star)   \to 0 \\
    0\to ((\lambda,\lambda^*),\star_l) \to &(A,\star_l) \to (\factor{A}{(\lambda,\lambda^*)}, \star_l) \to 0
  \end{align*}
  Multiplication by \(\zeta\) induces an isomorphisms on two out of three terms of these long exact sequences, so by the Five Lemma it also induces an isomorphism on the remaining term.
  The proof for the case of a \star-self-dual non-zero divisor \(\mu\) works in the same way.
\end{proof}

\begin{proof}[Proof of \Cref{tate-of-quotient}]
\Cref{tate-of-quotient} now follows by induction over the number of generators of \(\ideal a\).  Indeed, suppose the claim holds for the ideal \(\ideal a'\subset \ideal a\) generated by all but one of the given generators of \(\ideal a\), so that \(\zeta\) induces an isomorphism \(h^\total(A/\ideal a')\cong h^\total(A/\ideal a', \star_l)\). The assumptions ensure that the remaining generator of \(\ideal a\) will either be a \star-self-dual element \(\mu\) which is not a zero-divisor in \(A/\ideal a'\) or an element \(\lambda\) such that \(\lambda,\lambda^*\) is a regular sequence in \(A/\ideal a'\).  Thus, we can conclude using \Cref{h-induction-start}.
\end{proof}

\encouragepagebreak
\section{Translation}
\subsection{The proofs}
The considerations above easily imply the following two precursors of \Cref{mainthm:iso}:

\begin{prop}\label{dichotomy-for-P}
  Let \(P\) be a parabolic subgroup of a semisimple simply connected algebraic group over a field,
  and let \(\Rep(P)\) be its representation ring equipped with the usual involution induced by duality.
  Let \(\omega\in\Rep(P)^\times\) be a character of \(P\).
  If the twisted total Tate cohomology group \(h^\total(\Rep(P),\omega)\) is non-zero, then there exists an element \(\zeta_{\omega}\) in \(h^+(\Rep(P),\omega)\) such that multiplication by \(\zeta_{\omega}\) induces an isomorphism of \(\Z/2\)-graded modules
  \[
  h^\total\left(\Rep(P)\right) \xrightarrow{\cong} h^\total\left(\Rep(P),\omega\right).
  \]
  Moreover, if both \(h^\total(\Rep(P),\omega_1)\) and \(h^\total(\Rep(P),\omega_2)\) are non-zero for characters \(\omega_1\) and \(\omega_2\) of \(P\), then so is \(h^\total(\Rep(P),\omega_1\otimes \omega_2)\) and we may choose \(\zeta_{\omega_1\otimes \omega_2} = \zeta_{\omega_1}\cdot \zeta_{\omega_2}\).
\end{prop}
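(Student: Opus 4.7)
The plan is to reduce the statement directly to Proposition~\ref{van:prop:twist} by exhibiting \((\Rep(P),\star)\) as a twisted polynomial ring in the sense of \Cref{sec:tate-of-twisted}.

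First I would invoke Corollary~\ref{dual-on-repring}, which already presents \(\Rep(P_\thetaR)\) in precisely the required shape: take as Laurent generators the \(x_\beta\) with \(\beta\in\nthetaR\), as self-dual polynomial generators the elements \(\mu_\teta := w_\teta\) for those \(\teta\in\thetaR\) fixed by \(\ddual\), and as non-self-dual pairs \((\gamma_\teta,\gamma_\teta') := (w_\teta, w_{\teta^\ddual})\) for the remaining \(\teta\).  The individual twists \(\vec m^\teta\) (and correspondingly \(\vec c^\teta\)) are read off from the formulas \(w_\teta^\star = w_{\teta^\ddual}\cdot x^{\vec m^\teta}\), \(x_\beta^\star = x_\beta^{-1}\).

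Next I would verify that every character \(\omega\in\Rep(P)^\times\) is of the form \(x^{\vec t}\) for some \(\vec t\in\Z^{\abs{\nthetaR}}\), so that the \(\omega\)-twisted involution on \(\Rep(P)\) coincides with the \(x^{\vec t}\)-twisted involution appearing in Proposition~\ref{van:prop:twist}.  This is the one small computation required: a character of \(P\) factors through the abelianization of its Levi, hence by \Cref{sec:rep-of-P} corresponds to a \(\Weyl[\thetaR]\)-fixed weight, i.e., an element of \(X^*(L_\thetaR) = (\thetaR^\vee)^\perp\); under the isomorphism of Corollary~\ref{repring-of-P} this lattice is precisely the free abelian group on the Laurent generators \(x_\beta\).

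With these identifications in place, the existence of \(\zeta_\omega \in h^+(\Rep(P),\omega)\) and the induced isomorphism of \(\Z/2\)-graded \(h^\total(\Rep(P))\)-modules follow immediately from the first half of Proposition~\ref{van:prop:twist}, while the multiplicative statement \(\zeta_{\omega_1\otimes\omega_2} = \zeta_{\omega_1}\cdot\zeta_{\omega_2}\) follows from its counterpart in the same proposition together with \(x^{\vec t_1}\cdot x^{\vec t_2} = x^{\vec t_1+\vec t_2}\).  There is no genuine obstacle here: all the substance of the proposition was already secured in \Cref{sec:tate-of-twisted}, and what remains is only the bookkeeping sketched above.
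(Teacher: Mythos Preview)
Your proposal is correct and follows essentially the same route as the paper: invoke \Cref{repring-of-P} and \Cref{dual-on-repring} to identify \(\Rep(P)\) with a twisted polynomial ring, observe that characters of \(P\) correspond to monomials \(x^{\vec t}\), and then apply \Cref{van:prop:twist} directly. Your justification that every character is of the form \(x^{\vec t}\) is slightly more detailed than the paper's, which simply states the correspondence; otherwise the arguments are the same.
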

\begin{proof}
  This is a translation of \Cref{van:prop:description,van:prop:twist}.
  Indeed, by \Cref{repring-of-P,dual-on-repring} we haven an isomorphism
  \begin{equation}
  \Rep(P)\cong\Z[w_\teta,x_\beta^{\pm 1} \;\vert\; \teta\in\thetaR, \beta\in\nthetaR]
  \end{equation}
  that identifies \(\Rep(P)\) with a twisted polynomial ring in the sense of \Cref{sec:tate-of-twisted}.
  Under this isomorphism, the character \(\omega_\beta\) of \(P\) (\(\beta\in\nthetaR\)) corresponds to the invertible element \(x_\beta\),
  so a general character \(\omega^{\vec t} =\otimes_{\beta\in\nthetaR}\omega^{t_\beta}_\beta\) corresponds to a product \(x^{\vec t} = \prod_{\beta\in\nthetaR} x_\beta^{t_\beta}\).
\end{proof}

\begin{thm}\label{dichotomy-for-tate}
  Let \(G\) be a split semisimple simply connected algebraic group over a field,
  let \(P\subset G\) be a parabolic subgroup,
  and let \(\lb L\) be a line bundle over \(G/P\).
  If the twisted total Tate cohomology group \(h^\total(G/P,\lb L)\) is non-zero, then there exists an element \(\zeta_{\lb L}\) in \(h^+(G/P,\lb L)\) such that multiplication by \(\zeta_{\lb L}\) induces an isomorphism of \(\Z/2\)-graded modules
  \[
  h^\total\left(\factor{G}{P}\right) \xrightarrow{\cong} h^\total\left(\factor{G}{P},\lb L\right).
  \]
  Moreover, if both \(h^\total(G/P,\lb L_1)\) and \(h^\total(G/P,\lb L_2)\) are non-zero for line bundles \(\lb L_1\) and \(\lb L_2\), then so is \(h^\total(G/P,\lb L_1\otimes \lb L_2)\) and we may choose \(\zeta_{\lb L_1\otimes \lb L_2} = \zeta_{\lb L_1}\cdot \zeta_{\lb L_2}\).
\end{thm}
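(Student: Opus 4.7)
The plan is to reduce the theorem to its analogue for $\Rep(P)$, namely \Cref{dichotomy-for-P}, via Panin's Theorem and \Cref{tate-of-quotient}. Panin's Theorem identifies $\K_0(G/P) \cong \Rep(P)/\ideal a$ as \star-rings, where $\ideal a$ is the ideal generated by rank-zero classes restricted from $\Rep(G)$; the accompanying isomorphism \eqref{eq:Pic=X} identifies the line bundle $\lb L$ on $G/P$ with a character $\omega_{\lb L} \in \Rep(P)^\times$ satisfying $\omega_{\lb L}^\star = \omega_{\lb L}^{-1}$. Thus $h^\total(G/P, \lb L) = h^\total(\Rep(P)/\ideal a, \omega_{\lb L})$, and my task is to lift the non-vanishing hypothesis back to $\Rep(P)$, apply \Cref{dichotomy-for-P} there, and push the resulting $\zeta$ back down to the quotient via \Cref{tate-of-quotient}.

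To invoke \Cref{tate-of-quotient}, I need involution-respecting generators of $\ideal a$ forming (with duals) a regular sequence in $\Rep(P)$. Since $G$ is simply connected semisimple, $\Rep(G) \cong \Z[\rho_\sigma \mid \sigma \in \simpleR]$ is polynomial on the fundamental representations (\Cref{reps-of-reductive}), and the global Dynkin involution $\sigma \mapsto \sigma^\ddual$ permutes these generators. I set $\mu_\sigma := \rho_\sigma - \rank\rho_\sigma$ for self-dual $\sigma$, and $\lambda_\sigma := \rho_\sigma - \rank\rho_\sigma$ with $\lambda_\sigma^\star = \lambda_{\sigma^\ddual}$ for non-self-dual pairs. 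Listed together with duals, these are just shifted polynomial variables in $\Rep(G)$, so form a regular sequence there; and by Steinberg's Theorem, $\Rep(P)$ is finite and free over $\Rep(G)$, so this regularity is preserved in $\Rep(P)$.

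The remaining step is the vanishing lift. If $h^\total(\Rep(P), \omega_{\lb L}) \neq 0$, then \Cref{dichotomy-for-P} supplies a $\star_{\omega_{\lb L}}$-self-dual element $\zeta_{\omega_{\lb L}} \in \Rep(P)$ whose class in $h^+$ induces an isomorphism $h^\total(\Rep(P)) \xrightarrow{\cong} h^\total(\Rep(P), \omega_{\lb L})$, and \Cref{tate-of-quotient} descends this isomorphism to $\Rep(P)/\ideal a = \K_0(G/P)$. To rule out the opposite case, writing $R := \Rep(P)$ for brevity, I would induct through the chosen generators of $\ideal a$: for each self-dual generator $\mu$ (or dual pair $\lambda, \lambda^\star$) passing from $R/\ideal a'$ to $R/(\ideal a' + (\mu))$, the short exact sequence $0 \to \mu \cdot R/\ideal a' \to R/\ideal a' \to R/(\ideal a'+(\mu)) \to 0$ combined with the $\star_{\omega_{\lb L}}$-equivariant isomorphism $\mu \cdot R/\ideal a' \cong R/\ideal a'$ from \Cref{h-principal-ideal} forces vanishing to propagate through the long exact Tate cohomology sequence; inducting, $h^\total(R, \omega_{\lb L}) = 0$ would force $h^\total(R/\ideal a, \omega_{\lb L}) = 0$, contradicting the hypothesis. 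Hence the first case always applies. The multiplicativity statement $\zeta_{\lb L_1 \otimes \lb L_2} = \zeta_{\lb L_1} \cdot \zeta_{\lb L_2}$ descends directly from \Cref{dichotomy-for-P} via the ring surjection $R \twoheadrightarrow R/\ideal a$. The vanishing-propagation step is the main obstacle I anticipate; the regularity verification via Steinberg's Theorem is routine in comparison.
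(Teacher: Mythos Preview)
Your proposal follows the paper's proof exactly: apply \Cref{tate-of-quotient} to $A=\Rep(P)$ and the ideal $\ideal a$ from Panin's Theorem, with the hypothesis on $A$ supplied by \Cref{dichotomy-for-P} and regularity of the generators of $\ideal a$ verified via \Cref{thm:Steinberg}. Your vanishing-propagation step (that $h^\total(\Rep(P),\omega_{\lb L})=0$ forces $h^\total(\Rep(P)/\ideal a,\omega_{\lb L})=0$, by inducting through the generators using \Cref{h-principal-ideal} and the long exact Tate sequence) makes explicit a lift that the paper's proof leaves tacit when it writes ``the required assumptions on $A$ are satisfied by the previous \namecref{dichotomy-for-P}'', and your argument for it is correct.
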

\begin{proof}
  We apply \Cref{tate-of-quotient} to \(A := \Rep(P)\) and the ideal \(\ideal a\) of \Cref{thm:Hodgkin}.
  The required assumptions on \(A\) are satisfied by the previous \namecref{dichotomy-for-P}.
  To check that \(\ideal a\) also satisfies the required assumptions, we recall that the representation ring of \(G\)
  can be written as a polynomial ring on generators \(\lambda_1,\lambda_1^*,\cdots, \lambda_k,\lambda_k^*, \mu_1,\dots, \mu_l\)
  with \(\mu_1,\dots,\mu_l\) self-dual (\eg take \(\thetaR = \simpleR\) in \Cref{dual-on-repring}).
  The ideal \(\ideal a\) is generated by the corresponding rank zero classes \(\lambda_i-\rank(\lambda_i),\dots\) in \(\Rep(P)\).
  These classes form a regular sequence in \(\Rep(G)\).
  By \Cref{thm:Steinberg}, \(\Rep(P)\) is free over \(\Rep(G)\), so their images in \(\Rep(P)\) likewise form a regular sequence.
\end{proof}

\Cref{mainthm:iso} is now an immediate corollary of \Cref{thm:W=h}.
Similarly, \Cref{mainthm:vanishing} is a consequence of:
\begin{thm}\label{vanishing-for-tate}
  Let \(G/P\) and \(\lb L\in\Pic(G/P)\) be as above.
  Let \(\vec m^\teta\) be the twist vectors describing the duality on \(\Rep(P)\) as in \Cref{dual-on-repring}.  View the twist vectors as elements of \(\Pic(G/P)\) via the identification of the Picard group with the character group \(X^*(P)\) as in \eqref{eq:Pic=X}.
  Then \(h^\total(G/P,\lb L)\) is non-zero if and only if the reduction of \(\lb L\) modulo two is a linear combination of the reductions \(\bar{\vec m}^\teta\) of those twist vectors with \(\teta=\teta^\ddual\).
\end{thm}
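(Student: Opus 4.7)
The plan is to translate \Cref{van:prop:vanishing} for the twisted polynomial ring $\Rep(P)$ to the quotient $\K_0(G/P) \cong \Rep(P)/\ideal a$ of \Cref{thm:Hodgkin}, using the quotient machinery of \Cref{h-principal-ideal,tate-of-quotient} in both directions. Under the isomorphism of \Cref{repring-of-P} and the duality formulas of \Cref{dual-on-repring}, $\Rep(P)$ is a twisted polynomial ring whose self-dual polynomial generators $\mu_j$ are precisely the $w_\teta$ with $\teta = \teta^\ddual$, carrying twists $\vec m^\teta$; the invertible generators $x_\beta$ correspond under \eqref{eq:Pic=X} to a basis of $\Pic(G/P)$, so the line bundle $\lb L$ is identified with a monomial $x^{\vec t}$. \Cref{van:prop:vanishing} therefore already gives the desired criterion at the level of $\Rep(P)$: $h^\total(\Rep(P), x^{\vec t}) \neq 0$ iff $\bar{\vec t}$ lies in the span of the reductions $\bar{\vec m}^\teta$ with $\teta = \teta^\ddual$.

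For the \emph{if} direction, \Cref{van:prop:twist} (repackaged as \Cref{dichotomy-for-P}) produces a $\star_{x^{\vec t}}$-self-dual element $\zeta \in \Rep(P)$ whose class induces an isomorphism $h^\total(\Rep(P)) \xrightarrow{\cong} h^\total(\Rep(P), x^{\vec t})$. The ideal $\ideal a$ satisfies the hypotheses of \Cref{tate-of-quotient}, as observed in the proof of \Cref{dichotomy-for-tate} using \Cref{thm:Steinberg}, so multiplication by the image of $\zeta$ induces an isomorphism $h^\total(G/P) \cong h^\total(G/P, \lb L)$. The class of $1$ in $h^+(G/P)$ is non-zero (as is checked by considering ranks modulo two, since $\K_0(G/P)$ surjects onto $\Z$), and so $h^\total(G/P, \lb L) \neq 0$.

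For the \emph{only if} direction, suppose that $\bar{\vec t}$ is not in the span, so $h^\total(\Rep(P), x^{\vec t}) = 0$. I propagate this vanishing through the quotient by induction along the regular sequence generating $\ideal a$, parallel to the induction in the proof of \Cref{tate-of-quotient}. At each step one has a $\star$-ring $B$ with $h^\total(B, x^{\vec t}) = 0$, and the next element of the sequence is either a $\star$-self-dual non-zero-divisor $\mu$ or an element $\lambda$ with $\lambda, \lambda^*$ regular. \Cref{h-principal-ideal}, applicable because $(x^{\vec t})^\star = (x^{\vec t})^{-1}$, yields an isomorphism between $h^\total(B, x^{\vec t})$ and the Tate cohomology of the ideal, so the latter vanishes too; the long exact Tate cohomology sequence from \Cref{tate-exact-sequence} then squeezes $h^\total(B/(\mu), x^{\vec t})$, respectively $h^\total(B/(\lambda, \lambda^*), x^{\vec t})$, between two zero groups, forcing it to vanish. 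Exhausting the generators of $\ideal a$ gives $h^\total(G/P, \lb L) = 0$. The main point is that the same machinery that \Cref{tate-of-quotient} uses to transfer isomorphisms from $A$ to $A/\ideal a$ equally well transfers vanishing, via the standard sandwich argument in the long exact Tate cohomology sequence; no further structural input beyond what is already in place is needed.
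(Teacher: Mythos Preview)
Your proof is correct and follows the same strategy the paper intends: translate \Cref{van:prop:vanishing} from \(\Rep(P)\) to the quotient \(\K_0(G/P)\cong\Rep(P)/\ideal a\) via the machinery of \Cref{tate-of-quotient,h-principal-ideal}. The paper's own proof is a one-line appeal to \Cref{van:prop:vanishing}, leaving the passage to the quotient implicit; your explicit treatment of the ``only if'' direction---propagating vanishing down the regular-sequence filtration using \Cref{h-principal-ideal} and the long exact sequence of \Cref{tate-exact-sequence}---spells out precisely the step the paper elides.
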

\begin{proof}
  Again, this is a direct translation of \Cref{van:prop:vanishing}.
\end{proof}

\subsection{The twists}
In this section we describe the twists \(\vec m^\teta\) of \Cref{dual-on-repring} explicitly.
The result in \Cref{dyn:cor:omega} below will refer to a list of values that we have organized in \Cref{fig:twists} in terms of Dynkin diagrams.
Let us recall our graphical conventions and explain our notation.
\begin{itemize}
\item  Associated with our semisimple group \(G\), we have an  `ambient root system' \(\RSystem\)
  with root space \(X^*_\Q\) and simple roots \(\simpleR\).
  The corresponding `ambient Dynkin diagram' has a node for each simple root \(\sigma\in\simpleR\).
  We draw this Dynkin diagram using blank circles, \eg:
  \begin{center}
    \ifdraft{}{
      \begin{tikzpicture}[dynPicture]
        {[dynDiagram]
          \dynR{}
          \dynR{}
          \dynR{}
          \dynR{}
          \dynR{}
          \dynR{}
          \dynR{}
          \dynR[by right arrow]{}
         }
      \end{tikzpicture}
    }
  \end{center}
  We write \(C_{\sigma\nu} = \pairing{\sigma}{\nu^\vee}\) for the coefficients of the Cartan matrix of \(\RSystem\).
  These can be read off directly from the Dynkin diagram using \Cref{fig:Cartan}
  (ignoring the colours for the moment).

\item  The root system \(\RSystem[\thetaR]\) associated with \(L_\thetaR\) or \(\bar L_\thetaR\) has root space \(\Q\thetaR\subset X^*_\Q\) and simple roots \(\thetaR\).
  The corresponding Dynkin diagram has a node for each root \(\teta\in\thetaR\).
  We will indicate this Dynkin diagram by filling the corresponding nodes of the ambient diagram, \eg:
  \begin{center}
    \ifdraft{}{
      \begin{tikzpicture}[dynPicture]
        {[dynDiagram]
             \dynR{}
          {[dynTheta]
            \dynR{}
          }
          \dynR{}
          \dynR{}
          {[dynTheta]
            \dynR{}
            \dynR{}
            \dynR{}
          }
          \dynR[by right arrow]{}
        }
      \end{tikzpicture}
    }
  \end{center}
  The coefficients of the Cartan matrix of \(\RSystem[\thetaR]\) and its inverse matrix will be denoted by \(\bar C_{\teta\nu}\)
  and \(\bar C^{\teta\nu}\), respectively.\footnote{%
    For \(\teta,\nu\in\thetaR\), we have of course \(\bar C_{\teta\nu} = C_{\teta\nu}\), but \(\bar C^{\teta\nu}\) may be different from \(C^{\teta\nu}\).}
  For irreducible \(\RSystem[\thetaR]\), the values \(\bar C^{\teta\nu}\) are conveniently summarized in  \cite{Bourbaki:Lie456}*{Plates~I--IX, entries~(VI)}.
  \Cref{fig:aij} displays the values that will be relevant for us.
  In general, the Cartan matrix and its inverse are block matrices
  with the Cartan matrices of each irreducible component on the diagonal and all off-diagonal blocks zero,
  so the coefficients can be computed one component at a time.
\end{itemize}

\begin{figure}
\begin{multicols}{2}
  \newcommand{\lablebox}[1]{%
  \node[anchor=north west,align=left] at (1,-0.5)
  {\parbox{3cm}{\raggedright\begin{flalign*}#1\end{flalign*}}};
}%
\begin{tikzpicture}[dynPicture]
  \dynTypeL{$A_l$}
  {[dynDiagram,dynTheta]
    \dynR{[lbl=$\teta_1$][right=of type]}
    \dynR{[lbl=$\teta_2$] (a)}
    \dynR{[lbl=$\teta_3$]}
    \dynR[by fadeout line]{[empty]} 
    \dynR[by fadein line]{[lbl=$\teta_{l-1}$] (z)}
    \dynR{[lbl=$\teta_l$]}
  }
  \draw[dynduality] (a) to [bend left=130] (z);
  \lablebox{
    \bar C^{i,j}&=\begin{cases}
      \tfrac{i(l+1-j)}{l+1} & \text{ for } i\leq j \\
      \tfrac{j(l+1-i)}{l+1} & \text{ for } i\geq j 
    \end{cases}
    &
  }
\end{tikzpicture}

\begin{tikzpicture}[dynPicture]
  \dynTypeL{$B_l$}
  {[dynDiagram,dynTheta]
    \dynR{[lbl=$\teta_1$][right=of type]}
    \dynR{[lbl=$\teta_2$]}
    \dynR{[lbl=$\teta_3$]}
    \dynR[by fadeout line]{[empty]} 
    \dynR[by fadein line]{[lbl=$\teta_{l-1}$]}
    \dynR[by right arrow]{[lbl=$\teta_l$]} 
  }
  \lablebox{
    \bar C^{i,1}&=  \begin{cases}
      1            & \text{ for } i<l\\
      \tfrac{1}{2} & \text{ for } i=l
    \end{cases}&
    \\
    \bar C^{i,l}&= \begin{cases}
      i            & \text{ for } i<l\\           
      \tfrac{l}{2} & \text{ for } i=l
    \end{cases}&
  }
\end{tikzpicture} 

\begin{tikzpicture}[dynPicture]
  \dynTypeL{$C_l$}
  {[dynDiagram,dynTheta]
    \dynR{[lbl=$\teta_1$][right=of type]}
    \dynR{[lbl=$\teta_2$]}
    \dynR{[lbl=$\teta_3$]}
    \dynR[by fadeout line]{[empty]} 
    \dynR[by fadein line]{[lbl=$\teta_{l-1}$]}
    \dynR[by left arrow]{[lbl=$\teta_l$]} 
  }
  \lablebox{
    \bar C^{i,1}&=1 &
    \bar C^{i,l}&=\tfrac{i}{2}&
  }
\end{tikzpicture}
\captionsetup{margin=0pt}
\caption{Selected coefficients \mbox{\ensuremath{\bar C^{i,j}:=\bar C^{\teta_i\teta_j}}} of the inverse of the Cartan matrix, as relevant for our computations. The arrows indicate the involution \(\teta\mapsto\teta^\ddual\) on \(\thetaR\).}

\begin{tikzpicture}[dynPicture]
  \dynTypeL{$D_l$}
  {[dynDiagram, dynTheta]
    \dynR{[lbl=$\teta_1$][right=of type]}
    \dynR{[lbl=$\teta_2$]}
    \dynR{[lbl=$\teta_3$]}
    \dynR[by fadeout line]{[empty]} 
    \dynR[by fadein line]{[lbl=$\teta_{l-2}\;\;$](v)}
    \dynR{[label={right:$\teta_l$}][above right=of v] (vv)}
    \dynR[with v]{[label={right:$\teta_{l-1}$}][below right=of v] (w)}
  }
  \draw[dynduality,dashed] (vv) to [bend left=30]  node[auto] {${\;}_{(\text{if } l \text{ odd})}$} (w);
  \lablebox{
    \bar C^{i,1}&= \begin{cases}
      1             & \text{ for } i<l-1  \\
      \tfrac{1}{2}  & \text{ for } i=l,l-1           
    \end{cases}&
    \\
    \bar C^{i,l}&= \begin{cases}
      \tfrac{i}{2}   & \text{ for } i<l-1 \\           
      \tfrac{l-2}{4} & \text{ for } i=l-1\\
      \tfrac{l}{4}   & \text{ for } i=l 
    \end{cases}&
  }
\end{tikzpicture}

\begin{tikzpicture}[dynPicture]
  \dynTypeL{$E_6$}
  {[dynDiagram,dynTheta]
    \dynR{[lbl=$\teta_1$][right=of type]}
    \dynR{[lbl=$\teta_3$] (a) }
    \dynR{[lbl=$\teta_4$](e6)}
    \dynR[with e6 by line]{[label={right:$\teta_2$}][above=of e6]}
    \dynR[with e6 by line]{[lbl=$\teta_5$][right=of e6] (z) }
    \dynR{[lbl=$\teta_6$]}
  }
  \draw[dynduality] (a) to [bend left=120] (z);
  \lablebox{
    \bar C^{1,6}&=\tfrac{2}{3} \quad& \bar C^{3,6}&=\tfrac{4}{3} \quad& \bar C^{5,6}&=\tfrac{5}{3}\\
    \bar C^{2,6}&=1                 & \bar C^{4,6}&=2                 & \bar C^{6,6}&=\tfrac{4}{3}
  }
\end{tikzpicture}

\begin{tikzpicture}[dynPicture]
  \dynTypeL{$E_7$}
  {[dynDiagram,dynTheta]
    \dynR{[lbl=$\teta_1$][right=of type]}
    \dynR{[lbl=$\teta_3$]}
    \dynR{[lbl=$\teta_4$](e6)}
    \dynR[with e6 by line]{[label={right:$\teta_2$}][above=of e6]}
    \dynR[with e6 by line]{[lbl=$\teta_5$][right=of e6]}
    \dynR{[lbl=$\teta_6$]}
    \dynR{[lbl=$\teta_7$]}
  }
  \lablebox{
    \bar C^{1,7}&=1       \quad& \bar C^{4,7}&= 3       \quad& \bar C^{7,7}&=\tfrac{3}{2} \\ 
    \bar C^{2,7}&=\tfrac{3}{2} & \bar C^{5,7}&=\tfrac{5}{2}                               \\ 
    \bar C^{3,7}&=2            & \bar C^{6,7}&=2  
  }
\end{tikzpicture}

  \label{fig:aij}
\end{multicols}
\end{figure}

The Cartan matrices are the transposes of the transformation matrices describing the change of bases from the simple roots to the respective fundamental weights:
\begin{align*}
  \sigma &= \textstyle\sum_\nu C_{\sigma\nu}\omega_\nu    \quad(\text{for } \sigma,\nu \in\simpleR)\\
  \teta &= \textstyle \sum_\nu \bar C_{\teta\nu}\bar\omega_\nu \quad(\text{for } \teta,\nu \in\thetaR)
\end{align*}
We can therefore express the \((\thetaR^\vee)^\perp_\Q\)-component of a fundamental weight \(\omega_\teta\)
with \(\teta\in\thetaR\) in terms of these matrices.

\newcommand*{\neighbour}[1]{{\teta_{#1}}}
\begin{lem}\label{dyn:lem:omega}
  Let \(\omega_\teta\) with \(\teta\in\thetaR\) be a fundamental weight of \(G\),
  and let \(\thetaR_\teta\) denote the  connected component of \(\thetaR\) containing \(\teta\).\footnote{%
    This is intentional abuse of language.
    To be more precise, we would have to write:
    ``Let \(\thetaR_\teta\) denote the subset of \(\thetaR\)
    that corresponds to the connected component of the Dynkin diagram of \(\RSystem[\thetaR]\)
    that contains the node corresponding to \(\teta\).''
  }
  Write the projection of \(\omega_\teta\) onto \((\thetaR^\vee)^\perp\) in the basis \(\left\{\omega_\beta\mid\beta\in\nthetaR\right\}\):
  \[
  \prii(\omega_\teta) = \textstyle\sum_{\beta\in\nthetaR} c_{\teta\beta}\omega_\beta
  \]
  Then the coefficients \(c_{\teta\beta}\) are given by
  \[
  c_{\teta\beta} = \begin{cases}
    -\bar C^{\teta\teta_\beta} C_{\neighbour{\beta}\beta} & \text{if \(\beta\) has a neighbour \(\teta_\beta\) in \(\thetaR_\teta\)}\\
    0 & \text{otherwise}
  \end{cases}
  \]
  Here and in the following, we say that \(\teta_\beta\) is a neighbour of \(\beta\) if these roots are connected by an edge of the ambient Dynkin diagram.
\end{lem}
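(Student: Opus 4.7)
The identity can be established by pairing $\prii(\omega_\teta)$ with coroots of simple roots in $\nthetaR$. Since $\{\omega_\beta \mid \beta\in\nthetaR\}$ is dual to $\{\beta^\vee\mid\beta\in\nthetaR\}$ and spans $(\thetaR^\vee)^\perp_\Q$ (a rank-$|\nthetaR|$ subspace containing each such $\omega_\beta$), one has
\[
  c_{\teta\beta} = \pairing{\prii(\omega_\teta)}{\beta^\vee}.
\]
The plan is to exploit the decomposition $\omega_\teta = \prii(\omega_\teta) + \bar\omega_\teta$ from \eqref{eq:fundamental-components} to replace $\prii(\omega_\teta)$ by $-\bar\omega_\teta$ modulo terms that vanish on $\beta^\vee$, and then to expand $\bar\omega_\teta$ in the basis of simple roots of $\RSystem[\thetaR]$ using the inverse Cartan matrix $\bar C^{\teta\nu}$.

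First I would note that $\pairing{\omega_\teta}{\beta^\vee}=\delta_{\teta\beta}=0$ (since $\teta\in\thetaR$ and $\beta\in\nthetaR$ are distinct), so
\[
  c_{\teta\beta} = -\pairing{\bar\omega_\teta}{\beta^\vee}.
\]
Next, since $\bar\omega_\teta$ is a fundamental weight of the semisimple group $\bar L_\thetaR$ and the Cartan matrix $\bar C$ of $\RSystem[\thetaR]$ satisfies $\teta = \sum_{\nu\in\thetaR}\bar C_{\teta\nu}\bar\omega_\nu$, inversion gives the expansion
\[
  \bar\omega_\teta = \sum_{\nu\in\thetaR}\bar C^{\teta\nu}\,\nu.
\]
Substituting yields $c_{\teta\beta} = -\sum_{\nu\in\thetaR}\bar C^{\teta\nu}\,C_{\nu\beta}$.

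To reach the stated formula it remains to truncate this sum. Two observations suffice. Since $\bar C$ (and hence $\bar C^{-1}$) is block-diagonal with blocks indexed by the connected components of the Dynkin diagram of $\thetaR$, only $\nu\in\thetaR_\teta$ contributes. Moreover, $C_{\nu\beta}=\pairing{\nu}{\beta^\vee}$ is non-zero precisely when $\nu$ and $\beta$ are joined by an edge of the ambient Dynkin diagram. Because each connected component of the Dynkin diagram of $\simpleR$ is a tree, a vertex $\beta\in\nthetaR$ cannot be adjacent to two distinct vertices of $\thetaR_\teta$ (otherwise a cycle would arise from the unique path in $\thetaR_\teta$ joining these vertices together with the two edges through $\beta$). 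Hence at most one term survives, giving the claimed dichotomy. The only real subtlety is this tree-theoretic uniqueness of $\teta_\beta$, but it follows at once from the standard fact that Dynkin diagrams contain no cycles.
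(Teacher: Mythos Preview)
Your argument is correct and follows essentially the same route as the paper: write \(\omega_\teta=\prii(\omega_\teta)+\bar\omega_\teta\), expand \(\bar\omega_\teta=\sum_{\nu\in\thetaR}\bar C^{\teta\nu}\nu\), and pair with \(\beta^\vee\). Your additional remarks on block-diagonality of \(\bar C^{-1}\) and the tree structure of the Dynkin diagram make explicit the uniqueness of \(\teta_\beta\) that the paper leaves to the reader.
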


\begin{proof}
  We can write \(\omega_\teta\) as
  \(\prii(\omega_\teta) + \bar\omega_\teta
  = \textstyle\sum_{\beta\in\nthetaR} c_{\teta\beta}\omega_\beta + \textstyle\sum_{\teta'\in\theta}\bar C^{\teta\teta'}\teta'
  \).
  The result is obtained by applying \(\pairing{-}{\beta^\vee}\) to this equation and noting that \(C_{\teta'\beta}\) is non-zero only when
  \(\teta'\) is a neighbour of \(\beta\).
\end{proof}

\begin{cor}\label{dyn:cor:omega}\label{dyn:cor:omega}
  Let \(\thetaR_\teta\) denote the connected component of \(\thetaR\) containing \(\teta\). If we write the twist of \(\omega_\teta\) as
  \[
  \twist{\omega_\teta}=\sum_{\beta\in\nthetaR} m_\beta^\teta \omega_\beta,
  \]
  then the coefficients \(m_\beta^\teta\) are given by
  \[
  m_\beta^\teta = \begin{cases}
    (\bar C^{\teta\neighbour{\beta}}+\bar C^{\teta^\ddual\neighbour{\beta}})C_{\neighbour{\beta}\beta} & \text{ if \(\beta\) has a neighbour \(\neighbour{\beta}\) in \(\thetaR_\teta\) } \\
    0 & \text{ otherwise}
  \end{cases}
  \]
  The values of \(\bar C^{\teta\neighbour{\beta}}+\bar C^{\teta^\ddual\neighbour{\beta}}\) are displayed in \Cref{fig:twists}.
  The values \(C_{\neighbour{\beta}\beta}\) may be determined using \Cref{fig:Cartan}. (Of course, only the first five cases are relevant.)
  \qed
\end{cor}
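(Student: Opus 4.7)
The plan is to reduce this corollary directly to the preceding Lemma~\ref{dyn:lem:omega} by exploiting the explicit formula $\twist{\omega_\teta} = -\prii(\omega_\teta) - \prii(\omega_{\teta^\ddual})$ obtained by combining Corollary~\ref{dual-fundamental} with the lemma identifying $\omega_\teta^\#$ with $\omega_{\teta^\ddual}$. Expanding each projection in the basis $\{\omega_\beta \mid \beta\in\nthetaR\}$ via Lemma~\ref{dyn:lem:omega}, the coefficient of $\omega_\beta$ in $\twist{\omega_\teta}$ is simply
\[
m_\beta^\teta = -c_{\teta\beta} - c_{\teta^\ddual\beta},
\]
and the stated formula will follow by inspection once we verify that both coefficients are indexed by the same connected component of $\thetaR$.

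The small matter needing attention is therefore that the connected component $\thetaR_\teta$ coincides with $\thetaR_{\teta^\ddual}$, so that ``$\beta$ has a neighbour in $\thetaR_\teta$'' is equivalent to ``$\beta$ has a neighbour in $\thetaR_{\teta^\ddual}$''. This I will justify by recalling that $\teta^\ddual = -w_0.\teta$ with $w_0$ the longest element of $\Weyl[\thetaR]$. Since $\Weyl[\thetaR]$ factors as the product of the Weyl groups of the irreducible components of $\RSystem[\thetaR]$, the longest element $w_0$ preserves each component, and hence so does $\ddual$.

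With that remark in place, the two cases of the formula fall out directly: if $\beta$ has no neighbour in $\thetaR_\teta = \thetaR_{\teta^\ddual}$, then both $c_{\teta\beta}$ and $c_{\teta^\ddual\beta}$ vanish by Lemma~\ref{dyn:lem:omega}, giving $m_\beta^\teta = 0$; and if $\beta$ has a (necessarily unique) neighbour $\teta_\beta$ in $\thetaR_\teta$, then
\[
m_\beta^\teta = -(-\bar C^{\teta\teta_\beta}C_{\teta_\beta\beta}) - (-\bar C^{\teta^\ddual\teta_\beta}C_{\teta_\beta\beta}) = (\bar C^{\teta\teta_\beta} + \bar C^{\teta^\ddual\teta_\beta})C_{\teta_\beta\beta},
\]
as claimed. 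The values of $\bar C^{\teta\teta_\beta} + \bar C^{\teta^\ddual\teta_\beta}$ needed to produce Figure~\ref{fig:twists} are then read off from the tabulated entries in Figure~\ref{fig:aij}, combined with the concrete description of the involution $\ddual$ on each irreducible component (trivial on types $B$, $C$, $D_{2k}$, $E_7$, $E_8$, $F_4$, $G_2$; the unique nontrivial diagram involution on types $A_l$, $D_{2k+1}$, $E_6$). Since the Lemma supplies all the bookkeeping and the only conceptual ingredient is the invariance of connected components under $\ddual$, there is no serious obstacle; the remainder of the work is purely tabular.
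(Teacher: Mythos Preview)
Your proposal is correct and follows exactly the route the paper intends: the corollary is marked \qed\ in the paper with no written proof, being an immediate consequence of combining \Cref{dual-fundamental} (which gives $\tau(\omega_\teta) = -\prii(\omega_\teta) - \prii(\omega_{\teta^\ddual})$) with \Cref{dyn:lem:omega}. Your explicit observation that $\thetaR_\teta = \thetaR_{\teta^\ddual}$, and your justification via the factorization of $w_0$ over irreducible components, fills in the one small detail the paper leaves to the reader.
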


\ifdraft{}{
\begin{figure}
  \begin{center}
\begin{tikzpicture}[dynPicture,auto]
  \dynTypeL{$A_l$}
  {[dynDiagram]
    \dynR{[right=of type][lbl=\dynweaktext{$\beta$}][weak]}
    {[dynTheta]
      \dynR[by weak line]{[lbl=$1$][marked] (1) }
      \dynR{[lbl=$1$] (a) }
      \dynR{[lbl=$1$]}
      \dynR{[lbl=$1$]}
      \dynR[by fadeout line]{[empty]} 
      \dynR[by fadein line]{[lbl=$1$] }
      \dynR{[lbl=$1$]}
      \dynR{[lbl=$1$] (z) }
      \dynR{[lbl=$1$]}
    }}
  \draw[dynduality] (a) to [bend left=160] (z);
  
  {[dynDiagram]
    {[dynTheta]
      \dynR{[below=\Platz of 1][lbl=$1$] }
      \dynR{[lbl=$2$][marked] (aa) }
      \dynR[with aa by weak line]{[above=of aa][weak]}
      \dynR[with aa by line]{[right=of aa][lbl=$2$]}
      \dynR{[lbl=$2$]}
      \dynR[by fadeout line]{[empty]} 
      \dynR[by fadein line]{[lbl=$2$] }
      \dynR{[lbl=$2$] }
      \dynR{[lbl=$2$] (zz) }
      \dynR{[lbl=$1$] (a2end) }
    }
    \node[right=of a2end]{${}_{(l\geq 3)}$};
  }
  \draw[dynduality] (aa) to [bend left=160] (zz);

  {[dynDiagram]
    {[dynTheta]
      \dynR{[below=2\Platz of 1][lbl=$1$] }
      \dynR{[lbl=$2$] (aaa) }
      \dynR{[lbl=$3$][marked] (3) }
      \dynR[with 3 by weak line]{[above=of 3][weak]}
      \dynR[with 3 by line]{[right=of 3][lbl=$3$]}
      \dynR[by fadeout line]{[empty]} 
      \dynR[by fadein line]{[lbl=$3$] }
      \dynR{[lbl=$3$] }
      \dynR{[lbl=$2$] (zzz) }
      \dynR{[lbl=$1$] (a3end) }
    }
    \node[right=of a3end]{${}_{(l\geq 5)}$};
  }
  \draw[dynduality] (aaa) to [bend left=160] (zzz);
\end{tikzpicture}
\end{center}

\begin{multicols}{2}
\begin{tikzpicture}[dynPicture]
  \dynTypeL{$B_l$}
  {[dynDiagram]
    \dynR{[right=of type][lbl=\dynweaktext{$\beta$}][weak]}
    {[dynTheta]
      \dynR[by weak line]{[lbl=$2$,marked]}
      \dynR{[lbl=$2$]}
      \dynR{[lbl=$2$]}
      \dynR[by fadeout line]{[empty] (x)} 
      \dynR[by fadein line]{[lbl=$2$] }
      \dynR[by right arrow]{[lbl=$1$] } 
    }}
  {[dynDiagram]
    {[dynTheta]
      \dynR{[lbl=$2$][below=of x] }
      \dynR{[lbl=$4$] (y) }
      \dynR[by right arrow]{[lbl=$3$][marked]}
    }
    \dynR[by weak line]{[weak]}
  }
  {[dynDiagram]
    {[dynTheta]
      \dynR{[lbl=$2$][below=of y]}
      \dynR[by right arrow]{[lbl=$2$][marked]}
    }
    \dynR[by weak line]{[weak]}
  }
\end{tikzpicture} 

\begin{tikzpicture}[dynPicture]
  \dynTypeL{$C_l$}
  {[dynDiagram]
    \dynR{[right=of type][lbl=\dynweaktext{$\beta$}][weak]}
    {[dynTheta]
      \dynR[by weak line]{[lbl=$2$,marked]}
      \dynR{[lbl=$2$]}
      \dynR{[lbl=$2$]}
      \dynR[by fadeout line]{[empty] (x) } 
      \dynR[by fadein line]{[lbl=$2$]}
      \dynR[by left arrow]{[lbl=$2$]} 
    }}
  {[dynDiagram]
    \dynR{[below=of x][weak] }
    {[dynTheta]
      \dynR[by weak line]{[lbl=$3$,marked]}
      \dynR[by right arrow]{[lbl=$2$]}
      \dynR{[lbl=$1$]}
    }}
\end{tikzpicture}

\begin{tikzpicture}[dynPicture]
  \dynTypeL{$E_6$}
  {[dynDiagram]
    {[dynTheta]
      \dynR{[right=of type][empty]}
      \dynR[by no line]{[lbl=$2$]}
      \dynR{[lbl=$3$] (a) }
      \dynR{[lbl=$4$](e6)}
      \dynR[with e6 by line]{[label={right:$2$}][above=of e6]}
      \dynR[with e6 by line]{[lbl=$3$][right=of e6] (z) }
      \dynR{[lbl=$2$,marked]}
    }
    \dynR[by weak line]{[label={right:\dynweaktext{$\beta$}}][weak]}
    \draw[dynduality] (a) to [bend left=120] (z);
  }
\end{tikzpicture}

\begin{tikzpicture}[dynPicture]
  \dynTypeL{$E_7$}
  {[dynDiagram]
    \dynR{[right=of type][empty]}
    {[dynTheta]
      \dynR[by no line]{[lbl=$2$]}
      \dynR{[lbl=$4$]}
      \dynR{[lbl=$6$](e6)}
      \dynR[with e6 by line]{[label={right:$3$}][above=of e6]}
      \dynR[with e6 by line]{[lbl=$5$][right=of e6]}
      \dynR{[lbl=$4$]}
      \dynR{[lbl=$3$,marked]}
    }
    \dynR[by weak line]{[label={right:\dynweaktext{$\beta$}}][weak]}
  }
\end{tikzpicture}

\begin{tikzpicture}[dynPicture]
  \dynTypeL{$D_l$}
  {[dynDiagram]
    \dynR{[weak][right=of type][lbl=\dynweaktext{$\beta$}] (0) }
    {[dynTheta]
      \dynR[by weak line]{[lbl=$2$,marked] (a) }
      \dynR{[lbl=$2$] }
      \dynR{[lbl=$2$] }
      \dynR[by fadeout line]{[empty]} 
      \dynR[by fadein line]{[lbl=$2\;\;$](v)}
      \dynR{[label={right:$1$}][above right=of v] (vv)}
      \dynR[with v]{[label={right:$1$}][below right=of v] (w)}
    }}
  \draw[dynduality,dashed] (vv) to [bend left=30]  node[auto] {${\;}_{(\text{if } l \text{ odd})}$} (w);

  {[dynDiagram]
    \dynR{[below=\Platz of 0][weak]}
    {[dynTheta]
      \dynR[by weak line]{[lbl=$3$,marked] (a)}
      \dynR{[lbl=$5$] (e6) }
      \dynR[with e6 by line]{[label={right:$3$}][above=of e6] (b) }
      \dynR[with e6 by line]{[lbl=$4$][right=of e6]}
      \dynR{[lbl=$3$]}
      \dynR{[lbl=$2$]}
      \dynR{[lbl=$1$]}
    }}
  \draw[dynduality] (a) to [bend left=30] (b);

  {[dynDiagram]
    \dynR{[below=2\Platz of 0][weak]}
    {[dynTheta]
      \dynR[by weak line]{[lbl=$3$,marked]}
      \dynR{[lbl=$4$] (e6) }
      \dynR[with e6 by line]{[label={right:$2$}][above=of e6]}
      \dynR[with e6 by line]{[lbl=$3$][right=of e6]}
      \dynR{[lbl=$2$]}
      \dynR{[lbl=$1$]}
    }}

  {[dynDiagram]
    \dynR{[below=3\Platz of 0][weak]}
    {[dynTheta]
      \dynR[by weak line]{[lbl=$2$,marked] (a) }
      \dynR{[lbl=$3$] (e6) }
      \dynR[with e6 by line]{[label={right:$2$}][above=of e6] (b)}
      \dynR[with e6 by line]{[lbl=$2$][right=of e6]}
      \dynR{[lbl=$1$]}
    }}
  \draw[dynduality] (a) to [bend left=30] (b);

  \node[below=3.8\Platz of 0]{};
\end{tikzpicture}
\end{multicols}
  \caption{The number underneath a root \ensuremath{\teta} indicates the value \ensuremath{\bar C^{\teta\neighbour{\beta}}+\bar C^{\teta^\ddual\neighbour{\beta}}}, where \(\neighbour{\beta}\) is the circled root in each diagram.
These values are easily computed from \Cref{fig:aij}.
The arrows indicate the involution \(\teta\mapsto\teta^\ddual\) on \(\thetaR\).
(Diagrams of types \(E_8\), \(F_4\) and \(G_2\) are not displayed as they cannot occur as proper subdiagrams.)
}
  \label{fig:twists}
\end{figure}
}

\begin{figure}
  \begin{multicols}{2}
    {
\renewcommand{\bar}{\overline}
\begin{tikzpicture}[dynPicture,auto]
  \dynTypeA{$A_{l,\; l \text{ odd}}$}
  {[dynDiagram]
    \dynR{[below=of type][weak][lbl=\dynweaktext{$\beta$}]}
    {[dynTheta]
      \dynR[by weak line]{[marked][lbl=\dynweaktext{$\teta_\beta$}]}
      \dynR{}
      \dynR[by fadeout line]{[empty]} 
      \dynR[by fadein line]{[lbl=$\bar 1$]}
      \dynR[by fadeout line]{[empty]} 
      \dynR[by fadein line]{}
      \dynR{}
    }}
\end{tikzpicture}
\begin{tikzpicture}[dynPicture]
  \dynTypeA{}
  {[dynDiagram]
     \dynR{[below=of type][empty]}
    {[dynTheta]
      \dynR[by no line]{ }
      \dynR{}
      \dynR{[marked] (3a) }
      \dynR[with 3a by weak line]{[above=of 3a][weak]}
      \dynR[with 3a by line]{[right=of 3a][lbl=$\bar 1$]}
      \dynR{} 
      \dynR{}
      \dynR{}
    }}
\end{tikzpicture}
\begin{tikzpicture}[dynPicture]
  \dynTypeA{}  
  {[dynDiagram]
    \dynR{[below=of type][empty]}
    {[dynTheta]
      \dynR[by no line]{ }
      \dynR{}
      \dynR{[marked][lbl=$\bar 1$] (3b) }
      \dynR[with 3b by weak line]{[above=of 3b][weak]}
      \dynR[with 3b by line]{[right=of 3b]}
      \dynR{}
    }}
\end{tikzpicture}

\begin{tikzpicture}[dynPicture]
  \dynTypeL{$B_l$}
  {[dynDiagram]
    \dynR{[right=of type][weak][lbl=\dynweaktext{$\beta$}]}
    {[dynTheta]
      \dynR[by weak line]{[marked][lbl=\dynweaktext{$\teta_\beta$}]}
      \dynR{(x)}
      \dynR{}
      \dynR[by fadeout line]{[empty]} 
      \dynR[by fadein line]{ }
      \dynR[by right arrow]{[lbl=$\bar 1$] } 
    }}
  {[dynDiagram]
    {[dynTheta]
      \dynR{[below=of x] }
      \dynR{ (y) }
      \dynR[by right arrow]{[lbl=$\bar 1$][marked]}
    }
    \dynR[by weak line]{[weak]}
  }
\end{tikzpicture}

\begin{tikzpicture}[dynPicture]
  \dynTypeL{$C_3$}
    {[dynDiagram]
    \dynR{[right=of type][weak][lbl=\dynweaktext{$\beta$}] }
    {[dynTheta]
      \dynR[by weak line]{[lbl=$\bar 1$][marked]}
      \dynR[by right arrow]{}
      \dynR{[lbl=$\bar 1$]}
    }}
\end{tikzpicture}

\begin{tikzpicture}[dynPicture]
  \dynTypeL{$E_7$}
  {[dynDiagram]
    {[dynTheta]
      \dynR{[right=of type]}
      \dynR{}
      \dynR{(e6)}
      \dynR[with e6 by line]{[label={right:$\bar 1$}][above=of e6]}
      \dynR[with e6 by line]{[lbl=$\bar 1$][right=of e6]}
      \dynR{ }
      \dynR{[lbl=$\bar 1$,marked]}
    }
    \dynR[by weak line]{[label={right:\dynweaktext{$\beta$}}][weak]}
  }
\end{tikzpicture}
\bigskip

\begin{tikzpicture}[dynPicture]
  \dynTypeA{$D_{l,\; l \text{ even}}$}
  {[dynDiagram]
    \dynR{[weak][below=of type][lbl=\dynweaktext{$\beta$}] (0) }
    {[dynTheta]
      \dynR[by weak line]{[marked][lbl=\dynweaktext{$\teta_\beta$}] (a) }
      \dynR{ }
      \dynR{ }
      \dynR[by fadeout line]{[empty]} 
      \dynR[by fadein line]{(v)}
      \dynR{[label={right:$\bar 1$}][above right=of v] (vv)}
      \dynR[with v]{[label={right:$\bar 1$}][below right=of v] (w)}
    }}
\end{tikzpicture}

\begin{tikzpicture}[dynPicture]
  \dynTypeA{$D_7$}
  {[dynDiagram]
    \dynR{[below=of type][weak]}
    {[dynTheta]
      \dynR[by weak line]{[marked] (a)}
      \dynR{[lbl=$\bar 1$] (e6) }
      \dynR[with e6 by line]{[above=of e6] (b) }
      \dynR[with e6 by line]{[right=of e6]}
      \dynR{[lbl=$\bar 1$]}
      \dynR{}
      \dynR{[lbl=$\bar 1$]}
    }}
\end{tikzpicture}\\

\begin{tikzpicture}[dynPicture]
  \dynTypeA{$D_6$}
  {[dynDiagram]
    \dynR{[below=of type][weak]}
    {[dynTheta]
      \dynR[by weak line]{[lbl=$\bar 1$,marked]}
      \dynR{ (e6) }
      \dynR[with e6 by line]{[above=of e6]}
      \dynR[with e6 by line]{[lbl=$\bar 1$][right=of e6]}
      \dynR{}
      \dynR{[lbl=$\bar 1$]}
    }}
\end{tikzpicture}\\

\begin{tikzpicture}[dynPicture]
  \dynTypeA{$D_5$}
  {[dynDiagram]
    \dynR{[below=of type][weak]}
    {[dynTheta]
      \dynR[by weak line]{[marked] (a) }
      \dynR{[lbl=$\bar 1$] (e6) }
      \dynR[with e6 by line]{[above=of e6] (b)}
      \dynR[with e6 by line]{[right=of e6]}
      \dynR{[lbl=$\bar 1$]}
    }}
\end{tikzpicture}
}

  \captionsetup{margin=0pt}
  \caption{A simplified version of \Cref{fig:twists}, indicating only for which self-dual roots \ensuremath{\teta} the value \mbox{\ensuremath{2\bar C^{\teta\neighbour{\beta}} = \bar C^{\teta\neighbour{\beta}}+\bar C^{\teta^\ddual\neighbour{\beta}}}} is odd.
  }
  \label{fig:twists-simplified}
  \end{multicols}
\end{figure}

\subsection{The marking scheme}
\Cref{dyn:cor:omega} gives an explicit description of the twist vectors  \mbox{\(\vec m^\teta = (m_\beta^\teta)_{\beta\in\nthetaR} \)} for each root \(\teta\in\thetaR\).  However, we see from \Cref{vanishing-for-tate} that only a small part of this information is relevant for determining whether the twisted Witt groups\slash Tate cohomology groups of \(G/P\) vanish. In particular:
\begin{enumerate}[(i)]
\item \label{e:r:self-dual}
  We only need to know those \(\vec m^\teta\) for which \(\teta\) is self-dual (\(\teta^\ddual=\teta\)).
\item \label{e:r:two}
  We only need their reductions \(\bar{\vec m}^\teta\) modulo two.
\item \label{e:r:track}
  We do not need to keep track which twist vector belongs to which root \(\teta\):  we only need to know which twist vectors \(\bar{\vec m}^\teta\) \emph{occur} for the collection of all self-dual roots of \(\thetaR\).
\end{enumerate}

\ifdraft{}{
\begin{wrapfigure}[13]{R}{3cm}
  \centering
  \begin{tikzpicture}[dynPicture,y=0.6cm,node distance=8mm]
{[dynDiagram,on grid]
\node[on chain] (0) {$\teta$};
\node[on chain] {$\beta$};
\node[on chain] {$C_{\teta\beta}$};
}
{[dynDiagram,on grid]
\dynR{at (0,-1) [theta]}
\dynR{}
\node[on chain]{-1};
}
{[dynDiagram,on grid]
\dynR{at (0,-2) [theta]}
\dynR[by left arrow]{}
\node[on chain]{-1};
}
{[dynDiagram,on grid]
\dynR{at (0,-3) [theta]}
\dynR[by triple left arrow]{}
\node[on chain]{-1};
}
{[dynDiagram,on grid]
\dynR{at (0,-4) [theta]}
\dynR[by right arrow]{}
\node[on chain]{-2};
}
{[dynDiagram,on grid]
\dynR{at (0,-5) [theta]}
\dynR[by triple right arrow]{}
\node[on chain]{-3};
}
{[dynDiagram,on grid]
\dynR{at (0,-6) [theta]}
\dynR[by no line]{}
\node[on chain]{~0};
}
{[dynDiagram,on grid]
\dynR{at (0,-7) [theta]}
\node[on chain]{${}_{(\teta=\beta)}$};
\node[on chain]{~2};
}
\end{tikzpicture}

  \captionsetup{justification=centering}
  \caption{The values \ensuremath{C_{\teta\beta}}}\label{fig:Cartan}
\end{wrapfigure}
}

Given (i) and (ii), we may simplify \Cref{fig:twists} to \Cref{fig:twists-simplified}.
By (iii),  we can summarize the relevant information by decorating the Dynkin diagram of \(G/P\): for each self-dual root \(\teta\) of \(\thetaR\), we add a mark connecting all those neighbours of \(\thetaR_\teta\) for which \(\bar m^\teta_\beta\) is non-zero. That is, if \(\bar{\vec m}^\teta\) has exactly one non-zero entry, we simply mark the corresponding neighbour in \(\simpleR\); if \(\bar{\vec m}^\teta\) has several non-zero entries, we add a mark that connects all the corresponding neighbours.

\Cref{dyn:cor:omega} implies that the marks of our Dynkin diagrams only depend on the types of the connected components of \(\thetaR\) and on the edges by which these components are connected to their neighbours in \(\simpleR\).
Thus, as claimed in \Cref{sec:marking-scheme}, it suffices to consider connected components of \(\thetaR\) in our marking scheme.
The precise rules are now easily derived, as we illustrate with a few examples.

First, consider the case when \(\thetaR\subset\simpleR\) is of the form \(A_l\subset A_n\), and suppose \(A_l\) has neighbours \(\beta_1\) and \(\beta_2\) on either side:
\[\ifdraft{}{
    \begin{tikzpicture}[dynPicture,auto]
      {[dynDiagram]
        \dynR{[empty]}
        \dynR[by fadein line]{}
        \dynR{[label={above:$\beta_1$}]}
        {[dynTheta]
          \dynR{[label={above:$\neighbour{\beta_1}$}] }
          \dynR{}
          \dynR[by fadeout line]{[empty]}
          \dynR[by fadein line]{[label={above:$\teta$}] }
          \dynR[by fadeout line]{[empty]}
          \dynR[by fadein line]{}
          \dynR{[label={above:$\neighbour{\beta_2}$}] }
        }
        \dynR{[label={above:$\beta_2$}]}
        \dynR{}
        \dynR[by fadeout line]{[empty]}
      }
    \end{tikzpicture}
  }
\]
If \(l\) is even, there are no self-dual roots in \(A_l\), so no roots get marked.
If \(l\) is odd, we have exactly one self-dual root \(\teta := \teta_{\nicefrac{(l+1)}{2}}\) in \(A_l\).
We see from \Cref{fig:twists-simplified} that
\(2\bar C^{\teta\neighbour{\beta_1}} = \bar C^{\teta\neighbour{\beta_1}}+\bar C^{\teta^\ddual\neighbour{\beta_1}}\) and
\(2\bar C^{\teta\neighbour{\beta_2}}\) are odd.
Moreover, since \(\beta_1\) and \(\beta_2\) are connected to \(A_l\) by a single line,
\(C_{\neighbour\beta_1\beta_1} = C_{\neighbour\beta_2\beta_2} = -1\).
Thus, both \( m^\teta_{\beta_1}\) and \( m^\teta_{\beta_2}\) are odd,
and we need to add a mark to our diagram connecting \(\beta_1\) and \(\beta_2\):
\[\ifdraft{}{
  \begin{tikzpicture}[dynPicture,auto]
    {[dynDiagram]
      \dynR{[empty]}
      \dynR[by fadein line]{}
      \dynR{(b1)}
      {[dynTheta]
        \dynR{}
        \dynR{}
        \dynR[by fadeout line]{[empty]}
        \dynR[by fadein line]{}
        \dynR[by fadeout line]{[empty]}
        \dynR[by fadein line]{}
        \dynR{}
      }
      \dynR{(b2)}
      \dynR{}
      \dynR[by fadeout line]{[empty]}
    }
    \dyndoubletwist{b1}{b2}
  \end{tikzpicture}
}\]

The same reasoning works for \(A_l\subset B_n\),
except that in this case it can happen that the neighbour \(\beta_2\) is connected to \(A_l\)
via a double line with an arrow pointing away from \(A_l\).
In that case, we find that \(C_{\neighbour\beta_2\beta_2} = -2\),
so \(m^\teta_{\beta_2}\) is even and only \(\beta_1\) gets marked.
Thus, the example from further above should be decorated with the following marks:
\begin{center}
  \ifdraft{}{
    \begin{tikzpicture}[dynPicture]
      {[dynDiagram]
        \dynR{ (b1)}
        {[dynTheta]
          \dynR{}
        }
        \dynR{ (b2)}
        \dynR{ (b) }
        {[dynTheta]
          \dynR{}
          \dynR{}
          \dynR{}
        }
        \dynR[by right arrow]{}
      }
      \dyndoubletwist{b1}{b2}
      \dyntwist{b}
    \end{tikzpicture}
  }
\end{center}

As another example, consider the case of \(B_l\subset B_n\). Let \(\beta\) be the unique neighbour of \(B_l\):
\[\ifdraft{}{
  \begin{tikzpicture}[dynPicture]
    {[dynDiagram]
      \dynR{[empty]}
      \dynR[by fadein line]{}
      \dynR{[label={above:$\beta$}]}
      {[dynTheta]
        \dynR{[label={above:$\neighbour\beta$}]}
        \dynR{}
        \dynR{}
        \dynR[by fadeout line]{[empty]}
        \dynR[by fadein line]{}
        \dynR[by right arrow]{[label={above:$\teta_l$}]}
      }}
  \end{tikzpicture}
}\]
All roots of \(B_l\) are self-dual. However, we see from \Cref{fig:twists-simplified} that \(2\bar C^{\teta\neighbour\beta}\) is odd only for the shortest root $\teta_l$ of \(B_l\). The value of \(C_{\neighbour\beta\beta}\) is once again \(-1\), so we find that \(m^{\teta_l}_\beta=2\bar C^{\teta_l\neighbour\beta}C_{\neighbour\beta\beta}\) is odd and \(\beta\) gets marked:
\[\ifdraft{}{
  \begin{tikzpicture}[dynPicture]
    {[dynDiagram]
      \dynR{[empty]}
      \dynR[by fadein line]{}
      \dynR{(b)}
      {[dynTheta]
        \dynR{}
        \dynR{}
        \dynR{}
        \dynR[by fadeout line]{[empty]}
        \dynR[by fadein line]{}
        \dynR[by right arrow]{}
      }}
    \dyntwist{b}
  \end{tikzpicture}
}\]

\begin{bibdiv}
\begin{biblist}
\bib{Adams:Lie}{book}{
  author={Adams, J. Frank},
  title={Lectures on Lie groups},
  publisher={W. A. Benjamin, Inc., New York-Amsterdam},
  date={1969},
  pages={xii+182},
}
\bib{Balmer:TWGI}{article}{
  author={Balmer, Paul},
  title={Triangular Witt groups. I. The 12-term localization exact sequence},
  journal={$K$-Theory},
  volume={19},
  date={2000},
  number={4},
  pages={311--363},
}
\bib{Balmer:TWGII}{article}{
  author={Balmer, Paul},
  title={Triangular Witt groups. II. From usual to derived},
  journal={Math. Z.},
  volume={236},
  date={2001},
  number={2},
  pages={351--382},
  issn={0025-5874},
}
\bib{BalmerCalmes}{article}{
  author={Balmer, Paul},
  author={Calm{\`e}s, Baptiste},
  title={Witt groups of Grassmann varieties},
  journal={J. Algebraic Geom.},
  volume={21},
  date={2012},
  number={4},
  pages={601--642},
}\bib{BalmerCalmes:lax}{article}{
   author={Balmer, Paul},
   author={Calm{\`e}s, Baptiste},
   title={Bases of total Witt groups and lax-similitude},
   journal={J. Algebra Appl.},
   volume={11},
   date={2012},
   number={3},
   pages={1250045, 24},
}
\bib{BalmerWalter:GWSS}{article}{
  author={Balmer, Paul},
  author={Walter, Charles},
  title={A Gersten-Witt spectral sequence for regular schemes},
  journal={Ann. Sci. \'Ecole Norm. Sup. (4)},
  volume={35},
  date={2002},
  number={1},
  pages={127--152},
}
\bib{BG:Koszul}{article}{
  author={Balmer, Paul},
  author={Gille, Stefan},
  title={Koszul complexes and symmetric forms over the punctured affine space},
  journal={Proc. London Math. Soc. (3)},
  volume={91},
  date={2005},
  number={2},
  pages={273--299},
}
\bib{Bourbaki:Lie456}{book}{
  author={Bourbaki, Nicolas},
  title={Lie groups and Lie algebras. Chapters 4--6},
  series={Elements of Mathematics (Berlin)},
  note={Translated from the 1968 French original by Andrew Pressley},
  publisher={Springer-Verlag},
  place={Berlin},
  date={2002},
}
\bib{Bousfield:1990}{article}{
   author={Bousfield, A. K.},
   title={A classification of $K$-local spectra},
   journal={J. Pure Appl. Algebra},
   volume={66},
   date={1990},
   number={2},
   pages={121--163},
}
\bib{Bousfield:2-primary}{article}{
  author={Bousfield, A. K.},
  title={On the 2-primary $v_1$-periodic homotopy groups of spaces},
  journal={Topology},
  volume={44},
  date={2005},
  number={2},
  pages={381--413},
  issn={0040-9383},
}
\bib{CalmesFasel}{article}{
  author={Calm{\`e}s, Baptiste},
  author={Fasel, Jean},
  title={Trivial Witt groups of flag varieties},
  journal={J. Pure Appl. Algebra},
  volume={216},
  date={2012},
  number={2},
  pages={404--406},
}
\bib{CalmesHornbostel:reductive}{article}{
  author={Calm{\`e}s, Baptiste},
  author={Hornbostel, Jens},
  title={Witt motives, transfers and reductive groups},
  date={2004},
  eprint={http://www.mathematik.uni-bielefeld.de/LAG/man/143.html}
}
\bib{CartanEilenberg}{book}{
   author={Cartan, Henri},
   author={Eilenberg, Samuel},
   title={Homological algebra},
   publisher={Princeton University Press},
   place={Princeton, N. J.},
   date={1956},
}
\bib{FultonHarris}{book}{
  author={Fulton, William},
  author={Harris, Joe},
  title={Representation theory},
  series={Graduate Texts in Mathematics},
  volume={129},
  note={A first course;
    Readings in Mathematics},
  publisher={Springer-Verlag},
  place={New York},
  date={1991},
}
\bib{GilleNenashev}{article}{
  author={Gille, Stefan},
  author={Nenashev, Alexander},
  title={Pairings in triangular Witt theory},
  journal={J. Algebra},
  volume={261},
  date={2003},
  number={2},
  pages={292--309},
}
\bib{Hodgkin}{article}{
  author={Hodgkin, Luke},
  title={The equivariant K\"unneth theorem in $K$-theory},
  conference={
    title={Topics in $K$-theory. Two independent contributions},
  },
  book={
    publisher={Springer},
    place={Berlin},
  },
  date={1975},
  pages={1--101. Lecture Notes in Math., Vol. 496},
}
\bib{Jantzen}{book}{
  author={Jantzen, Jens Carsten},
  title={Representations of algebraic groups},
  series={Mathematical Surveys and Monographs},
  volume={107},
  edition={2},
  publisher={American Mathematical Society},
  place={Providence, RI},
  date={2003},
}
\bib{Karoubi:fondamental}{article}{
   author={Karoubi, Max},
   title={Le th\'eor\`eme fondamental de la $K$-th\'eorie hermitienne},
   journal={Ann. of Math. (2)},
   volume={112},
   date={1980},
   number={2},
   pages={259--282},
}
\bib{Koeck}{article}{
  author={K{\"o}ck, Bernhard},
  title={Chow motif and higher Chow theory of $G/P$},
  journal={Manuscripta Math.},
  volume={70},
  date={1991},
  number={4},
  pages={363--372},
}
\bib{MerkurjevTignol}{article}{
  author={Merkurjev, A. S.},
  author={Tignol, J.-P.},
  title={The multipliers of similitudes and the Brauer group of homogeneous
    varieties},
  journal={J. Reine Angew. Math.},
  volume={461},
  date={1995},
  pages={13--47},
  issn={0075-4102},
}
\bib{Nenashev:Gysin}{article}{
  author={Nenashev, Alexander},
  title={Gysin maps in Balmer-Witt theory},
  journal={J. Pure Appl. Algebra},
  volume={211},
  date={2007},
  number={1},
  pages={203--221},
}
\bib{Panin:TwistedFlags}{article}{
  author={Panin, I. A.},
  title={On the algebraic $K$-theory of twisted flag varieties},
  journal={$K$-Theory},
  volume={8},
  date={1994},
  number={6},
  pages={541--585},
}
\bib{QSS}{article}{
  author={Quebbemann, Heinz-Georg},
  author={Scharlau, Winfried},
  author={Schulte,  Manfred},
  title={Quadratic and Hermitian forms in additive and abelian categories},
  journal={J. Algebra},
  volume={59},
  date={1979},
  number={2},
  pages={264--289},
}
\bib{Serre}{article}{
  author={Serre, Jean-Pierre},
  title={Groupes de Grothendieck des sch\'emas en groupes r\'eductifs d\'eploy\'es},
  journal={Inst. Hautes \'Etudes Sci. Publ. Math.},
  number={34},
  date={1968},
  pages={37--52},
}
\bib{SGA6}{book}{
  label={SGA6},
  title={Th\'eorie des intersections et th\'eor\`eme de Riemann-Roch},
  series={Lecture Notes in Mathematics, Vol. 225},
  note={S\'eminaire de G\'eom\'etrie Alg\'ebrique du Bois-Marie 1966--1967
    (SGA 6);
    Dirig\'e par P. Berthelot, A. Grothendieck et L. Illusie. Avec la
    collaboration de D. Ferrand, J. P. Jouanolou, O. Jussila, S. Kleiman, M.
    Raynaud et J. P. Serre},
  publisher={Springer-Verlag},
  place={Berlin},
  date={1971},
}
\bib{Steinberg:Pittie}{article}{
  author={Steinberg, Robert},
  title={On a theorem of Pittie},
  journal={Topology},
  volume={14},
  date={1975},
  pages={173--177},
}
\bib{Walter:TGW}{article}{
  author={Walter, Charles},
  title={\GrothendieckWitt groups of triangulated categories},
  eprint={www.math.uiuc.edu/K-theory/0643/},
  date={2003},
}
\bib{Walter:PB}{article}{
  author={Walter, Charles},
  title={\GrothendieckWitt groups of projective bundles},
  eprint={www.math.uiuc.edu/K-theory/0644/},
  date={2003},
}
\bib{Yagita:W-of-G}{article}{
  author={Yagita, Nobuaki},
  title={Witt groups of algebraic groups},
  eprint={www.mathematik.uni-bielefeld.de/LAG/man/430.html},
  date={2011},
}
\bib{Me:WCCV}{article}{
  author={Zibrowius, Marcus},
  title={Witt groups of complex cellular varieties},
  journal={Documenta Math.},
  number={16},
  date={2011},
  pages={465--511},
}
\bib{Me:KOFF}{article}{
  author={Zibrowius, Marcus},
  title={KO-Rings of Full Flag Varieties},
  note={To appear in Trans.\ Amer.\ Math.\ Soc.},
  eprint={arXiv:1208.1497},
  date={2012},
}
\end{biblist}
\end{bibdiv}
\end{document}